\newcommand{\newsection}[1]{\setcounter{equation}{0} \section{#1}}
\newcommand{\bea}{\begin{eqnarray}}
\newcommand{\eea}{\end{eqnarray}}
\newcommand{\dsp}{\displaystyle}
\newcommand{\clb}{\mathcal{B}}
\newcommand{\cle}{\mathcal{E}}
\newcommand{\clh}{\mathcal{H}}
\newcommand{\clk}{\mathcal{K}}
\newcommand{\clm}{\mathcal{M}}
\newcommand{\cln}{\mathcal{N}}
\newcommand{\clr}{\mathcal{R}}
\newcommand{\cls}{\mathcal{S}}
\newcommand{\clu}{\mathcal{U}}
\newcommand{\clw}{\mathcal{W}}
\newcommand{\z}{\bm{z}}
\newcommand{\w}{\bm{w}}
\newcommand{\D}{\mathbb{D}}
\newcommand{\raro}{\rightarrow}
\newcommand{\mo}{\mathop{\oplus}}
\def\textmatrix#1&#2\\#3&#4\\{\bigl({#1 \atop #3}\ {#2 \atop #4}\bigr)}
\def\dispmatrix#1&#2\\#3&#4\\{\left({#1 \atop #3}\ {#2 \atop #4}\right)}
\newcommand{\be}{\begin{equation}}
\newcommand{\ee}{\end{equation}}
\newcommand{\ben}{\begin{eqnarray*}}
	\newcommand{\een}{\end{eqnarray*}}
\newcommand{\NI}{\noindent}
\newcommand{\bi}{\begin{itemize}}
	\newcommand{\ei}{\end{itemize}}
\theoremstyle{definition}
\newtheorem*{theorem*}{Theorem}
\theoremstyle{plain}
\newtheorem{thm}{Theorem}[section]
\newtheorem{cor}[thm]{Corollary}
\newtheorem{lem}[thm]{Lemma}
\newtheorem{prop}[thm]{Proposition}
\theoremstyle{definition}
\newtheorem{defn}[thm]{Definition}
\newtheorem{rem}[thm]{Remark}
\newtheorem{ex}[thm]{Example}
\numberwithin{equation}{section}
\let\phi=\varphi
\begin{document}

\title[Wold decomposition for isometries with equal range]	
{Wold decomposition for isometries with equal range}

\author[Majee]{Satyabrata Majee}
\address{Indian Institute of Technology Roorkee, Department of Mathematics,
		Roorkee-247 667, Uttarakhand,  India}
\email{smajee@ma.iitr.ac.in}

\author[Maji]{Amit Maji}
\address{Indian Institute of Technology Roorkee, Department of Mathematics,
		Roorkee-247 667, Uttarakhand,  India}
\email{amit.maji@ma.iitr.ac.in, amit.iitm07@gmail.com}

\subjclass[2010]{47A45, 47A15, 47A13, 47A05}


\keywords{Isometries, Wold-von Neumann decomposition, Shift operators, Unitary operators, Invariant subspaces}

\begin{abstract}
Let $n \geq 2$, and let $V=(V_1,\dots, V_n)$ be an $n$-tuple of isometries acting on a Hilbert space $\clh$. We say that $V$ is an $n$-tuple of {\textit{isometries with equal range}} if $V_i^{m_i}V_j^{m_j}\clh = V_j^{m_j} V_i^{m_i}\clh $ and $V_i^{*m_i}V_j^{m_j} \clh= V_j^{m_j} V_i^{*m_i}\clh$ for $m_i,m_j \in \mathbb{Z}_+$, where $1 \leq i<j \leq n$.

\NI
We prove that each $n$-tuple of {\textit{isometries with equal range}} admits a unique {\textit{Wold decomposition}}. We obtain analytic models of the above class, and as a consequence, we show that the wandering data are complete unitary invariants for $n$-tuples of {\textit{isometries with equal range}}. Our results unify all prior findings on the decomposition for tuples of isometries in the existing literature.
\end{abstract}	
\maketitle

\newsection{Introduction}

One of the key problems in operator theory, operator algebras, and analytic function theory is the classification and representation of $n$-tuples ($n \geq 2$) of commuting isometries acting on Hilbert spaces. The foundation of this study is the complete and explicit structure theory of a single isometry. It says that each isometry is the direct sum of a unitary operator and copies of the unilateral shift, and we shall refer to this classical decomposition as the {\textit{Wold decomposition or Wold-von Neumann decomposition}} (see details in Section 2). The aim of this paper is to obtain {\it{Wold decomposition}} for a large class of $n$-tuples of isometries (not necessarily commuting or twisted) on Hilbert spaces.

{\textit{Wold decomposition}} plays a crucial role in many areas of operator algebras and operator theory, namely, dilation theory, invariant subspace theory, operator model \cite{NF-BOOK}; and also prediction theory \cite{{HL-Prediction-theory}}, time series analysis \cite{MW-prediction-theory}, stochastic process \cite{MP-Wold-decomposition} etc. Thus, it is a natural query to ask whether Wold's model can be extended to several variables. Nevertheless, this generalization is not simple, and for example, in the case of commuting isometries, the structure is quite complex and mainly unknown. There has been a lot of research in this area over the past few decades, and a variety of significant and intriguing findings have been made. The structure of an $n$-tuple ($n \geq 2$) of commuting isometries on a Hilbert space is still not completely known. Many researchers investigated Wold-type decomposition for a pair or family of commuting isometries, like, Suciu \cite{SUCIU-SEMIGROUPS} developed a structure theory for a semigroup of isometries. Later, S\l{}o\'{c}inski \cite{SLOCINSKI-WOLD} obtained a Wold-type decomposition for pairs of doubly commuting isometries from Suciu's decomposition of the semigroup of isometries and Popovici \cite{POPOVICI-WOLD TYPE} studied the Wold-type decomposition for pairs of commuting isometries. Recently, Jeu and Pinto \cite{JP-NON COMMUTING} established {\textit{Wold decomposition}} for $n$-tuple ($n \geq 2$) of doubly non-commuting isometries, that is, an $n$-tuple of isometries $(V_1, \dots, V_n)$ on $\clh$ satisfying the following conditions:
For all  $i,j$ with $1 \leq i,j \leq n$, $z_{ij} \in S^1:=\{z_{ij} \in \mathbb{C}: 
|z_{ij}|=1\}$ are given such that $z_{ji}=\overline{z}_{ij}$ for all $i \neq j$, 
and 
\[
V_iV_j={z}_{ij} V_j V_i  \hspace{1cm} \text{and} \hspace{1cm} V_i^*V_j=
 \overline{z}_{ij} V_j V_i^*
\]
for all $i \neq j$. An $n$-tuple of isometries $(V_1, \dots, V_n)$ on $\clh$
becomes doubly commuting if $z_{ij}=1$ for all $i, j$ and a $\clu_n$-twisted isometry if $z_{ij}$'s are replaced by unitaries $U_{ij}$, where each $V_k$ is in the commutant of $U_{ij}$ for $i \neq j $ and $k =1, \ldots, n$. 
Inspiring from the {\textit{Wold-von Neumann decomposition}} for a single isometry, Rakshit, Sarkar, and Suryawanshi \cite{RSS-TWISTED ISOMETRIES} formulated the following definition for $n$-tuples of isometries:

\begin{defn}\label{Orthogonal-def}
Let $V=(V_1,  \dots, V_n)$ be any $n$-tuple of isometries on a Hilbert space
$\clh$. Then we call that the $n$-tuple $V$ admits {\textit{Wold-von Neumann decomposition}} (or {\it{Wold decomposition}} in short)
if there exist $2^n$-closed subspaces $\{\clh_{\Lambda}\}_{\Lambda \subseteq I_n}$ of 
$\clh$ (including trivial subspaces) such that 
\begin{itemize}
\item [(a)] $\clh_{\Lambda}$ reduces each isometry $V_i$ for $i = 
1,2, \dots, n$ and $\Lambda \subseteq I_n = \{1,2,\dots, n\}$,
\item [(b)] $\clh= \bigoplus\limits_{\Lambda \subseteq I_n } 
\clh_{\Lambda}$,
\item [(c)] for each $\Lambda \subseteq I_n$, $V_i|_{\clh_{\Lambda}}$ 
is a shift for $i \in \Lambda$, and $V_j|_{\clh_{\Lambda}}$ is unitary for 
$j \in I_n \setminus \Lambda$.
\end{itemize}  	 
\end{defn}
\NI
The authors showed that each $\clu_n$-twisted isometry admits {\it{Wold decomposition}} in the above sense, and also obtained analytic models of $\clu_n$-twisted isometries. 
\vspace{0.1cm}

Let $n \geq 2$ and $V=(V_1,\dots, V_n)$ be an $n$-tuple of isometries on $\clh$ such that 
$V_i^{m_i}V_j^{m_j}\clh = V_j^{m_j} V_i^{m_i}\clh $ and $V_i^{*m_i}V_j^{m_j} \clh= V_j^{m_j} V_i^{*m_i}\clh$ for $m_i,m_j \in \mathbb{Z}_+$, where $1 \leq i<j \leq n$.
We shall refer $V=(V_1,\dots, V_n)$ as an $n$-tuple of {\textit{isometries with equal range}}. Clearly, $n$-tuples $(n \geq 2)$ of doubly non-commuting isometries and $\clu_n$-twisted isometries are $n$-tuples of {\it{isometries with equal range}}. One of the central results of this paper is that each $n$-tuple of {\it{isometries with equal range}} admits {\textit{Wold-von Neumann decomposition}}. Indeed, we have the following result (see Section 5 below):

\begin{theorem*}\label{Theorem Main Result}
Let $V=(V_1,\dots, V_n)$ be an $n$-tuple ($n \geq 2$) of isometries on $\clh$ such that $V_i^{m_i}V_j^{m_j}\clh = V_j^{m_j} V_i^{m_i}\clh $ and $V_i^{*m_i}V_j^{m_j} \clh= V_j^{m_j} V_i^{*m_i}\clh$ for  $m_i,m_j \in \mathbb{Z}_+$, where $1 \leq i<j \leq n$. Then there exist $2^n$ joint $V$-reducing subspaces $\{\clh_{\Lambda}: \Lambda \subseteq I_n \}$ (counting the trivial subspace $\{0\}$) such that 
\begin{align*}
\clh= \bigoplus_{\Lambda \subseteq I_n} \clh_{\Lambda},
\end{align*}
where, for  $\clh_{\Lambda} \neq \{0\}$, $V_j|_{\clh_{\Lambda}}$ is unitary if 
$j \in I_n \backslash \Lambda$ and $V_i|_{\clh_{\Lambda}}$ is a shift if $i \in \Lambda$. Moreover, for each 
$ \Lambda \subseteq I_n $, 
\begin{align}\label{equn 4}
\clh_{\Lambda}= \bigoplus_{{\bm p} \in \mathbb{Z}_{+}^{|\Lambda|}} V_{{ \Lambda}}^{\bm p} 
\bigg( \bigcap_{{\bm q} \in \mathbb{Z}_{+}^{{n - |\Lambda|}}}V^{\bm q}_{ I_n \backslash \Lambda} \ \clw_{ \Lambda}  \bigg),
\end{align}	
and the above decomposition is unique.
\end{theorem*}

It is important to note that our results, restricted to the tuples of doubly commuting or $\clu_n$-twisted isometries, recover the classification results. Also, orthogonal decomposition for the non-commuting isometries will be fruitful as applications to the $C^{*}$-algebra context, see \cite{JPS-q COMMUTING ISOMETRIES}, \cite{Weber-algebras}.

The outline of this article is organized as follows: In Section 2, we discuss some basic definitions and the classical {\it{Wold-von Neumann decomposition}} for a single isometry. We also supplement some fundamental results that, although simple, are essential to determining the {\it{Wold decomposition}} for tuples of isometries. 
In Section 3, we present some important auxiliary results and the general structure of $n$-tuples of isometries. Section 4 is concerned with the {\it{Wold decomposition}} for pairs of {\textit{isometries with equal range}}. Section 5 is devoted to the {\it{Wold decomposition}} for $n$-tuple of {\textit{isometries with equal range}}. A concrete analytic model for an $n$-tuple of {\textit{isometries with equal range}} has been presented in Section 6. Finally, in Section 7 we obtain that the wandering data are complete unitary invariants for $n$-tuples of {\textit{isometries with equal range}}. Our findings are more broadly applicable as they cover a large class of tuples of isometries, and on the other hand, our method is new and notably distinct from earlier research.

\section{Notations and Preparatory results}

In this section, we recall some basic definitions and some fundamental results that will be used frequently in the sequel.

Throughout this article, $\mathbb{Z}_{+}$ denotes the set of all non-negative integers, $\clh$ denotes a separable Hilbert space over complex field, $\mathcal{B}(\clh)$ is the $C^*$-algebra of all bounded linear operators (operators in short) on $\clh$, and $I_{\clh}$ stands for an identity operator on $\clh$. A subspace $\mathcal{M}$ of $\mathcal{H}$ is invariant under 
$V \in \clb(\clh)$ if $V(\mathcal{M}) \subseteq \mathcal{M}$ and subspace 
$\mathcal{M}$ reduces $V$ if $V(\mathcal{M}) \subseteq \mathcal{M}$ and 
$ V(\mathcal{M}^{\perp}) \subseteq \mathcal{M}^{\perp}$. We denote $\cln(V)$ 
and $\clr(V)$ as the kernel of $V$ and range of $V$, respectively. We shall frequently use the basic properties of an isometry $V$ (that is, $V^{*}V =I_{\clh}$) on $\clh$ namely, $V^*\clh = \clh$ and $\clr(V) = \cln(V^*)^{\perp}$. A \textit{wandering subspace} of an isometry $V$ is a closed subspace $\mathcal{W} \subseteq \mathcal{H}$ such that 
\[
V^{k} \mathcal{W} \perp V^{l} \mathcal{W}  ~~ \ \text{for all}
 \ \  {k , l} \ \in \   \mathbb{Z}_{+} \ \text{with} \ k \neq l. 
\]
An isometry $V$ on $\mathcal{H}$ is said to be a {\textit {unilateral shift 
or shift}} if $ \mathcal{H}= \bigoplus_{m \geq 0} V^m \mathcal{W}$ for 
some wandering subspace $\mathcal{W}$ of $V$; equivalently, an isometry 
$V$ is called a shift if $V^{*m} \rightarrow 0$ as $m \rightarrow \infty$ 
in the strong operator topology (see Halmos \cite{HALMOS-BOOK}). If $\clw$ 
is a \textit{wandering subspace} of a shift $V$ on $\clh$, then
\[
\clw = \clh \ominus V \clh
\]
and the multiplicity of the shift $V$ is defined as the dimension of $\clw$
(see \cite{NF-BOOK}).
\vspace{.2cm}

One of the fundamental results in classification theory is the \textit{Wold-von Neumann decomposition} of an isometry on a Hilbert space due to H. Wold \cite{WOLD-TIME SERIES} and J. von-Neumann \cite{von Neumann}.

\begin{thm}\label{thm-Wold}
Let $V$ be an isometry acting on a Hilbert space $\clh$ and $\clw = \clh \ominus V \clh$.
Then $\clh$ decomposes uniquely as a direct sum of two $V$-reducing subspaces
$\clh_s = \displaystyle{\mo_{m=0}^\infty} V^m \clw$ and
$\clh_u = \clh \ominus \clh_s = \dsp \cap_{m=0}^{\infty} V^m \clh$ and
\[
V = \begin{bmatrix} V_s & O\\ O & V_u
\end{bmatrix} \in \clb(\clh_s \oplus \clh_u),
\]
where $V_s = V|_{\clh_s}$ is a shift and $V_u = V|_{\clh_u}$ is unitary.
\end{thm}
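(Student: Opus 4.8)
The plan is to construct the shift part explicitly from the wandering subspace $\clw = \clh \ominus V\clh$ and to identify the unitary part as the residual space $\bigcap_{m} V^m \clh$. First I would record the basic orthogonality: since $\clw \perp V\clh$ and $V^k \clw \subseteq V\clh$ for every $k \geq 1$, applying the isometry $V^m$ preserves inner products and yields $V^m \clw \perp V^{m+k}\clw$. Hence $\{V^m \clw\}_{m \geq 0}$ is a family of mutually orthogonal closed subspaces, and
\[
\clh_s := \bigoplus_{m=0}^{\infty} V^m \clw
\]
is a well-defined closed subspace of $\clh$.

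Next I would establish the finite decomposition
\[
\clh = \Big(\bigoplus_{m=0}^{N-1} V^m \clw\Big) \oplus V^N \clh \qquad (N \geq 1)
\]
by induction on $N$. The case $N=1$ is precisely $\clh = \clw \oplus V\clh$, using that the range of an isometry is closed; for the inductive step I would apply $V$ to the relation at stage $N$ (which $V$ sends to an orthogonal decomposition of $V\clh$) and substitute back into $\clh = \clw \oplus V\clh$. Letting $N \to \infty$, the finite direct sums increase to $\clh_s$ while the complements $V^N \clh$ decrease, so the orthogonal complement of $\clh_s$ is exactly $\clh_u := \clh \ominus \clh_s = \bigcap_{m=0}^{\infty} V^m \clh$, giving the asserted formula for the unitary summand.

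I would then verify that both summands reduce $V$ and have the claimed type. Invariance $V\clh_s \subseteq \clh_s$ is immediate, and $\clh_s$ is $V^*$-invariant because $V^*(V^m \clw) = V^{m-1}\clw$ for $m \geq 1$, while $V^* \clw = \{0\}$ since $\langle V^* w, h\rangle = \langle w, Vh\rangle = 0$ for $w \in \clw$; thus $\clh_s$, and therefore $\clh_u = \clh_s^{\perp}$, reduces $V$. By construction $V|_{\clh_s}$ is a shift with wandering subspace $\clw$. For $V|_{\clh_u}$, which is automatically an isometry, I must check surjectivity: for $x \in \clh_u \subseteq V\clh$ one has $VV^* x = x$, and writing $x = V^{N+1}z$ shows $V^* x = V^N z \in V^N \clh$ for every $N$, so $V^* x \in \clh_u$ and $x = V(V^* x) \in V\clh_u$. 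Hence $V\clh_u = \clh_u$ and $V|_{\clh_u}$ is unitary.

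Finally, for uniqueness, suppose $\clh = \clk_s \oplus \clk_u$ is a second reducing decomposition with $V|_{\clk_s}$ a shift and $V|_{\clk_u}$ unitary. I would play the strong convergence $V^{*m} \to 0$ on any shift part against norm preservation on any unitary part. For $x \in \clh_u$, decomposing $x = x_s + x_u$ along $\clk_s \oplus \clk_u$ and using that these are reducing gives $\|V^{*m}x\|^2 = \|V^{*m}x_s\|^2 + \|x_u\|^2$; the left side equals $\|x\|^2$ by unitarity on $\clh_u$, while $V^{*m}x_s \to 0$, forcing $x_s = 0$, so $\clh_u \subseteq \clk_u$. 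The symmetric argument (now using that $V|_{\clh_s}$ is a shift) gives $\clk_u \subseteq \clh_u$, whence the two decompositions coincide. I expect the only genuinely delicate point to be the surjectivity of $V|_{\clh_u}$ together with the fact that the residual space is simultaneously $\clh_s^{\perp}$, the range intersection, and $V$-reducing; the rest is orthogonality bookkeeping and the standard shift-versus-unitary dichotomy.
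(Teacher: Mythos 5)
The paper does not actually prove this statement: Theorem~\ref{thm-Wold} is quoted as the classical Wold--von Neumann theorem, with citations to Wold and von Neumann, and the authors immediately move on to the preparatory lemmas that use it. So there is no in-paper proof to compare against; what can be said is that your argument is correct and complete, and it is the standard textbook proof. Each step checks out: the mutual orthogonality of $\{V^m\clw\}_{m\geq 0}$; the finite decomposition $\clh = \big(\bigoplus_{m=0}^{N-1}V^m\clw\big)\oplus V^N\clh$ by induction (using that an isometry has closed range); the identification $\clh_s^{\perp}=\bigcap_{m=0}^{\infty}V^m\clh$ obtained by letting $N\to\infty$; the reduction of $\clh_s$ via $V^*\clw=\{0\}$ and $V^*(V^m\clw)=V^{m-1}\clw$; the surjectivity of $V|_{\clh_u}$ via $VV^*x=x$ for $x\in\clr(V)$ together with $V^*x\in V^N\clh$ for every $N$; and the uniqueness argument that plays the strong convergence $V^{*m}\to 0$ on any shift summand against norm preservation on any unitary summand. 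Note that your uniqueness step relies on the equivalence between the wandering-subspace definition of a shift and the characterization $V^{*m}\to 0$ in the strong operator topology; this equivalence is exactly what the paper records (with a reference to Halmos) in Section~2, so your proof stays entirely within the toolkit the paper itself sets up.
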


Before moving on to the main context, we need the following simple and fundamental results:

\begin{lem}\label{Equal-Range-Isometry}
Let $(V,W)$ be a pair of isometries on $\clh$ such that $\clr(V)=\clr(W)$. Then $VV^*=WW^*$. 
\end{lem}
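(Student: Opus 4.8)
The plan is to recognize that for an isometry the operator $VV^*$ is exactly the orthogonal projection onto its range, and then to invoke uniqueness of orthogonal projections onto a fixed closed subspace. Since the hypothesis equates the ranges, the two projections must be identical.

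First I would record the standard fact that $P_V := VV^*$ is an orthogonal projection. Using $V^*V = I_{\clh}$, one checks $P_V^* = (VV^*)^* = VV^* = P_V$ and $P_V^2 = V(V^*V)V^* = VV^* = P_V$, so $P_V$ is self-adjoint and idempotent, hence an orthogonal projection. Next I would identify its range. On the one hand $\clr(P_V) \subseteq \clr(V)$ trivially; on the other hand, for any $x \in \clh$ we have $P_V(Vx) = V(V^*V)x = Vx$, so $P_V$ fixes every vector of $\clr(V)$. Therefore $\clr(P_V) = \clr(V)$ and $P_V$ is precisely the orthogonal projection onto $\clr(V)$. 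Here I use that $\clr(V)$ is closed, which is automatic for an isometry since $\|Vx\| = \|x\|$ gives $\clr(V) = \cln(V^*)^{\perp}$, as already noted in the preliminaries.

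By the identical argument, $WW^*$ is the orthogonal projection onto $\clr(W)$. Since the orthogonal projection onto a given closed subspace of $\clh$ is uniquely determined by that subspace, and the hypothesis provides $\clr(V) = \clr(W)$, the two projections coincide, i.e.\ $VV^* = WW^*$, which is the desired conclusion.

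There is essentially no hard step here; the only point requiring a little care is confirming that $VV^*$ not merely maps into $\clr(V)$ but \emph{fixes all} of $\clr(V)$, rather than a proper subspace. The computation $P_V(Vx) = Vx$ settles this immediately, and the whole argument reduces to the elementary observation that an isometry's range projection is recovered from the isometry as $VV^*$.
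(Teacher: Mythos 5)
Your proof is correct and follows essentially the same approach as the paper: both arguments identify $VV^*$ as the orthogonal projection determined by $\clr(V)$ and then invoke uniqueness of that projection given the hypothesis $\clr(V)=\clr(W)$. The only cosmetic difference is that the paper works with the complementary projection, writing $I_{\clh}-VV^*=P_{\cln(V^*)}$ and using $\cln(V^*)=\cln(W^*)$, whereas you verify directly that $VV^*$ is the projection onto $\clr(V)$; these are the same argument.
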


\begin{proof}
Suppose that $(V,W)$ is a pair of isometries on $\clh$ such that 
$\clr(V)=\clr(W)$. Now $ [\clr (V)]^\perp=[\clr(W)]^\perp$ implies 
$\cln(V^*)= \cln(W^*)$. Also $ \clr(I_{\clh}-VV^*)= \cln (V^*)$ gives 
$I_{\clh}-VV^*=P_{\cln(V^*)}$. Therefore,
$VV^*= I_{\clh} -P_{\cln (V^*)}= I_{\clh} - P_{\cln (W^*)}= WW^*.$
\end{proof}

\begin{lem}\label{Lemma-Unitary-equal range}
Let $(V,W )$ be a pair of isometries on $\clh$ such that $\clr(V)=\clr(W)$. Then 
there exists a unique unitary operator $U \in \clb(\clh)$ such that $V=WU$.
\end{lem}

\begin{proof}
Suppose $(V,W )$ is a pair of isometries on $\clh$ such that  $V\clh = W\clh$. 
Then for each $x \in \clh$, there exists a unique $y \in \clh$ such that $Vx=Wy$. 
Now define a map $U:\clh \rightarrow \clh$ as $Ux= y$. Clearly, $U$ is well-defined and linear on $\clh$. Thus 
\[
Vx = WUx \quad ( x \in \clh ).
\] 
Since $V$ and $W$ are isometries on $\clh$, for each $x \in \clh$ 
\[
\|U x\| = \| WUx \|  = \|Vx\| =\|x\|.
\] 
Therefore, $U$ is an isometry and it is also easy to see that $U$ is onto. Thus $U$ 
is a unitary operator on $\clh$ such that $Vx = WUx$ for all $x \in \clh$. Hence $V=WU$.

Suppose that there exist two unitaries $U_1 , U_2 \in \clb(\clh)$ such that $V=WU_1$ and $V=WU_2$. Then $U_1 = W^*V = U_2$. This completes the proof. 
\end{proof}

\begin{lem}\label{Reducing-subspace-with-equal-range}
Let $(V, W)$ be a pair of isometries acting on $\clh$ such that $\clr(V^m W^n) =\clr(W^nV^m)$ for all $m,n \in \mathbb{Z}_+$. If $\clm \subseteq \clh$ reduces both the isometries $V$ and $W$, then $V^m W^n\clm =W^nV^m \clm$ for all $m,n \in \mathbb{Z}_+$.
\end{lem}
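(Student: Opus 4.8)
The plan is to translate the equal-range hypothesis into an identity between range projections, and then to exploit that a reducing subspace commutes with the projection onto the common range. Fix $m, n \in \mathbb{Z}_+$ and set $A = V^m W^n$ and $B = W^n V^m$. Both $A$ and $B$ are isometries, being products of isometries, and the hypothesis $\clr(V^m W^n) = \clr(W^n V^m)$ says precisely that $\clr(A) = \clr(B)$. Applying Lemma \ref{Equal-Range-Isometry} to the pair $(A, B)$ immediately yields the operator identity
\[
A A^* = B B^*.
\]

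Next I would record what it means for $\clm$ to reduce both $V$ and $W$. Writing $P$ for the orthogonal projection onto $\clm$, the reducing hypothesis is equivalent to $PV = VP$ and $PW = WP$; taking adjoints gives $PV^* = V^* P$ and $PW^* = W^* P$ as well. Consequently $P$ commutes with every word in $V, W, V^*, W^*$, in particular with $A = V^m W^n$, with $B = W^n V^m$, and with their adjoints $A^*$ and $B^*$.

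The heart of the argument is to identify the orthogonal projection onto the image $A\clm$. Since $A$ is an isometry, it is bounded below, so $A\clm$ is closed and $AP$ is a partial isometry: indeed $(AP)^*(AP) = P A^* A P = P$, so its initial space is $\clm$, whence $(AP)(AP)^* = A P A^*$ is the orthogonal projection onto its final space $A\clm$. Using that $P$ commutes with $A^*$, this simplifies to $A P A^* = A A^* P$, and the same computation gives $B P B^* = B B^* P$ for the projection onto $B\clm$. Invoking $A A^* = B B^*$ from the first step, the two projections coincide:
\[
A P A^* = A A^* P = B B^* P = B P B^*,
\]
and therefore $V^m W^n \clm = A\clm = B\clm = W^n V^m \clm$, as desired.

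The only delicate point is the middle step, namely asserting that $A P A^*$ is exactly the orthogonal projection onto $A\clm$. This relies on $A$ being an isometry, so that $A\clm$ is genuinely closed and $AP$ is a partial isometry with initial space $\clm$, together with the commutation $P A^* = A^* P$ needed to collapse $A P A^*$ to $A A^* P$. Once this is in hand, the equal-range condition does all the remaining work through Lemma \ref{Equal-Range-Isometry}, and the argument handles the degenerate cases $m = 0$ or $n = 0$ uniformly without separate treatment.
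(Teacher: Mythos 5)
Your proof is correct and follows essentially the same route as the paper: both apply Lemma \ref{Equal-Range-Isometry} to $A=V^mW^n$ and $B=W^nV^m$ to get $AA^*=BB^*$, and then use the fact that the projection onto $\clm$ commutes with $A,B,A^*,B^*$ to transfer this identity to $\clm$. The only difference is presentational: you identify $APA^*=AA^*P$ as the orthogonal projection onto $A\clm$, while the paper manipulates the subspaces directly via $V^mW^n\clm=V^mW^nW^{*n}V^{*m}\clm=W^nV^mV^{*m}W^{*n}\clm=W^nV^m\clm$.
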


\begin{proof}
Assume that $\clm \subseteq \clh$ reduces both the isometries $V$ and $W$. Then 
$V|_{\clm}$ and $W|_{\clm}$ are isometries on $\clm$. Since $\clr(V^m W^n)=\clr(W^nV^m)$, using Lemma \ref{Equal-Range-Isometry} we obtain 
\[
V^m W^n W^{*n}V^{*m} =W^n V^m V^{*m}W^{*n} ~~\mbox{for all}~~ m,n \in \mathbb{Z}_+.
\]
Therefore, 
\[
V^mW^n \clm= V^mW^nW^{*n}V^{*m}\clm= W^nV^mV^{*m}W^{*n}\clm= W^nV^m \clm
\]
for all $m,n \in \mathbb{Z}_+$.
\end{proof}

\begin{lem}\label{Invariant-subspace-with-equal-range}
Let $(V, W)$ be a pair of isometries on $\clh$ with $\clr( V^m W) = \clr(W V^m )$ for all $m \in \mathbb{Z}_+$. If $\clh = \clh_u \oplus \clh_s$ is the {\textit{Wold decomposition}} for the isometry $V$, then $\clh_u$ is invariant under $W$.
\end{lem}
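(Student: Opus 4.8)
The plan is to combine the explicit description of the unitary part coming from the Wold decomposition of $V$ (Theorem~\ref{thm-Wold}), namely
\[
\clh_u = \bigcap_{m=0}^{\infty} V^m \clh,
\]
with the range hypothesis $\clr(V^m W)=\clr(WV^m)$, the idea being that the equal-range condition is exactly what is needed to move $W$ from the right of $V^m$ to its left without leaving $\clr(V^m)$. So the whole argument will reduce to checking that $Wx$ stays in $V^m\clh$ for every $m$ whenever $x\in\clh_u$.

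First I would fix $x \in \clh_u$ and, for each $m \in \mathbb{Z}_+$, exploit that $x \in \clr(V^m)=V^m\clh$. Since $V^m V^{*m}$ is the orthogonal projection onto $\clr(V^m)$ and $x$ lies in that range, I can factor $x$ through $V^m$ as $x = V^m\bigl(V^{*m}x\bigr)$, with $V^{*m}x \in \clh$. Applying $W$ gives $Wx = W V^m\bigl(V^{*m}x\bigr) \in WV^m\clh = \clr(WV^m)$.

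Next I would invoke the hypothesis to rewrite $\clr(WV^m)=\clr(V^m W)=V^m W\clh$. Because $W$ is an operator on $\clh$, we trivially have $W\clh\subseteq\clh$, and hence $V^m W\clh\subseteq V^m\clh=\clr(V^m)$. Thus $Wx \in \clr(V^m)$ for this $m$. Since $m\in\mathbb{Z}_+$ was arbitrary, it follows that
\[
Wx \in \bigcap_{m=0}^{\infty} V^m\clh = \clh_u,
\]
so $W\clh_u\subseteq\clh_u$, i.e. $\clh_u$ is invariant under $W$, as claimed.

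The argument is essentially immediate once the range identity is read in the correct order: one uses $\clr(WV^m)=WV^m\clh$ and $\clr(V^mW)=V^m W\clh$, and the elementary inclusion $V^m W\clh\subseteq V^m\clh$ does the rest, so that Lemma~\ref{Equal-Range-Isometry} is not even needed here (although one could alternatively record the projection identity $V^m WW^*V^{*m}=WV^mV^{*m}W^*$ from that lemma and argue with projections). The only, rather mild, obstacle is the strategic choice to factor $x$ through $V^m$ and then transport $W$ across $V^m$ via the range equality, rather than trying to manipulate the projections $VV^*$ and $WW^*$ directly.
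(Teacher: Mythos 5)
Your proof is correct and follows essentially the same route as the paper: both rest on the identification $\clh_u=\bigcap_{m=0}^{\infty}V^m\clh$ together with the hypothesis $WV^m\clh=V^mW\clh\subseteq V^m\clh$, the paper arguing at the level of the subspaces directly while you verify the same chain elementwise via $x=V^mV^{*m}x$. The difference is purely presentational, so there is nothing to fix.
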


\begin{proof}
Suppose $\mathcal{H}=\mathcal{H}_{u} \oplus \mathcal{H}_{s}$
is the {\textit{Wold decomposition}} for the isometry $V$, where  $\clh_u= 
\dsp \mathop\cap_{m=0}^{\infty} V^m \clh$. Since $W$ is an isometry 
on $\clh$ and $V^mW \clh =WV^m \clh$ for $m \in \mathbb{Z}_+$, we have
\[
W\clh_{u}= W \dsp \big( \mathop\cap_{m=0}^{\infty} V^m \clh \big)
= \dsp \mathop\cap_{m=0}^{\infty} WV^m \clh= \dsp \mathop\cap_{m=0}^{\infty} 
V^mW \clh \subseteq \dsp \mathop\cap_{m=0}^{\infty} V^m \clh
=\clh_u.
\]
It follows that $\clh_u$ is $W$-invariant subspace. 	
\end{proof}

The following is an immediate result of the aforementioned lemma:

\begin{cor}\label{Reducing-subspace-corollary}
Suppose $(V,W)$ is a pair of isometries acting on $\clh$ such that $\clr(WV^m)=\clr( V^mW)$ and $\clr(W^*V^m)=\clr( V^m W^*)$ for all $m \in \mathbb{Z}_+$. If $\clh=\clh_u \oplus \clh_s$ is the {\textit{Wold decomposition}} for $V$, then both the subspaces 
$\clh_u$ and $\clh_s$ reduce $W$.
\end{cor}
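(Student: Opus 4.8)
The plan is to deduce everything from two invariance facts about the unitary part $\clh_u=\bigcap_{m=0}^\infty V^m\clh$ of the Wold decomposition of $V$. Recall that a closed subspace reduces $W$ exactly when it is invariant under both $W$ and $W^*$. Hence, if I can show that $\clh_u$ is invariant under $W$ and under $W^*$, then $\clh_u$ reduces $W$; and since $\clh_s=\clh\ominus\clh_u=\clh_u^\perp$, the subspace $\clh_s$ reduces $W$ as well. So the whole corollary will follow once these two invariance statements are in hand.

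The first fact, $W\clh_u\subseteq\clh_u$, is immediate: it is precisely Lemma~\ref{Invariant-subspace-with-equal-range} applied to the pair $(V,W)$ under the hypothesis $\clr(WV^m)=\clr(V^mW)$, $m\in\mathbb{Z}_+$. Nothing new is required here.

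For the second fact, $W^*\clh_u\subseteq\clh_u$, I would run the same computation as in the proof of Lemma~\ref{Invariant-subspace-with-equal-range}, but with $W$ replaced by $W^*$. Concretely, $W^*\clh_u=W^*\big(\bigcap_{m=0}^\infty V^m\clh\big)\subseteq\bigcap_{m=0}^\infty W^*V^m\clh$; by the second hypothesis $\clr(W^*V^m)=\clr(V^mW^*)$ this equals $\bigcap_{m=0}^\infty V^mW^*\clh$; and since $W$ is an isometry, $W^*$ is surjective (so $W^*\clh=\clh$), whence each term reduces to $V^m\clh$ and the intersection is exactly $\clh_u$.

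The hard part --- and the reason this is not a verbatim instance of Lemma~\ref{Invariant-subspace-with-equal-range} --- is the asymmetry between $W$ and $W^*$. In the lemma the isometry is pulled out of the intersection as an \emph{equality} $W\big(\bigcap_m A_m\big)=\bigcap_m W(A_m)$, which uses the injectivity of $W$; for the coisometry $W^*$ this equality can fail, since $W^*$ has a nontrivial kernel. The point I would stress is that for establishing invariance only the inclusion $W^*\big(\bigcap_m A_m\big)\subseteq\bigcap_m W^*(A_m)$ is needed, and this inclusion holds for any bounded operator. Combined with the range hypothesis and the surjectivity of $W^*$, it suffices, so no injectivity of $W^*$ is ever required, and the computation goes through.
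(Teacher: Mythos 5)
Your proposal is correct and matches the paper's approach: the paper states this corollary as an immediate consequence of Lemma \ref{Invariant-subspace-with-equal-range}, and your argument is exactly that deduction --- the lemma gives $W\clh_u\subseteq\clh_u$, rerunning its computation with $W^*$ and the hypothesis $\clr(W^*V^m)=\clr(V^mW^*)$ gives $W^*\clh_u\subseteq\clh_u$, so $\clh_u$ reduces $W$ and hence so does $\clh_s=\clh_u^{\perp}$. Your extra care about the asymmetry (only the inclusion $W^*\big(\bigcap_m A_m\big)\subseteq\bigcap_m W^*(A_m)$ is needed, together with the surjectivity $W^*\clh=\clh$, so injectivity of $W^*$ is never used) is precisely the detail the paper leaves implicit, and you handle it correctly.
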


The following results will be useful to determine the {\textit{Wold decomposition}}
for tuples of isometries:

\begin{lem}\label{Reducing-subspace-unitary}
Let $(V, W)$ be a pair of isometries on $\clh$ such that $\clr(V^m W^n)=\clr(W^nV^m)$ for all $m,n \in \mathbb{Z}_+$. Suppose $\clm \subseteq \clh$ reduces both the isometries $V$ and $W$. If either $V|_{\clm}$ or $W|_{\clm}$ is unitary, then 
$V^{*m}W^n \clm =W^n V^{*m} \clm$ for all $m,n \in \mathbb{Z}_+$.	
\end{lem}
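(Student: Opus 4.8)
The plan is to restrict everything to the reducing subspace $\clm$ and to treat the two unitary alternatives separately, the engine being the forward relation already supplied by Lemma~\ref{Reducing-subspace-with-equal-range}. Since $\clm$ reduces both $V$ and $W$, the compressions $V|_\clm$ and $W|_\clm$ are isometries on $\clm$ with $(V|_\clm)^*=V^*|_\clm$ and $(W|_\clm)^*=W^*|_\clm$. The elementary fact I would lean on is that the adjoint of an isometry is surjective: applying $V^*\clh=\clh$ to the isometry $V^m|_\clm$ on $\clm$ gives $V^{*m}\clm=\clm$, and likewise $W^{*n}\clm=\clm$, for all $m,n\in\mathbb{Z}_+$; moreover, if a compression is unitary then its powers are bijections of $\clm$, so that $V|_\clm$ unitary forces $V^m\clm=\clm$. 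With these in hand, I would first invoke Lemma~\ref{Reducing-subspace-with-equal-range}, whose hypotheses hold verbatim, to record
\[
V^mW^n\clm=W^nV^m\clm\qquad(m,n\in\mathbb{Z}_+).
\]

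Suppose first that $W|_\clm$ is unitary. Then $W^n\clm=\clm$, and since also $V^{*m}\clm=\clm$, we get
\[
V^{*m}W^n\clm=V^{*m}\clm=\clm=W^n\clm=W^nV^{*m}\clm,
\]
so the two sides coincide. This alternative is essentially immediate and does not really exploit the range condition.

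The substantive case is when $V|_\clm$ is unitary. Here $V^m\clm=\clm$, so the forward relation collapses to $V^mW^n\clm=W^n\clm$. Applying $V^{*m}$ to both of these closed subspaces and using $V^{*m}V^m=I$ (which undoes $V^m$ on any subspace of $\clm$) yields $V^{*m}W^n\clm=W^n\clm$; on the other hand $W^nV^{*m}\clm=W^n\clm$ because $V^{*m}\clm=\clm$. Comparing the two gives the desired identity $V^{*m}W^n\clm=W^nV^{*m}\clm$. The only genuine obstacle lies in this case: it amounts to showing that the unitary $V^{*m}|_\clm$ carries the subspace $W^n\clm$ back onto itself, and this is precisely where the equal-range hypothesis does its work, routed in through Lemma~\ref{Reducing-subspace-with-equal-range}. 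The one point needing a little care is that passing $V^{*m}$ across an equality of closed subspaces preserves it, which is legitimate because $V^{*m}|_\clm$ is a homeomorphism of $\clm$ in this case and because $V^{*m}V^m=I$.
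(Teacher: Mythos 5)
Your proof is correct, and in the substantive case it takes a cleaner route than the paper's. The paper never invokes Lemma~\ref{Reducing-subspace-with-equal-range} here; instead it goes back to Lemma~\ref{Equal-Range-Isometry} to get the operator identity $V^mW^nW^{*n}V^{*m}=W^nV^mV^{*m}W^{*n}$ and then runs a four-step chain of subspace equalities,
\[
W^nV^{*m}\clm= W^nW^{*n} V^{*m} \clm= V^{*m}V^m W^nW^{*n}V^{*m} \clm
=V^{*m} W^nV^mV^{*m}W^{*n}\clm=V^{*m}W^n \clm,
\]
which interleaves the projection identity with the same surjectivity facts you use ($V^{*m}\clm=\clm$, $W^{*n}\clm=\clm$, and $V^mV^{*m}$ acting as the identity on $\clm$). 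You instead route the equal-range hypothesis through the already-proved subspace relation $V^mW^n\clm=W^nV^m\clm$ of Lemma~\ref{Reducing-subspace-with-equal-range}, and then observe that when $V|_\clm$ is unitary both sides of the desired identity collapse to the single subspace $W^n\clm$: the relation gives $V^mW^n\clm=W^n\clm$, applying $V^{*m}$ and using $V^{*m}V^m=I$ yields $V^{*m}W^n\clm=W^n\clm$, while $W^nV^{*m}\clm=W^n\clm$ is immediate from $V^{*m}\clm=\clm$. This is a genuine simplification the paper does not make explicit, and it keeps all manipulations at the level of subspaces rather than operators; what the paper's version buys in exchange is self-containedness modulo only Lemma~\ref{Equal-Range-Isometry}, without passing through the intermediate Lemma~\ref{Reducing-subspace-with-equal-range}. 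The trivial case ($W|_\clm$ unitary) is identical in both treatments. One small remark: your closing caution about $V^{*m}|_\clm$ being a homeomorphism is not actually needed for the equality of images — applying any map to two equal sets gives equal images — it would only matter if you needed the images to remain closed, which the statement does not require.
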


\begin{proof}
Assume that $(V, W)$ is a pair of isometries on $\clh$ such 
that $\clr(V^m W^n)=\clr(W^nV^m)$ for all $m,n \in \mathbb{Z}_+$. 
Now using Lemma \ref{Equal-Range-Isometry}, we have 
\[
V^mW^nW^{*n}V^{*m}=W^nV^mV^{*m}W^{*n} \mbox{~~for}~m, n \in \mathbb{Z}_+.
\]
If $V|_{\clm}$ is unitary, then for each $m,n \in \mathbb{Z}_+$, we have
\[
W^nV^{*m}\clm= W^nW^{*n} V^{*m} \clm= V^{*m}V^m W^nW^{*n}V^{*m} \clm
=V^{*m} W^nV^mV^{*m}W^{*n}\clm=V^{*m}W^n \clm.
\]
On the other hand if $W|_{\clm}$ is unitary, then clearly 
\[
V^{*m}W^n \clm= V^{*m} \clm= \clm =W^n\clm= W^nV^{*m}\clm 
\mbox{~~~for all}~m, n \in \mathbb{Z}_+.
\]
This completes the proof.
\end{proof}

\begin{lem}\label{Equivalent-condition-Range}
Let $(V, W)$ be a pair of isometries acting on $\clh$ such that $\clr(V^m W^n) =\clr(W^nV^m)$ for all $m,n \in \mathbb{Z}_+$. Then the following are equivalent:
\begin{itemize}
\item[(i)] $ \clr(V^{*m} W^n)= \clr(W^n V^{*m})$ for all $m,n \in \mathbb{Z}_+$.
\item[(ii)] $\clr(W^{*n} V^m)= \clr(V^m W^{*n})$ for all $m,n \in \mathbb{Z}_+$.
\end{itemize}	
\end{lem}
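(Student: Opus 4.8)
The plan is to reduce the equivalence to the single implication (i) $\Rightarrow$ (ii) and recover the converse from symmetry: the standing hypothesis $\clr(V^mW^n)=\clr(W^nV^m)$ is invariant under interchanging $(V,m)$ with $(W,n)$, and this same interchange carries statement (i) to statement (ii) verbatim. So once (i) $\Rightarrow$ (ii) is proved for every such pair, applying it to the pair with the roles of $V$ and $W$ (and of $m$ and $n$) swapped yields (ii) $\Rightarrow$ (i).

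First I would record the operator identity behind the hypothesis. Since $V^mW^n$ and $W^nV^m$ are isometries of equal range, Lemma \ref{Equal-Range-Isometry} gives
\[
V^mW^nW^{*n}V^{*m}=W^nV^mV^{*m}W^{*n}\qquad(m,n\in\mathbb{Z}_+).
\]
Write $A=V^mV^{*m}$ and $B=W^nW^{*n}$ for the (orthogonal) range projections of $V^m$ and $W^n$. Because $V$ and $W$ are isometries, $V^{*m}$ and $W^{*n}$ are surjective, so $\clr(W^nV^{*m})=\clr(W^n)$ and $\clr(V^mW^{*n})=\clr(V^m)$ are both closed. Hence conditions (i) and (ii) are exactly the assertions $\clr(V^{*m}W^n)=\clr(W^n)$ and $\clr(W^{*n}V^m)=\clr(V^m)$, and the two operators $V^{*m}W^n$ and $W^{*n}V^m$ that occur in them are adjoints of one another.

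The heart of the argument is a short computation. Putting $S=V^{*m}W^n$ (so $S^*=W^{*n}V^m$), the displayed identity together with $V^{*m}V^m=I_{\clh}$ and $W^{*n}W^n=I_{\clh}$ yields
\[
S^*BS=W^{*n}\big(V^mW^nW^{*n}V^{*m}\big)W^n=W^{*n}\big(W^nV^mV^{*m}W^{*n}\big)W^n=A.
\]
Now assume (i). Then $\clr(S)=\clr(W^nV^{*m})=\clr(W^n)=\clr(B)$ is closed, so $B$ is the orthogonal projection onto $\clr(S)$ and therefore $BS=S$. Substituting this into the identity above collapses it to $S^*S=A$, a projection; thus $S$ is a partial isometry, whence $\clr(S^*)=\clr(S^*S)=\clr(A)=\clr(V^m)$. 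Since $S^*=W^{*n}V^m$, this is precisely statement (ii).

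The step I expect to be the crux is the passage from a range equality to the operator relation $BS=S$, and then to $S^*S=A$. Equality of ranges is not in general preserved by passing to adjoints, so the naive idea of simply ``adjointing'' (i) to obtain (ii) must fail; the special structure has to be used. What makes it work is that the hypothesis forces the exact identity $S^*BS=A$, and the \emph{extra} content of (i)---that $\clr(S)$ is not merely contained in, but equal to, the closed subspace $\clr(B)$---is exactly what lets one replace $B$ by the identity on $\clr(S)$ and thereby recognize $S$ as a partial isometry. Everything else (the surjectivity of $V^{*m},W^{*n}$ and the resulting closedness of the relevant ranges) is routine and is only what turns (i) and (ii) into clean statements about closed subspaces.
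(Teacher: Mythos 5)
Your proof is correct, but it takes a genuinely different route from the paper's. The paper proves (i) $\Rightarrow$ (ii) by a direct chain of subspace-image equalities,
\[
V^m W^{*n}\clh=W^{*n}W^n V^m \clh=W^{*n} V^m W^{n} V^{*m} \clh =W^{*n} V^m  V^{*m} W^n \clh=W^{*n} V^m \clh,
\]
justified by surjectivity of $V^{*m}$ and $W^{*n}$, the standing hypothesis, and (i), and then states that the converse ``follows the same line.'' You instead work at the operator level: Lemma \ref{Equal-Range-Isometry} gives $S^*BS=A$ for $S=V^{*m}W^n$, $A=V^mV^{*m}$, $B=W^nW^{*n}$; condition (i) says exactly $\clr(S)=\clr(B)$, hence $BS=S$ and $S^*S=A$, so $S$ is a partial isometry and $\clr(S^*)=\clr(A)=\clr(V^m)=\clr(V^mW^{*n})$, which is (ii); the converse then comes free from the $V\leftrightarrow W$, $m\leftrightarrow n$ symmetry of the hypothesis. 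What your route buys is rigor precisely where the paper is tersest: in the paper's chain, each of the first four subspaces is easily identified with $V^m\clh$ (the fourth because (i) and the hypothesis give $V^mV^{*m}W^n\clh=V^mW^nV^{*m}\clh=W^nV^m\clh$), so the final equality $W^{*n}V^mV^{*m}W^n\clh=W^{*n}V^m\clh$ is, modulo the first equality, nothing but assertion (ii) itself, and the paper offers no separate justification for it; your partial-isometry step is exactly the argument that closes this. Your proof also explains why the lemma holds even though range equalities are not generally preserved under adjoints: under the hypothesis, (i) and (ii) are equivalent to adjoint operator identities (that $V^mV^{*m}W^nW^{*n}$, respectively $W^nW^{*n}V^mV^{*m}$, equals the range projection of $V^mW^n$), and self-adjointness of projections does the rest. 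Finally, your argument yields the implication for each fixed pair $(m,n)$ separately, which is slightly finer than the quantified statement; the paper's approach, once its last step is supplied, remains the shorter purely set-theoretic computation.
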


\begin{proof}
$(i) \Rightarrow (ii)$:
Suppose that $V^{*m}W^n\clh= W^n V^{*m}\clh$ for $m,n \in \mathbb{Z}_+$. Then  
$\clr(V^{*m}W^n)$ is a closed subspace of $\clh$, and hence $\clr(W^{*n} V^m )$ is 
also closed. Therefore,
\[
V^m W^{*n}\clh=W^{*n}W^n V^m \clh=W^{*n} V^m W^{n} V^{*m} \clh =W^{*n} V^m  
V^{*m} W^n \clh=W^{*n} V^m \clh.
\]
	
The converse part follows in the same line. 
\end{proof}

The next result is more inclusive than the last one.

\begin{lem}\label{Lemma- Main result}
Let $(V,W)$ be a pair of isometries acting on $\clh$ such that $\clr(V^m W^n)=\clr(W^nV^m)$ and $\clr(V^{*m} W^n)=\clr(W^nV^{*m})$ for all 
$m,n \in \mathbb{Z}_+$. If $\clm \subseteq \clh$ reduces both the 
isometries $V$ and $W$, then $V^{*m} W^n\clm =W^nV^{*m} \clm$
for all $m,n \in \mathbb{Z}_+$.
\end{lem}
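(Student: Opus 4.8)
The plan is to reduce the claim about $\clm$ to the global range identities that hold on all of $\clh$, exactly as in the proofs of Lemma~\ref{Reducing-subspace-with-equal-range} and Lemma~\ref{Reducing-subspace-unitary}, but now using the extra hypothesis $\clr(V^{*m}W^n)=\clr(W^nV^{*m})$. First I would invoke Lemma~\ref{Equal-Range-Isometry} applied to the pair $(V^{*m}, W^n)$. The subtlety is that $V^{*m}$ is in general \emph{not} an isometry, so Lemma~\ref{Equal-Range-Isometry} does not apply verbatim; I expect this to be the main obstacle, and I address it below.

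Assuming one can convert the two range hypotheses into a single operator identity of the form
\[
V^{*m}W^n W^{*n} V^m = W^n V^{*m} V^m W^{*n},
\]
the conclusion is then immediate by the same mechanism used earlier: since $\clm$ reduces both $V$ and $W$, it is invariant under $W^{*n}$, $V^m$, $V^{*m}$ and $W^n$, so applying the identity to vectors of $\clm$ gives
\[
V^{*m}W^n \clm = V^{*m} W^n W^{*n} V^m V^{*m} \clm = W^n V^{*m} V^m W^{*n} V^{*m}\clm = W^n V^{*m}\clm,
\]
where one uses $V^m V^{*m}\clm$ and $W^n W^{*n}\clm$ equal $\clm$ (because $\clm$ reduces the isometries, so $V|_\clm$ and $W|_\clm$ are isometries and $V^m V^{*m}$, $W^n W^{*n}$ act as the identity followed by the range projection, which fixes $\clm$). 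The bookkeeping here is routine once the operator identity is in hand.

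The hard part, then, is justifying the operator identity from the range equalities. The clean route is to observe that $\clr(V^{*m}W^n)=\clr(W^nV^{*m})$ forces these ranges to be closed (the hypotheses in Lemma~\ref{Equivalent-condition-Range} already extract this kind of closedness), and that for any two operators with equal \emph{closed} ranges the corresponding orthogonal range projections coincide; hence the projections onto $\clr(V^{*m}W^n)$ and onto $\clr(W^nV^{*m})$ are equal. I would then express each range projection in terms of the operators: on the one hand $W^n V^{*m} V^m W^{*n} = W^n W^{*n}$ is the projection onto $\clr(W^n)$ composed appropriately, while on the other hand $V^{*m}W^n W^{*n} V^m$ is the corresponding selfadjoint operator built from the other composition. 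The point is that equality of ranges of two operators $A,B$ with $A^*A$ and $B^*B$ invertible on their supports translates into $A A^\dagger = B B^\dagger$; here both $W^n V^{*m}$-type products simplify because $V^m V^{*m}$ and $W^n W^{*n}$ are projections, collapsing the Moore--Penrose inverses to ordinary adjoints and yielding the displayed identity.

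An alternative, and perhaps safer, strategy is to bypass the nonisometric $V^{*m}$ altogether by combining Lemma~\ref{Reducing-subspace-with-equal-range} with Lemma~\ref{Equivalent-condition-Range}. By Lemma~\ref{Equivalent-condition-Range}, the hypothesis $\clr(V^{*m}W^n)=\clr(W^nV^{*m})$ is equivalent to $\clr(W^{*n}V^m)=\clr(V^mW^{*n})$. Applying Lemma~\ref{Reducing-subspace-with-equal-range} to the reducing subspace $\clm$ gives $V^mW^n\clm=W^nV^m\clm$, and one would like to take adjoints/inverses on the reducing subspace: on $\clm$ the restrictions are genuine isometries, so $V^{*m}|_\clm$ and $W^{*n}|_\clm$ are the Hilbert-space adjoints of honest operators, and the equal-range relation $V^mW^{*n}\clm=W^{*n}V^m\clm$ can be manipulated as in Lemma~\ref{Reducing-subspace-unitary} to produce $V^{*m}W^n\clm=W^nV^{*m}\clm$. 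I expect the write-up to follow this second line, since it keeps every step inside the category of isometries restricted to $\clm$ and thereby reuses Lemma~\ref{Equal-Range-Isometry} legitimately rather than appealing to Moore--Penrose inverses of non-isometric operators.
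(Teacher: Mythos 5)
Your proposal has a genuine gap, and in fact two of them. The fatal one is the claim, used to justify the displayed chain of subspace equalities, that ``$V^mV^{*m}\clm$ and $W^nW^{*n}\clm$ equal $\clm$.'' This is false: since $\clm$ reduces $V$, one has $V^{*m}\clm=\clm$ and hence $V^mV^{*m}\clm=V^m\clm$, which is a \emph{proper} subspace of $\clm$ whenever $V|_{\clm}$ has a nontrivial shift part. A concrete counterexample satisfying all hypotheses of the lemma is the doubly commuting pair $V=S\otimes I$, $W=I\otimes S$ on $\ell^2(\mathbb{Z}_+)\otimes\ell^2(\mathbb{Z}_+)$ with $\clm=\clh$; there $VV^*\clm=V\clh\neq\clh$. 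The whole point of the lemma is to handle shift parts (the unitary case is already Lemma \ref{Reducing-subspace-unitary}), so this step cannot be waved through. Worse, once the operator identity is granted, your first equality $V^{*m}W^n\clm = V^{*m}W^nW^{*n}V^mV^{*m}\clm$ is itself equivalent to the conclusion: the middle expression equals $\big(V^{*m}W^nW^{*n}V^m\big)\clm = W^nW^{*n}\clm = W^n\clm = W^nV^{*m}\clm$, so asserting the first equality is asserting the lemma --- the argument is circular. There is also a secondary gap in your derivation of the identity $V^{*m}W^nW^{*n}V^m = W^nV^{*m}V^mW^{*n}$: collapsing the Moore--Penrose inverse $A^\dagger$ to $A^*$ requires $A=V^{*m}W^n$ to be a partial isometry, and $V^{*m}W^n$ is not an isometry; proving it is a partial isometry under the hypotheses is essentially the same work as proving the identity. (The identity is actually true --- the two range hypotheses force $W^nW^{*n}V^m=V^mW^nW^{*n}$, since $\clr(V^{*m}W^n)=\clr(W^nV^{*m})=W^n\clh$ yields $V^m\cln(W^{*n})\subseteq\cln(W^{*n})$, while $\clr(V^mW^n)=\clr(W^nV^m)\subseteq W^n\clh$ gives $W^nW^{*n}V^mW^nW^{*n}=V^mW^nW^{*n}$ --- but you would still need a non-circular route from this commutation to the subspace statement.)

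Your guess about the paper's structure is also off: the paper does not run your second strategy either, and ``manipulate as in Lemma \ref{Reducing-subspace-unitary}'' cannot work verbatim because that lemma's proof uses that one of the restrictions is unitary. What the paper actually does is apply the Wold decomposition of $V$ and split $\clm=(\clh_u\cap\clm)\oplus(\clh_s\cap\clm)$, both summands reducing $V$ and $W$ by Corollary \ref{Reducing-subspace-corollary}. On the unitary part it invokes Lemma \ref{Reducing-subspace-unitary}. On the shift part it expands $h_s=\sum_{k\geq 0}V^k\eta_k$ with $\eta_k\in\cln(V^*|_{\clh_s\cap\clm})$, shows $V^{*m}W^nV^k\eta_k=0$ for $k<m$ by an inner-product computation that uses the equivalence of Lemma \ref{Equivalent-condition-Range} (to move $W^{*n}$ past $V^m$ inside the pairing), and handles the tail $\sum_{k\geq m}V^k\eta_k$ via the operator identity $V^mW^nW^{*n}V^{*m}=W^nV^mV^{*m}W^{*n}$ coming from Lemma \ref{Equal-Range-Isometry} applied to the genuine isometries $V^mW^n$ and $W^nV^m$. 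So the paper's mechanism is a case split steered by the Wold decomposition plus a term-by-term computation, not a single operator identity applied to $\clm$.
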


\begin{proof}
If $\clm$ is $V,W$ reducing subspace of $\clh$, then by Lemma \ref{Reducing-subspace-with-equal-range}, $V^{m}W^n \clm=W^nV^{m} \clm$. Again $\clm= (\clh_u \cap \clm) \oplus (\clh_s \cap \clm)$, where $\clh=\clh_u \oplus \clh_s$ is the Wold decomposition for $V$. Now Corollary \ref{Reducing-subspace-corollary} yields that $\clh_u \cap \clm $ and $\clh_s \cap \clm $ reduce both the operators $V$ and $W$. Let $\eta \in \clm$. Then $\eta=h_u \oplus h_s$, where $h_u \in \clh_u \cap \clm $ and $h_s \in  \clh_s \cap \clm$. Since $V|_{\clh_u \cap \clm}$ is unitary, it follows that 
$V^{*m} W^n (\clh_u \cap \clm) = W^n V^{*m} (\clh_u \cap \clm )$. On the other hand,
$V|_{\clh_s \cap \clm}$ is shift and hence $h_s =\sum\limits_{k=0}^{\infty}V^k \eta_k$, where $\eta_k \in  \cln(V^*|_{\clh_s \cap \clm}) \subseteq  \clh_s \cap \clm$. Consequently, for each fixed $m,n \in \mathbb{Z}_+$, we have
\begin{align*}
V^{*m}W^n h_s 
= V^{*m}W^n \big(\sum\limits_{k=0}^{\infty}V^k \eta_k\big)
&= V^{*m}W^n \big(\sum\limits_{k=0}^{m-1}V^k \eta_k\big) +
V^{*m} W^n \big(\sum\limits_{k=m}^{\infty} V^k \eta_k \big)  \\
&= \sum\limits_{k=0}^{m-1} V^{*m}W^nV^k \eta_k  + V^{*m} W^n 
\big(\sum\limits_{k=m}^{\infty} V^k \eta_k \big).
\end{align*}
Since $ \clr(V^{*m} W^n)= \clr(W^n V^{*m})$ and $\clr(W^{*n} V^m)= \clr(V^m W^{*n})$ are equivalent by Lemma \ref {Equivalent-condition-Range}, for each $ h\in \clh$ there exists $h' \in \clh$ such that $W^{*n}V^m h = V^m  W^{*n}h'$. Now for each $h \in \clh$ and for $\eta_k \in \cln(V^*|_{\clh_s \cap \clm})$, where $0 \leq k \leq m-1$, we have
\begin{align*}
\langle  V^{*m} W^n V^k \eta_k, h \rangle
&= \langle W^n V^k \eta_k, V^mh \rangle\\
&= \langle V^k  \eta_k, W^{*n}V^m h \rangle \\
&= \langle V^k \eta_k, V^m  W^{*n}h'\rangle \\
&=\langle \eta_k, V^{m-k}  W^{*n}h' \rangle \\
&=\langle V^{*(m-k)}\eta_k,   W^{*n}h' \rangle \\
&=0.
\end{align*}
It follows that $ V^{*m} W^n V^k \eta_k=0$  for $ 0 \leq k \leq m-1$. Hence
$\sum\limits_{k=0}^{m-1} V^{*m}W^nV^k \eta_k =0$.

Now $V^m, W^n$ are isometries on $\clh$ and $V^mW^n\clh= W^nV^m \clh$ for each 
$m,n \in \mathbb{Z}_+$. Using Lemma \ref{Equal-Range-Isometry}, we have  
$V^m W^n W^{*n}V^{*m}=W^n V^mV^{*m}W^{*n}$. Then
\begin{align*}
W^{*n}V^{*m} & = W^{*n} V^{*m} W^n V^mV^{*m}W^{*n}\\
\mbox{and~~} W^n V^m   &= V^m W^n W^{*n}V^{*m}W^nV^m.
\end{align*}
Now using this, we have
\begin{align*}
W^n V^{*m} (\clh_s \cap \clm) & =W^n W^{*n}V^{*m}  (\clh_s \cap \clm)\\
&= W^n W^{*n} V^{*m}W^n V^m V^{*m}W^{*n} (\clh_s \cap \clm)\\
& = W^nW^{*n}V^{*m} W^n V^m V^{*m}  (\clh_s \cap \clm). 
\end{align*}
Thus for each $ h_s \in  \clh_s \cap \clm$, there exists some $h_s' \in \clh_s \cap \clm$ such that 
\[
W^nW^{*n}V^{*m}W^n V^m V^{*m} h_s= W ^nV^{*m} h_s'.
\]
Now		
\begin{align*}	
V^{*m}W^n h_s=V^{*m} W^n \big(\sum\limits_{k=m}^{\infty} V^k \eta_k \big)
&=V^{*m} W^n V^m \big(\sum\limits_{k=m}^{\infty} V^{k-m} \eta_k \big)\\
&= V^{*m} V^m W^n W^{*n} V^{*m}W^n V^m \big(\sum\limits_{k=m}^{\infty} V^{k-m} 
\eta_k \big) \\
&= W^nW^{*n}V^{*m}W^n V^m V^{*m}\big(\sum\limits_{k=m}^{\infty} V^k \eta_k \big) \\
&=  W^nW^{*n}V^{*m}W^n V^m V^{*m}(h_s - \sum\limits_{k=0}^{m-1}V^k \eta_k ) \\
&= W^nV^{*m} h_s'
\end{align*} 	
as $\eta_k \in \cln(V^*|_{\clh_s \cap \clm})$. Therefore,
\[
V^{*m} W^n \eta= V^{*m} W^n h_u + V^{*m} W^n h_s= W^nV^{*m} h_u' + W^nV^{*m} h_s'=W^nV^{*m} \eta',
\]
where $\eta'= h_u' \oplus h_s' \in \clm$. Hence $V^{*m}W^n \clm=W^nV^{*m} \clm$
for $m,n \in \mathbb{Z}_+$. This completes the proof. 
\end{proof}

Now we shall give some concrete examples of pairs of isometries
$(V_1, V_2)$ on Hilbert spaces satisfying certain properties.

\begin{ex}\label{Example-non commuting but equal range}
Let $(V_1, V_2)$ be a pair of isometries on a Hilbert space 
$\ell^2(\mathbb{Z}_+)$ defined by
\begin{align*}
V_1(x_0,x_1,x_2, \dots) &= (0, x_0,x_1,  \dots),\\
\text{and} \quad V_2(x_0,x_1,x_2, \dots) &=(0, \alpha_0{x_0},\alpha_1 {x_1},\dots)
\end{align*}
where $(x_0, x_1, x_2,\dots) \in \ell^2(\mathbb{Z}_+)$ and $|\alpha_i|=1$ 
for all $i \in \mathbb{Z}_+$. Then for each $(x_0,x_1,x_2, \dots) 
\in \ell^2(\mathbb{Z}_+)$ 
\begin{align*}
V_1V_2(x_0,x_1,x_2, \dots)&= V_1(0, \alpha_0{x_0},\alpha_1 {x_1},\dots) 
=(0,0,  \alpha_0{x_0},\alpha_1 {x_1},\dots), \\
\text{and} \quad  V_2V_1(x_0,x_1,x_2, \dots)&= V_2(0,x_0,x_1,x_2, \dots)
= (0,0, \alpha_1{x_0},\alpha_2 {x_1},\dots).
\end{align*}		
Therefore, the pair of isometries $(V_1,V_2)$ is neither commuting nor twisted.
Again 
\begin{align*}
V_1V_2(x_0,x_1,x_2, \dots)&	=(0,0,  \alpha_0{x_0},\alpha_1 {x_1},\dots) \\
&= V_2V_1(  \frac{\alpha_0}{\alpha_1}{x_0}, \frac{\alpha_1}{\alpha_2} {x_1},\dots) \\
&= V_2V_1(y_0,y_1,y_2, \dots),  
\end{align*}
where $(y_0,y_1,y_2, \dots)= (\frac{\alpha_0}{\alpha_1}{x_0}, \frac{\alpha_1}{\alpha_2}
{x_1},\dots) \in \ell^2(\mathbb{Z}_+)$, that means, $\clr(V_1V_2)=\clr(V_2V_1)$. 
Also 
\begin{align*}
V_2V_1^*(x_0,x_1,x_2,\dots)&=(0, \alpha_0x_1, \alpha_1x_2,\dots),\\
\mbox{and~~} V_1^*V_2((y_0,y_1,y_2,\dots) & = (\alpha_0y_0, \alpha_1y_1,\dots).
\end{align*}
Hence the pair of isometries $(V_1,V_2)$ on $\ell^2(\mathbb{Z}_+)$ is neither 
doubly commuting nor doubly twisted. However, $\clr(V_1V_2)=\clr(V_2V_1)$ but 
$\clr(V_1^*V_2)\neq \clr(V_2V_1^*)$. 
\end{ex}

\begin{ex}\label{Example3-proper}
Let $H^2(\mathbb{D})$ denotes the Hardy space over the open unit disc 
$\mathbb{D}$. The Hardy space over the bidisc $\mathbb{D}^2$, denoted by $H^2(\mathbb{D}^2)$, can be identified with $H^2(\mathbb{D}) \otimes  H^2(\mathbb{D})$ through the canonical unitary $z^{n_1} \otimes z^{n_2}\mapsto z_1^{n_1} z_2^{n_2}$ for $(n_1, n_2) \in \mathbb{Z}_+^2$. The multiplication operator $M_{z_i}$ on $H^2(\mathbb{D}^2)$ by the co-ordinate function $z_i$ on $\mathbb{D}$ is defined by $M_{z_i} f= z_i f$ for $f \in H^2(\mathbb{D}^2)$ and $i=1,2$. 
		
For each sequence $\bm{r} =(r_n) \subset S^1$, define a map $ \Delta[{\bm{r}}]$ on $H^2(\mathbb{D})$ as
\[
\Delta[{\bm{r}}] z^n= {r_n} z^n \hspace{1cm}  (n \in \mathbb{Z}_+). 
\]
Then $\Delta[{\bm{r}}]$ is a unitary diagonal operator. Now define $V_1$ and $V_2$ on $H^2(\mathbb{D}^2)$ as 
\[
V_1=  M_{z} \otimes I_{H^2(\mathbb{D})} 
\ \ \ \text{and} \quad 
V_2= \Delta[{\bm{r}}] \otimes  M_{z}. 
\] 
Clearly, $(V_1,V_2)$ is a pair of isometries on $H^2(\mathbb{D}^2)$. 
Then for $p, q \in \mathbb{Z}_+$,
\[
V_1^p=  M_{z}^p \otimes I_{H^2(\mathbb{D})}
\ \ \ \text{and} \quad 
V_2^q=  \Delta[{\bm{r}}]^q \otimes  M_{z}^q. 
\] 
Now
\begin{align*}
V_1^pV_2^q(z_1^{n_1} z_2^{n_2})
&=  ( M_{z}^p \Delta[{\bm{r}}]^q \otimes M_{z}^q)(z^{n_1} \otimes z^{n_2})\\
&= r_{n_1}^q z^{n_1+p} \otimes  z^{n_2+q} \\
&=\frac{r_{n_1}^q}{r_{n_1+p}^q}( r_{n_1+p}^q z^{n_1+p} \otimes  z^{n_2+q})\\
&=(\Delta[{\bm{r}}]^q M_{z}^p  \otimes  M_{z}^q )\big(\frac{r_{n_1}^q}{r_{n_1+p}^q} 
(z^{n_1} \otimes z^{n_2})\big)\\
&= V_2^q V_1^p( \frac{r_{n_1}^q}{r_{n_1+p}^q} z_1^{n_1} z_2^{n_2}).
\end{align*}
Also
\begin{align*}
V_1^{*p}V_2^q(z_1^{n_1} z_2^{n_2})&= (M_z^{*p}\Delta[{\bm{r}}]^q \otimes  M_{z}^{q}) 
(z^{n_1} \otimes z^{n_2})\\
&=\begin{cases} {r}_{n_1}^q z^{n_1 -p} \otimes  z^{n_2 + q} & 
\text{if $n_1 \geq p$},\\
0 & \text{if $n_1 < p$}
\end{cases} \\
&=\begin{cases}
\frac{{r}_{n_1}^q}{{r}_{n_1 -p}^q} ( {r}_{n_1 -p}^q  z^{n_1- p} \otimes  
z^{n_2 + q } ) & \text{if $n_1 \geq p$},\\
0 & \text{if $n_1 < p$} \\
\end{cases} \\
&= (\Delta[{\bm{r}}]^q M_{z}^{*p}  \otimes  M_{z}^{q}) 
\big (\frac{{r}_{n_1}^q}{{r}_{n_1 -p}^q} ( z^{n_1} \otimes z^{n_2})\big)\\
&=V_2^qV_1^{*p}(\frac{{r}_{n_1}^q}{{r}_{n_1 -p}^q} z_1^{n_1} z_2^{n_2}).
\end{align*}
Therefore, $(V_1,V_2)$ is neither commuting nor twisted, but 
$\clr(V_1^pV_2^q)=\clr(V_2^qV_1^p)$ and $\clr(V_1^{*p}V_2^q)=\clr(V_2^qV_1^{*p})$ for all $p,q \in \mathbb{Z}_+$, that is, $(V_1,V_2)$ is a pair of {\textit{isometries with equal range}}.  
\end{ex}

\section{Structure for $n$-tuple of isometries} 

In this section, we explore some important supplementary results for $n$-tuples of isometries satisfying certain conditions and also establish the structure of decomposition spaces for $n$-tuple of isometries $(V_1, \ldots, V_n)$ on $\clh$ satisfying $\clr(V_i^{m_i} V_j^{m_j})= \clr(V_j^{m_j} V_i^{m_i})$ and $\clr(V_i^{*m_i} V_j^{m_j})= \clr(V_j^{m_j} V_i^{*m_i})$ for $m_i, m_j \in \mathbb{Z}_+$ and $1 \leq i<j \leq n$.

In what follows, the notations below will be fixed for the several variables case. 
We assume $n \geq 2$ is a positive integer. For a given integer $k$ with $1 \leq k \leq n$, we denote the set $\{1,2,\dots,k\}$ by $I_k$. Let $\Lambda =\{i_1< \dots < i_l\} \subseteq I_n $ for $1 \leq l \leq n$, $I_n \backslash \Lambda= \{i_{l+1}< \dots < i_n\}$. The cardinality 
of the set $\Lambda$ is denoted by $|\Lambda|$. We denote by $V_{\Lambda}$ the 
$|\Lambda|$-tuple of isometries $(V_{i_1},\dots, V_{i_l})$ and $\mathbb{Z}_{+}^{|\Lambda|} :=\{ {\bf {m}}=(m_{i_1},\dots,m_{i_l}): m_{i_j}  \in \mathbb{Z}_+, 1 \leq j \leq l \}$. Also $V_{i_1}^{m_{i_1}} \cdots V_{i_l}^{m_{i_l}}$ is denoted by $V_{\Lambda}^ {\bf m}$ for $ {\bf m} \in \mathbb{Z}_+^{\Lambda}$, and $e_j$ denotes the element in $\mathbb{Z}^n_+$ with $1$ in the $j$th position and zero elsewhere. Before proceeding further, we introduce one more notation for the rest of the paper: if $ \Lambda \subseteq I_k$ and $q \notin I_k$, then $\widetilde{\Lambda}$ is denoted by same $\Lambda$ but we will treat $\widetilde{\Lambda}$ as a subset of $I_{k} \cup \{q\}$. 
   
We define $\clw_i=\cln( V_i^*)$ for each $1 \leq i \leq n$ and 
\[
\clw_{\Lambda} (V) = \clw_{\Lambda}:= \bigcap_{i \in \Lambda} \clw_i,
\]
where $\Lambda$ is a non-empty subset of $I_k$ for $1 \leq k \leq n$, 
and $\clw_{\emptyset}=\clh$.

Now we list some important results which will be used throughout the sequel.

\begin{prop}\label{Multi-Index-Reducing-Prop}
Let $(V_1, \ldots, V_n)$ be an $n$-tuple of isometries on $\clh$ 
such that $\clr(V_i V_j)=\clr(V_j V_i)$ and $\clr(V_i^* V_j)=\clr(V_j V_i^*)$ 
for $1 \leq i < j \leq n$. If $\Lambda$ is a subset of $I_n$, then $\clw_{\Lambda}$ 
reduces $V_j$ for each $j \in I_n \backslash \Lambda$.
\end{prop}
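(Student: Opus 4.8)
The plan is to reduce the statement to a single--index claim and then to a commutation identity between projections. First I would observe that it suffices to prove that for each fixed $i \in \Lambda$ the subspace $\clw_i = \cln(V_i^*) = \clr(V_i)^{\perp}$ reduces $V_j$ for every $j \in I_n \backslash \Lambda$: indeed, if $V_j$ and $V_j^*$ each leave every $\clw_i$ with $i \in \Lambda$ invariant, then they leave the intersection $\clw_{\Lambda} = \bigcap_{i \in \Lambda} \clw_i$ invariant, so $\clw_{\Lambda}$ reduces $V_j$. Since the orthogonal projection onto $\clr(V_i)$ is $V_iV_i^*$, the assertion ``$\clw_i$ reduces $V_j$'' is equivalent to ``$V_iV_i^*$ commutes with $V_j$''. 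Note also that $i \in \Lambda$ and $j \in I_n \backslash \Lambda$ forces $i \neq j$, so the pairwise hypotheses are available.

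The core computation would run as follows. From $\clr(V_iV_j) = \clr(V_jV_i)$ and Lemma \ref{Equal-Range-Isometry}, applied to the isometries $V_iV_j$ and $V_jV_i$, I get $V_iV_jV_j^*V_i^* = V_jV_iV_i^*V_j^*$. Multiplying this identity on the right by $V_j$ and using $V_j^*V_j = I_{\clh}$ yields $V_jV_iV_i^* = V_iV_jV_j^*V_i^*V_j$. To conclude $V_jV_iV_i^* = V_iV_i^*V_j$ it then remains to show $V_jV_j^*V_i^*V_j = V_i^*V_j$, that is, $\clr(V_i^*V_j) \subseteq \clr(V_j)$, because $V_jV_j^*$ is the projection onto $\clr(V_j)$. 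When $i < j$ this is immediate: $\clr(V_i^*V_j) = \clr(V_jV_i^*) = V_j\clh = \clr(V_j)$, where $\clr(V_jV_i^*) = V_j\clh$ uses that $V_i^*$ is surjective. Hence $V_iV_i^*$ commutes with $V_j$ and $\clw_i$ reduces $V_j$.

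The main obstacle is the asymmetry of the hypotheses in the index ordering: the adjoint condition is stated only as $\clr(V_i^*V_j) = \clr(V_jV_i^*)$ for $i < j$, whereas for an index $i \in \Lambda$ with $i > j$ only $\clr(V_j^*V_i) = \clr(V_iV_j^*)$ is directly available. I would dispose of this case by a direct invariance argument. The condition $\clr(V_j^*V_i) = \clr(V_iV_j^*) = V_i\clh = \clr(V_i)$ gives $\clr(V_j^*V_i) \subseteq \clr(V_i)$, equivalently $V_iV_i^*V_j^*V_i = V_j^*V_i$; taking adjoints, $V_i^*V_jV_iV_i^* = V_i^*V_j$, so $V_i^*V_j\,(I_{\clh}-V_iV_i^*) = 0$, i.e. $V_i^*V_j$ annihilates $\clw_i$ and thus $V_j\clw_i \subseteq \clw_i$. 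On the other hand, $\clr(V_iV_j) = \clr(V_jV_i)$ gives $V_jV_i\clh = V_iV_j\clh \subseteq \clr(V_i)$, so $\clr(V_i)$ is $V_j$-invariant and hence $\clw_i = \clr(V_i)^{\perp}$ is $V_j^*$-invariant; the two inclusions together show $\clw_i$ reduces $V_j$. Alternatively, one can unify both orderings by first proving the single--power analogue of Lemma \ref{Equivalent-condition-Range}, namely that under $\clr(V_iV_j) = \clr(V_jV_i)$ the conditions $\clr(V_i^*V_j) = \clr(V_jV_i^*)$ and $\clr(V_j^*V_i) = \clr(V_iV_j^*)$ are equivalent, by the same rewriting used there (expressing $V_i^*V_jx$ as $V_i^*(V_jV_iV_i^*x)$ and invoking $V_jV_i\clh = V_iV_j\clh$); the computation of the previous paragraph would then apply verbatim for every pair. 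I expect verifying the equivalence of these two adjoint range conditions to be the one genuinely non--formal step.
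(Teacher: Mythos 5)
Your proof is correct, but it follows a genuinely different route from the paper's. The paper works pointwise and directly with $\clw_\Lambda$: for $\eta \in \clw_\Lambda$ and $h \in \clh$ it uses the range equalities to write $V_jV_ih = V_iV_jh'$ and $V_j^*V_ih = V_iV_j^*h''$, then computes $\langle V_j^*\eta, V_ih\rangle = \langle V_i^*\eta, V_jh'\rangle = 0$ and $\langle V_j\eta, V_ih\rangle = \langle V_i^*\eta, V_j^*h''\rangle = 0$, concluding that $V_j\clw_\Lambda$ and $V_j^*\clw_\Lambda$ lie in $(V_i\clh)^\perp = \clw_i$ for every $i \in \Lambda$, hence in $\clw_\Lambda$; it makes no reduction to a single index and never invokes Lemma~\ref{Equal-Range-Isometry}. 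You instead reduce to a single $\clw_i$, recast the claim as commutation of the projection $V_iV_i^*$ with $V_j$, and extract that commutation from the identity $V_iV_jV_j^*V_i^* = V_jV_iV_i^*V_j^*$ of Lemma~\ref{Equal-Range-Isometry} together with the inclusion $\clr(V_i^*V_j) \subseteq \clr(V_j)$, giving the opposite ordering $i > j$ a separate direct invariance argument. What your version buys is precisely the point the paper glosses over: the paper's proof asserts $V_iV_j^*\clh = V_j^*V_i\clh$ for $i \in \Lambda$, $j \notin \Lambda$, but when $i < j$ the stated hypothesis is $\clr(V_i^*V_j) = \clr(V_jV_i^*)$ (star on the smaller index), so the asserted relation is not literally assumed and requires the equivalence of Lemma~\ref{Equivalent-condition-Range} at $m = n = 1$ --- a step the paper leaves tacit. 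Your case split, or your suggested single-power version of that equivalence, closes exactly this gap; note in this connection that the paper's proof of Lemma~\ref{Equivalent-condition-Range}, specialized to $m = n = 1$, consumes only first-power range equalities, so it is legitimately available here even though the lemma's hypothesis is phrased for all powers. The trade-off is that the paper's argument is shorter and treats the whole intersection $\clw_\Lambda$ in one stroke, while yours is longer but makes the hidden ordering issue explicit and records the reusable operator identity $V_jV_iV_i^* = V_iV_i^*V_j$.
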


\begin{proof}
Since $V_iV_j\clh=V_jV_i\clh$ and $V_iV_j^*\clh=V_j^*V_i\clh$, for every 
$h \in \clh$ there exist $h',h'' \in \clh$ such that $V_jV_ih=V_iV_jh'$ 
and $V_j^*V_ih=V_iV_j^*h''$, respectively.
	
Let $\eta \in \clw_{\Lambda}=\bigcap\limits_{i \in \Lambda} \clw_i$.
For each $ h \in \clh$ and for $ j \in I_n \backslash \Lambda$, 
we have
\begin{align*}
& \langle V_j^*\eta, V_i h \rangle = \langle \eta, V_j V_i h \rangle 
= \langle  \eta, V_iV_j h'\rangle  =  \langle V_i^*\eta, V_j h' \rangle =0 \\
\text{and}   \quad 
& \langle  V_j \eta, V_ih \rangle = \langle  \eta, V_j^* V_i h \rangle  
= \langle  \eta, V_i V_j^*h'' \rangle  =  \langle V_i^*\eta, V_j^*h'' \rangle =0.
\end{align*}
Therefore, for each fixed $j \in I_n \backslash \Lambda$, we obtain
$V_j^*\clw_{\Lambda}, V_j \clw_{\Lambda} \subseteq (V_i \clh)^{\perp}=\clw_i$ 
for all $i \in \Lambda$. Hence $V_j\clw_{\Lambda} \subseteq \clw_{\Lambda}$ 
and  $V_j^*\clw_{\Lambda} \subseteq \clw_{\Lambda}$ for $j \in I_n 
\backslash \Lambda$.  
\end{proof}

\begin{prop}\label{Corollary of intersection form}
Let $(V_1, \ldots, V_n)$ be an $n$-tuple of isometries on $\clh$ such that 
$\clr(V_iV_j)=\clr(V_jV_i)$ and $\clr(V_i^* V_j)=\clr(V_j V_i^*)$ for $1 \leq i < j \leq n$. Then for each subset $\Lambda$ of $I_n$ and $ j \in I_n \backslash \Lambda$,
\[
\clw_{\Lambda} \ominus V_j \clw_{ \Lambda}= \big(\bigcap_{i \in \Lambda} 
\clw_i\big) \cap \clw_j.
\]
\end{prop}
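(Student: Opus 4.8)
The plan is to exploit Proposition~\ref{Multi-Index-Reducing-Prop}, which guarantees that for $j \in I_n \backslash \Lambda$ the subspace $\clw_{\Lambda}$ reduces $V_j$. Once this is in hand, the identity becomes essentially a statement about a single isometry restricted to a reducing subspace, and no further use of the equal-range hypotheses is required.

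First I would record the consequences of the reducing property: since $\clw_{\Lambda}$ reduces $V_j$, both $V_j \clw_{\Lambda} \subseteq \clw_{\Lambda}$ and $V_j^* \clw_{\Lambda} \subseteq \clw_{\Lambda}$, and $V_j|_{\clw_{\Lambda}}$ is an isometry on $\clw_{\Lambda}$ whose adjoint is $V_j^*|_{\clw_{\Lambda}}$. In particular $V_j \clw_{\Lambda}$ is a closed subspace of $\clw_{\Lambda}$, so the orthogonal difference $\clw_{\Lambda} \ominus V_j \clw_{\Lambda}$ is well defined and coincides with the wandering subspace of $V_j|_{\clw_{\Lambda}}$.

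The core of the argument is a double inclusion. For $\subseteq$, take $x \in \clw_{\Lambda}$ with $x \perp V_j \clw_{\Lambda}$. Then $\langle V_j^* x, w \rangle = \langle x, V_j w \rangle = 0$ for every $w \in \clw_{\Lambda}$, so $V_j^* x \perp \clw_{\Lambda}$; but the reducing property forces $V_j^* x \in \clw_{\Lambda}$, whence $V_j^* x = 0$, that is $x \in \cln(V_j^*) = \clw_j$. Combined with $x \in \clw_{\Lambda} = \bigcap_{i \in \Lambda} \clw_i$, this yields $x \in \big( \bigcap_{i \in \Lambda} \clw_i \big) \cap \clw_j$. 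For the reverse inclusion, if $x \in \clw_{\Lambda} \cap \clw_j$ then $V_j^* x = 0$, so $\langle x, V_j w \rangle = \langle V_j^* x, w \rangle = 0$ for all $w \in \clw_{\Lambda}$; hence $x \in \clw_{\Lambda}$ with $x \perp V_j \clw_{\Lambda}$, i.e. $x \in \clw_{\Lambda} \ominus V_j \clw_{\Lambda}$.

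I do not expect a genuine obstacle here: the only delicate point is that $V_j^* x$ must land back inside $\clw_{\Lambda}$ in order to pass from $V_j^* x \perp \clw_{\Lambda}$ to $V_j^* x = 0$, and this is exactly the reducing assertion of Proposition~\ref{Multi-Index-Reducing-Prop}. Equivalently, one may package the entire computation as the standard identity $\clk \ominus W \clk = \cln(W^*)$ for an isometry $W$ on a Hilbert space $\clk$, applied to $W = V_j|_{\clw_{\Lambda}}$ on $\clk = \clw_{\Lambda}$, noting that $\cln\big( (V_j|_{\clw_{\Lambda}})^* \big) = \clw_{\Lambda} \cap \cln(V_j^*) = \clw_{\Lambda} \cap \clw_j$.
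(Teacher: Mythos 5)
Your proposal is correct and follows essentially the same route as the paper's own proof: invoke Proposition~\ref{Multi-Index-Reducing-Prop} to get that $\clw_{\Lambda}$ reduces $V_j$, then establish the two inclusions by the same orthogonality computations (using $V_j^*\clw_{\Lambda}\subseteq\clw_{\Lambda}$ to conclude $V_j^*x=0$ in one direction, and $\langle x, V_j w\rangle=\langle V_j^*x,w\rangle=0$ in the other). The closing remark identifying the result as $\cln\big((V_j|_{\clw_{\Lambda}})^*\big)=\clw_{\Lambda}\cap\clw_j$ is exactly the observation the paper makes at the start of its proof.
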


\begin{proof}
Suppose $j \in I_n \backslash \Lambda$. Following Proposition \ref{Multi-Index-Reducing-Prop}, we have $V_j \clw_{\Lambda} \subseteq \clw_{\Lambda}$ and $V_j ^*\clw_{\Lambda} \subseteq \clw_{\Lambda}$. Thus $V_j|_{\clw_{\Lambda}}$ is an isometry on $\clw_{\Lambda}$ and $\cln( V_j^*|_{\clw_\Lambda}) = \clw_{\Lambda} \ominus V_j \clw_{\Lambda} =\clw_{\Lambda} \cap [V_j \clw_{\Lambda}]^{\perp}$. 
Suppose that $\eta \in \clw_{\Lambda} \ominus V_j \clw_{\Lambda}$; then $\eta \in \clw_{\Lambda}$ and $ \eta \perp V_j \clw_{\Lambda}$. It follows $V_j^* \eta \perp \clw_\Lambda$. But $V_j^* \eta \in \clw_\Lambda$ as $\eta \in \clw_\Lambda$ and 
$ V_j^*\clw_\Lambda \subseteq \clw_{\Lambda}$. Therefore, $V_j^* \eta =0$, and hence $\eta \in \clw_j$. As a consequence $\clw_{\Lambda} \ominus V_j \clw_{\Lambda} \subseteq \clw_{\Lambda} \cap \clw_j$.  
	
Now we shall prove the converse inclusion. To do that, let 
$\eta \in \clw_{\Lambda} \cap \clw_j$ for $j \in  I_n \setminus \Lambda$. Then, 
for each $\eta' \in \clw_{\Lambda}$, we get
\[
\langle \eta, V_j \eta' \rangle = \langle V_j^*\eta, \eta' \rangle=0.
\]
Therefore, $\eta \in \clw_\Lambda \cap [V_j\clw_\Lambda]^{\perp}$, that means, 
$\clw_\Lambda \cap \clw_j \subseteq \clw_{\Lambda} \ominus V_j\clw_{\Lambda}$. 
Hence, for each $ j \in I_n \backslash\Lambda$,
\[
\clw_{\Lambda} \ominus V_j\clw_{\Lambda}=\clw_\Lambda \cap \clw_j 
=\big(\bigcap_{i \in \Lambda}\clw_i\big) \cap \clw_j. 
\]	
This finishes the proof.
\end{proof}

The structure of the decomposition spaces for an $n$-tuple of isometries is obtained in the following result, which will be applied in the subsequent sections. It is also important to note that similar result was also observed by firstly Gaspar and Suciu \cite{GS-Wold-decompositions} in the case of commuting tuples of isometries.

\begin{thm}\label{thm-Multi-isometries}
Let $V=(V_1,\ldots, V_n)$ be an $n$-tuple of isometries on $\clh$ 
such that $\clr(V_i^{m_i}V_j^{m_j})=\clr(V_j^{m_j} V_i^{m_i})$ and $\clr(V_i^{*m_i} V_j^{m_j})= \clr(V_j^{m_j} V_i^{*m_i})$ for $m_i,m_j \in \mathbb{Z}_+$, where $1 \leq i<j \leq n$. Then there exist $2^n$ joint $V$-reducing subspaces $\{\clh_{\Lambda}: \Lambda \subseteq I_n \}$ (counting the trivial subspace $\{0\}$) such that 
\begin{align*}
\clh= \bigoplus_{\substack{ \Lambda \subseteq I_n }} \clh_{\Lambda}
\end{align*}
where for each $ \Lambda \subseteq I_n $,
\begin{align}\label{equ-subspace}
\clh_{\Lambda}= \big(\bigcap_{i \in \Lambda} \clh_{si} \big) \cap  \big( \bigcap_{j \in I_n \backslash \Lambda} \clh_{uj}\big),
\end{align}
where $\clh_{si}$ and $\clh_{ui}$ are the shift part and unitary part in the {\textit {Wold decomposition}} for each $V_i$. Also for $\clh_{\Lambda} \neq \{0\}$, $V_i|_{\clh_{\Lambda}}$ is a shift if $i \in  \Lambda$ and $V_j|_{\clh_{\Lambda}}$ is unitary if $j \in I_n \backslash \Lambda$.  Moreover, the above decomposition is unique.
\end{thm}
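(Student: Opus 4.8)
The plan is to decouple the problem coordinate by coordinate and then glue the resulting decompositions together. For each fixed $i$ I would invoke the single-variable Wold decomposition (Theorem \ref{thm-Wold}) of $V_i$, writing $\clh = \clh_{si} \oplus \clh_{ui}$ with unitary part $\clh_{ui} = \bigcap_{m \ge 0} V_i^m \clh$ and shift part $\clh_{si} = \bigoplus_{m \ge 0} V_i^m \clw_i$. The theorem then amounts to showing that these $n$ orthogonal decompositions are mutually compatible, so that the $2^n$ intersections in \eqref{equ-subspace} reduce the whole tuple and resolve the identity.

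The compatibility rests on Corollary \ref{Reducing-subspace-corollary}. For each pair I would check its hypotheses for $(V_i, V_j)$: the condition $\clr(V_j V_i^m) = \clr(V_i^m V_j)$ follows at once by setting one exponent equal to $1$ in the equal-range hypothesis, while the adjoint condition $\clr(V_j^* V_i^m) = \clr(V_i^m V_j^*)$ holds directly when $j < i$ and, when $i < j$, is obtained from $\clr(V_i^{*m} V_j) = \clr(V_j V_i^{*m})$ by passing to the equivalent form supplied by Lemma \ref{Equivalent-condition-Range}. Corollary \ref{Reducing-subspace-corollary} then shows that both $\clh_{ui}$ and $\clh_{si}$ reduce every $V_j$, so the orthogonal projection $P_{ui}$ onto $\clh_{ui}$ commutes with each $V_k$ and with each $V_k^*$.

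Next I would prove that the $P_{ui}$ commute with one another. Because $P_{ui}$ commutes with $V_j$ and $V_j^*$, it commutes with every range projection $V_j^m V_j^{*m}$; letting $m \to \infty$ in the strong operator topology and using $V_j^m V_j^{*m} \to P_{uj}$ gives $P_{ui} P_{uj} = P_{uj} P_{ui}$. Thus $\{P_{ui}, P_{si} := I - P_{ui}\}$ is a commuting family of projections, each lying in the commutant of $\{V_1, V_1^*, \dots, V_n, V_n^*\}$. Expanding $I = \prod_{i=1}^n (P_{ui} + P_{si})$ and using $P_{ui} P_{si} = 0$ yields an orthogonal resolution $I = \sum_{\Lambda \subseteq I_n} Q_\Lambda$, where $Q_\Lambda = \prod_{i \notin \Lambda} P_{ui} \prod_{j \in \Lambda} P_{sj}$ has range exactly the subspace $\clh_\Lambda$ of \eqref{equ-subspace}; this gives $\clh = \bigoplus_{\Lambda} \clh_\Lambda$ with each $\clh_\Lambda$ joint $V$-reducing. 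On a nonzero $\clh_\Lambda$, for $i \notin \Lambda$ the inclusion $\clh_\Lambda \subseteq \clh_{ui}$ makes $V_i|_{\clh_\Lambda}$ a restriction of a unitary to a reducing subspace, hence unitary; for $j \in \Lambda$ the inclusion $\clh_\Lambda \subseteq \clh_{sj}$ gives $(V_j|_{\clh_\Lambda})^{*m} = V_j^{*m}|_{\clh_\Lambda} \to 0$ strongly, so $V_j|_{\clh_\Lambda}$ is a shift.

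For uniqueness I would take any decomposition $\clh = \bigoplus_\Lambda \clk_\Lambda$ with the stated properties and regroup: $\bigoplus_{\Lambda : i \notin \Lambda} \clk_\Lambda$ is a unitary reducing part and $\bigoplus_{\Lambda : i \in \Lambda} \clk_\Lambda$ a shift (pure) reducing part for $V_i$, so by the uniqueness clause of Theorem \ref{thm-Wold} they must equal $\clh_{ui}$ and $\clh_{si}$ respectively; intersecting these identities over all $i$ forces $\clk_\Lambda = \clh_\Lambda$. The main obstacle is the compatibility step: a priori the Wold subspaces of $V_i$ are defined using $V_i$ alone and need not interact with $V_j$, and it is exactly the two-sided equal-range hypotheses --- funnelled through Corollary \ref{Reducing-subspace-corollary} and Lemma \ref{Equivalent-condition-Range} --- that force each $\clh_{ui}$ to reduce every $V_j$ and make the family $\{P_{ui}\}$ commute. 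Once this commutativity is established, the direct-sum decomposition, the unitary/shift dichotomy, and uniqueness follow routinely from the one-variable theory.
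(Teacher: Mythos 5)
Your proof is correct, and it reaches the decomposition by a genuinely different route than the paper. Both arguments begin identically: apply the one-variable Wold decomposition (Theorem \ref{thm-Wold}) to each $V_i$ and use Corollary \ref{Reducing-subspace-corollary} to see that every $\clh_{ui}$ and $\clh_{si}$ reduces every $V_k$ (you are in fact more careful than the paper here, noting that for $i<j$ the needed condition $\clr(V_j^*V_i^m)=\clr(V_i^m V_j^*)$ must be extracted via Lemma \ref{Equivalent-condition-Range}). From there the paper proceeds by induction on coordinates: it applies the Wold decomposition to each restriction $V_k|_{\clh_\Lambda}$, shows that its unitary part lies in $\clh_{uk}\cap\clh_\Lambda$ and its shift part in $\clh_{sk}\cap\clh_\Lambda$, hence $\clh_\Lambda\subseteq\clh_{\tilde{\Lambda}}\oplus\clh_{\tilde{\Lambda}\cup\{k\}}$, and iterates this splitting starting from $\clh=\clh_{\emptyset}\oplus\clh_{\{1\}}$. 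You instead pass to the projections $P_{ui}$, prove they commute pairwise via the strong limit $V_j^mV_j^{*m}\to P_{uj}$ of decreasing range projections, and expand $I=\prod_i(P_{ui}+P_{si})$ into the orthogonal resolution $\sum_\Lambda Q_\Lambda$; all steps in this argument are sound. Your operator-algebraic route is shorter, avoids the $\tilde{\Lambda}$ bookkeeping and the subset induction, and --- notably --- delivers the uniqueness assertion cleanly, by regrouping any competing decomposition into unitary and pure parts for each $V_i$ and invoking the uniqueness clause of the one-variable theorem; the paper's proof, as written, never actually addresses uniqueness. What the paper's iterated-restriction method buys in exchange is that it rehearses exactly the technique later used in Sections 4 and 5 to obtain the explicit wandering-subspace representation of $\clh_\Lambda$, which the projection argument by itself does not produce.
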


\begin{proof}
Assume that $V=(V_1,\ldots, V_n)$ is an $n$-tuple of isometries on $\clh$ 
such that $\clr(V_i^{m_i}V_j^{m_j})=\clr(V_j^{m_j} V_i^{m_i})$ and $\clr(V_i^{*m_i} V_j^{m_j})= \clr(V_j^{m_j} V_i^{*m_i})$ for $m_i,m_j \in \mathbb{Z}_+$, where $1 \leq i<j \leq n$. Suppose $\clh=\clh_{ui} \oplus \clh_{si}$ is the {\textit {Wold decomposition}} for each isometry $V_i$ such that $V_i|_{\clh_{ui}}$ is unitary and $V_i|_{\clh_{si}}$ is shift for $i=1,2,\dots,n$. Now the subspaces $\clh_{ui}$ and $\clh_{si}$ reduce the isometries $V_k$ for each $i,k=1,2, \dots, n$ by Corollary \ref{Reducing-subspace-corollary} and Lemma \ref{Equivalent-condition-Range}.

For	each $\Lambda \subseteq I_n$, we define 
\[
\clh_{\Lambda}= \big(\bigcap_{i \in \Lambda} \clh_{si} \big) \cap \big( \bigcap_{j \in I_n \backslash \Lambda} \clh_{uj}\big).
\]
Clearly, $\bigoplus\limits_{\Lambda \subseteq I_n}\clh_{\Lambda} \subseteq \clh$. Thus
it is enough to show that 
\[
\clh= \clh_{u1} \oplus \clh_{s1}\subseteq \bigoplus_{\Lambda \subseteq  I_n} \clh_{\Lambda}.
\]
Since $\clh_{ui}, \clh_{si}$ reduce $V_k$, the space $\clh_{\Lambda}$ reduces $V_k$ for $k=1,2,\dots,n$. Also $\cln (V_k^*|_{\clh_{\Lambda}})= \cln (V_k^*) \cap \clh_{\Lambda}$. Therefore, by Wold decomposition for the isometry $V_k|_{\clh_{\Lambda}}$, we obtain 
\begin{align*}
\clh_{\Lambda}
&=	[\bigcap_{m_k \in \mathbb{Z}_+} V_k^{m_k} \clh_{\Lambda}] \oplus [\bigoplus_{m_k \in \mathbb{Z}_+} V_k^{m_k}(\cln (V_k^*) \cap \clh_{\Lambda})].
\end{align*}
Let $\Lambda \subseteq I_p \subsetneq I_n$ and $k \notin I_p$. It is easy to see that
\begin{align} \label{eq-1}
\bigcap_{m_k \in \mathbb{Z}_+} V_k^{m_k}\clh_{\Lambda} \subseteq (\bigcap_{m_k \in \mathbb{Z}_+} V_k^{m_k}\clh ) \cap \clh_{\Lambda} =  \clh_{uk} \cap \clh_{\Lambda} = \clh_{\tilde{\Lambda}},
\end{align}
where $\tilde{\Lambda}=\Lambda$ treated as a subset of $I_p \cup \{k\}$. Again for each 
$m_k \in \mathbb{Z}_+$ 
\[
V_k^{m_k}(\cln (V_k^*) \cap \clh_{\Lambda}) \subseteq V_k^{m_k} \clh_{\Lambda}\subseteq \clh_{\Lambda}
\]
and 
\[
\bigoplus_{m_k \in \mathbb{Z}_+} V_k^{m_k}(\cln (V_k^*) \cap \clh_{\Lambda}) \subseteq \bigoplus_{m_k \in \mathbb{Z}_+} V_k^{m_k} \cln (V_k^*)=\clh_{sk}.
\]
The above two inclusions give
\begin{align}\label{eq-2}
\bigoplus_{m_k \in \mathbb{Z}_+} V_k^{m_k}(\cln (V_k^*) \cap \clh_{\Lambda}) \subseteq \clh_{\Lambda} \cap \clh_{sk}=\clh_{\tilde{\Lambda} \cup \{k\}}.
\end{align}
Therefore, equations \eqref{eq-1} and \eqref{eq-2} imply that $\clh_{\Lambda} \subseteq \clh_{\tilde{\Lambda} \cup \{k\}} \oplus \clh_{\tilde{\Lambda}}$.

In the similar way, for $\Lambda \subseteq I_{p} \subsetneq I_n$ with $k, l \notin I_p$, we have 
\[
\clh_{\Lambda} \subseteq \clh_{\tilde{\Lambda} \cup \{k\} \cup \{l\}} \oplus \clh_{\tilde{\Lambda} \cup \{k\}} \oplus \clh_{\tilde{\Lambda} \cup \{l\}} \oplus 
\clh_{\tilde{\Lambda}}, 
\]
where $\tilde{\Lambda}=\Lambda $ but we consider it as a subset of $I_p \cup \{k,l\}$. Applying Wold decomposition for $V_1$ on $\clh$, we have
\[
\clh=\clh_{u1} \oplus \clh_{s1}= \clh_{\emptyset} \oplus \clh_{\{1\}},
\]
where $\emptyset, \{1\} \subseteq I_1=\{1\}$. Using the above process repeatedly to 
$\clh_{\emptyset}$ and $\clh_{\{ 1 \} }$, we obtain
\begin{align*}
\clh_{\emptyset} \subseteq \bigoplus_{\Lambda \subseteq \Lambda'} \clh_{\Lambda}
\quad \text{and} \quad \clh_{\{1\}} \subseteq \bigoplus_{\Lambda \subseteq \Lambda'}	\clh_{ \{\tilde{1}\} \cup\Lambda}
\end{align*}
where $\Lambda'=\{2,3,\dots,n\}$ and $\{\tilde{1}\}= \{1\}$ but treated as a subset of $I_n$. Therefore,
\begin{align*}
\clh &= \clh_{\emptyset} \oplus \clh_{\{1\}} \subseteq (\bigoplus_{\Lambda \subseteq \Lambda'} \clh_{\Lambda}) \oplus (\bigoplus_{\Lambda \subseteq \Lambda'}	\clh_{\{\tilde{1}\} \cup \Lambda} )	= \bigoplus_{\Lambda \subseteq I_n} \clh_{\Lambda}.  
\end{align*}
Hence $\clh=\bigoplus\limits_{\Lambda \subseteq I_n} \clh_{\Lambda}$; and $V_j|_{\clh_{\Lambda}}$ is unitary for $ j \in I_n \backslash \Lambda$ and $V_i|_{\clh_{\Lambda}}$ is shift for $i \in \Lambda$. 

Finally, the uniqueness of this decomposition follows from the uniqueness of Wold decomposition for an isometry.
\end{proof}

\begin{rem}
In the preceding result, the structure of decomposition spaces is motivated by the proof of Theorem 2.1 in [12], that characterizes any tuple of isometries admitting {\textit{Wold decomposition}}. We recognize that the structure of the decomposition space in the intersection form presented above is appropriate for any given $n$-tuples of {\textit{isometries with equal range}} which is a bigger class than the tuples of doubly non-commuting isometries and $\clu_n$-twisted isometries.
\end{rem}

\section{Decomposition for pairs of isometries}

In this section, we focus on the representation of decomposition spaces for pairs of {\textit{isometries with equal range}}. It is challenging to express the representation of the decomposition spaces described in Theorem \ref{thm-Multi-isometries}.

Let $(V_1, V_2)$ be a pair of isometries on $\clh$ such that $\clr(V_1^m V_2^n)=\clr(V_2^n V_1^m)$ and $\clr(V_1^{*m} V_2^n)= \clr(V_2^n V_1^{*m})$ for $m,n \in \mathbb{Z}_+$. Also recall that $\clw_i = \cln(V_i^*)$ for $i=1, 2$. Then, a unique decomposition follows from Theorem \ref{thm-Multi-isometries}, and from the equation \eqref{equ-subspace}, the decomposition spaces become
\begin{align*}
\clh_{\emptyset}&=\clh_{u1} \cap \clh_{u2}= \big(\bigcap_{m \in \mathbb{Z}_+} V_1^m \clh \big) \cap \big(\bigcap_{n \in \mathbb{Z}_+} V_2^n \clh\big),\\
\clh_{\{1\}} &= \clh_{s1} \cap \clh_{u2} =  \big(\bigoplus_{m \in \mathbb{Z}_+} V_1^m \clw_1 \big) \cap \big(\bigcap_{n \in \mathbb{Z}_+} V_2^n \clh\big),  \\
\clh_{\{2\}} &=\clh_{u1} \cap \clh_{s2}=\big(\bigcap_{m \in \mathbb{Z}_+} V_1^m \clh \big) \cap  \big(\bigoplus_{n \in \mathbb{Z}_+} V_2^n \clw_2 \big),\\
\clh_{\{1,2\}} &=\clh_{s1} \cap \clh_{s2}=\big(\bigoplus_{m \in \mathbb{Z}_+} V_1^m \clw_1 \big) \cap  \big(\bigoplus_{n \in \mathbb{Z}_+} V_2^n \clw_2 \big) .
\end{align*}
Now our aim is to obtain the complete structure of the orthogonal spaces. Note that
\[
\bigcap_{m,n \in \mathbb{Z}_+} V_1^m V_2^n\clh \subseteq \big(\bigcap\limits_{m \in \mathbb{Z}_+} V_1^m \clh \big) \cap \big(\bigcap\limits_{n \in \mathbb{Z}_+} V_2^n \clh\big).
\]
For the converse part, let $h_{uu} \in  \clh_{\emptyset}=\big(\bigcap\limits_{m \in \mathbb{Z}_+} V_1^m \clh \big) \cap \big(\bigcap\limits_{n \in \mathbb{Z}_+} V_2^n \clh\big)$. Then, for each fixed $n \in \mathbb{Z}_+$,
\begin{align*}
V_2^{*n}h_{uu} \in V_2^{*n}(\bigcap_{m \in \mathbb{Z}_+} V_1^m \clh ) \subseteq 
\bigcap_{m \in \mathbb{Z}_+}V_2^{*n} V_1^m  \clh=\bigcap_{m \in \mathbb{Z}_+} V_1^m V_2^{*n} \clh= \bigcap_{m \in \mathbb{Z}_+} V_1^m \clh. 
\end{align*}
Again $V_2|_{\clh_{\emptyset}}$ is unitary on $\clh_{\emptyset}$. It follows that 
for every fixed $n \in \mathbb{Z}_+$
\[
h_{uu} = V_2^n V_2^{*n}h_{uu} \in \bigcap_{m \in \mathbb{Z}_+} V_2^nV_1^m \clh =\bigcap_{m \in \mathbb{Z}_+} V_1^m V_2^n \clh. 
\]
Hence
\begin{equation}\label{eq-unitary pair}
\clh_{\emptyset}= \big(\bigcap_{m \in \mathbb{Z}_+} V_1^m \clh \big) \cap \big(\bigcap_{n \in \mathbb{Z}_+} V_2^n \clh\big)= \bigcap_{m,n\in \mathbb{Z}_+} V_1^mV_2^n \clh.
\end{equation}
Now $\bigcap\limits_{n \in \mathbb{Z}_+} V_2^n \clw_1 \subseteq \clw_1$ and 
$\bigoplus\limits_{m \in \mathbb{Z}_+} V_1^m \clw_1 \subseteq \clh$ implies that 
$\bigoplus\limits_{m \in \mathbb{Z}_+} V_1^m(\bigcap\limits_{n \in \mathbb{Z}_+} V_2^n \clw_1) \subseteq \bigoplus\limits_{m \in \mathbb{Z}_+} V_1^m\clw_1$ and
$\bigcap\limits_{n \in \mathbb{Z}_+} V_2^n(\bigoplus\limits_{m \in \mathbb{Z}_+} V_1^m \clw_1) \subseteq \bigcap\limits_{n \in \mathbb{Z}_+} V_2^n \clh$. But
$\bigoplus\limits_{m \in \mathbb{Z}_+} V_1^m(\bigcap\limits_{n \in \mathbb{Z}_+} V_2^n \clw_1) = \bigcap\limits_{n \in \mathbb{Z}_+} V_2^n(\bigoplus\limits_{m \in \mathbb{Z}_+} V_1^m \clw_1)$ and hence
\[
\bigoplus_{m \in \mathbb{Z}_+} V_1^m(\bigcap_{n \in \mathbb{Z}_+} V_2^n \clw_1) \subseteq \big(\bigoplus_{m \in \mathbb{Z}_+} V_1^m \clw_1 \big) \cap \big(\bigcap_{n \in \mathbb{Z}_+} V_2^n \clh\big)  
=\clh_{\{1\}}.
\]	
Conversely, let $h_{su} \in \clh_{\{1\}}= \big(\bigoplus\limits_{m \in \mathbb{Z}_+} V_1^m \clw_1 \big) \cap \big(\bigcap\limits_{n \in \mathbb{Z}_+} V_2^n \clh\big)  $. Then
\[
V_2^{*n}h_{su} \in V_2^{*n}(\bigoplus\limits_{m \in \mathbb{Z}_+} V_1^m \clw_1 \big).
\] 
Now using the conditions $V_1^m V_2^n\clh =V_2^nV_1^m \clh$ and $V_1^{*m} V_2^n \clh= V_2^n V_1^{*m}\clh$ and Lemma \ref{Equivalent-condition-Range}, for each fixed $m, n \in \mathbb{Z}_+$, we have 
\[
V_2^{*n} V_1^m \clw_1 = V_2^{*n} V_1^m (\clh \ominus V_1\clh) 
= V_1^m V_2^{*n} \clh \ominus V_1^{m+1} V_2^{*n}\clh = V_1^m (\clh \ominus V_1\clh) = V_1^m \clw_1.
\]
Therefore, 
\[
V_2^{*n} h_{su} \in \bigoplus\limits_{m \in \mathbb{Z}_+}  V_1^m \clw_1.
\]
Since $V_2|_{\clh_{\{1\}}}$ is unitary, for each fixed $n \in \mathbb{Z}_+$, we get
\begin{align*}
h_{su}= V_2^n V_2^{*n}h_{su} \in \bigoplus\limits_{m \in \mathbb{Z}_+} V_2^n V_1^m \clw_1 =\bigoplus\limits_{m \in \mathbb{Z}_+} V_1^m V_2^n \clw_1.
\end{align*}
Therefore, 
\begin{align}
\clh_{\{1\}} = \big(\bigoplus_{m \in \mathbb{Z}_+} V_1^m \clw_1 \big) \cap \big(\bigcap_{n \in \mathbb{Z}_+} V_2^n \clh\big) 
&= \bigcap_{n \in \mathbb{Z}_+} \big(\bigoplus\limits_{m \in \mathbb{Z}_+} 
V_1^m V_2^n \clw_1 \big)\nonumber \\ 
&=	\bigoplus\limits_{m \in \mathbb{Z}_+} V_1^m  \big(\bigcap_{n \in \mathbb{Z}_+} V_2^n \clw_1 \big).
\end{align}
Similarly, 
\begin{equation}
\clh_{\{2\}} =\big(\bigcap_{m \in \mathbb{Z}_+} V_1^m \clh \big) \cap  \big(\bigoplus_{n \in \mathbb{Z}_+} V_2^n \clw_2 \big)=	\bigoplus\limits_{n \in \mathbb{Z}_+} V_2^n  \big(\bigcap_{m \in \mathbb{Z}_+} V_1^m \clw_2 \big).
\end{equation}
Finally, it is easy to see that
\[
\bigoplus_{m,n \in \mathbb{Z}_+} V_1^m V_2^n (\clw_1 \cap \clw_2) \subseteq \big(\bigoplus_{m \in \mathbb{Z}_+} V_1^m \clw_1 \big) \cap  \big(\bigoplus_{n \in \mathbb{Z}_+} V_2^n \clw_2 \big)=\clh_{\{1,2\}}.
\]
To prove the reverse direction, let $h_{ss} \in \clh_{\{1,2\}}= \dsp \big(\bigoplus_{m \in \mathbb{Z}_+} V_1^m \clw_1 \big) \cap  \big(\bigoplus_{n \in \mathbb{Z}_+} V_2^n \clw_2 \big)$. Then 
\[
h_{ss} \in \bigoplus_{m \in \mathbb{Z}_+} V_1^m \clw_1.
\]
Now, using the relation $(I_{\clh}-V_2^n V_2^{*n})V_1^m\clw_1= V_1^m(I_{\clh}-V_2^n V_2^{*n}) \clw_1$ for each fixed $n \in \mathbb{Z}_+$, we obtain
\begin{align*}
(I_{\clh}-V_2^n V_2^{*n})h_{ss} \in (I_{\clh}-V_2^n V_2^{*n}) \Big(\bigoplus_{m \in \mathbb{Z}_+} V_1^m \clw_1 \Big)
&= \bigoplus_{m \in \mathbb{Z}_+}(I_{\clh}-V_2^n V_2^{*n}) V_1^m \clw_1 \\
&= \bigoplus_{m \in \mathbb{Z}_+}V_1^m (I_{\clh}-V_2^n V_2^{*n}) \clw_1.
\end{align*}
Again $V_2|_{\clw_1}$ is an isometry, and thus for each fixed $n \in \mathbb{Z}_{+}$ 
\begin{align*}
(I_{\clh}-V_2^{n}V_2^{*n})\clw_1
&= \clw_1 \ominus V_2^{ n} \clw_1 \\
&=[\clw_1 \ominus V_2 \clw_1] \oplus \cdots \oplus V_2^{ n-1} [\clw_1 \ominus V_2 \clw_1]\\
&= (\clw_1 \cap \clw_2) \oplus \cdots \oplus V_2^{ n-1}( \clw_1 \cap \clw_2) \\
&= \bigoplus_{k=0}^{ n-1}  V_2^{k}( \clw_1 \cap \clw_2),
\end{align*}	
as Corollary \ref{Corollary of intersection form} gives $\clw_1 \ominus V_2 \clw_1= \clw_1 \cap \clw_2$. Consequently,
\[
(I_{\clh}-V_2^n V_2^{*n})h_{ss}  \in \bigoplus_{m=0}^{\infty} V_1^m \Big(\bigoplus_{k=0}^{ n-1} V_2^{k}( \clw_1 \cap \clw_2)\Big).
\]
As $\lim\limits_{n \rightarrow\infty}V_2^{*n}|_{\clh_{\{1, 2\}}}=0$ in the strong operator topology, letting $n \rightarrow \infty$, we obtain
\begin{align*}
h_{ss} \in \bigoplus_{m=0}^{\infty} V_1^m \Big(\bigoplus_{n=0}^{ \infty}  V_2^{n}( \clw_1 \cap \clw_2)\Big)=\bigoplus_{m,n=0}^{\infty} V_1^m V_2^n (\clw_1 \cap \clw_2).
\end{align*}
Hence,
\begin{equation}\label{eq-shift pair}
\clh_{\{1,2\}}= \big(\bigoplus_{m \in \mathbb{Z}_+} V_1^m \clw_1 \big) \cap  \big(\bigoplus_{n \in \mathbb{Z}_+} V_2^n \clw_2 \big) =\bigoplus_{m,n \in \mathbb{Z}_+} V_1^m V_2^n (\clw_1 \cap \clw_2).
\end{equation}

To summarize the above, we have the following result:
	
\begin{thm}\label{Wold-type-pairs-isomteries}
Let $\left( V_1, V_2 \right)$ be a pair of isometries on a Hilbert space $\clh$ such that $\clr(V_1^m V_2^n)= \clr(V_2^n V_1^m)$ and $\clr(V_1^{*m} V_2^n)= \clr(V_2^n V_1^{*m})$ for $m,n \in \mathbb{Z}_{+}$. Then there is a unique decomposition 
\[
\mathcal{H}=\mathcal{H}_{uu} \oplus \mathcal{H}_{us} \oplus \mathcal{H}_{su} 
\oplus \mathcal{H}_{ss},
\]
where $\mathcal{H}_{uu}, \mathcal{H}_{us}, \mathcal{H}_{su},$ and 
$\mathcal{H}_{ss}$ are the subspaces reducing $V_1,  V_2$ such that
\begin{itemize}
\item	$V_1|_{\mathcal{H}_{uu}}, V_2|_{\mathcal{H}_{uu}}$ are unitary operators,
		
\item   $V_1|_{\mathcal{H}_{us}}$ is unitary, $V_2|_{\mathcal{H}_{us}}$ is a shift,
		
\item  $V_1|_{\mathcal{H}_{su}}$ is shift, $V_2|_{\mathcal{H}_{su}}$ is unitary, and 
		
\item $V_1|_{\mathcal{H}_{ss}}$, $V_2|_{\mathcal{H}_{ss}}$ are shifts. 
\end{itemize} 	
Moreover,
\begin{align*}
&\clh_{uu} = \bigcap_{ m, n =0}^{\infty} V_1^{ m} V_2^{ n} \clh,  
&\clh_{us}= \bigoplus_{ n=0}^{\infty} V_2^{n}
	\Big (\bigcap_{ m=0}^{\infty} V_1^{ m}   \cln(V_2^*) \Big),\\
& \clh_{su}=\bigoplus_{ m=0}^{\infty}  V_1^{ m} 
	\Big( \bigcap_{ n=0}^{\infty} V_2^{ n}  \cln (V_1^*)  \Big), 
&\clh_{ss}= \bigoplus_{ m, n=0}^{\infty}  V_1^{ m}  V_2^{ n}  
	\Big( \cln(V_1^*) \cap \cln(V_2^*) \Big).
\end{align*}
\end{thm}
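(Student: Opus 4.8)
The plan is to reduce everything to the case $n=2$ of Theorem \ref{thm-Multi-isometries} and then to convert the abstract intersection description \eqref{equ-subspace} into the four explicit expressions displayed in the statement. First I would apply Theorem \ref{thm-Multi-isometries} with $n=2$: this at once produces four pairwise orthogonal joint $(V_1,V_2)$-reducing subspaces $\clh_{\{\emptyset\}},\clh_{\{1\}},\clh_{\{2\}},\clh_{\{1,2\}}$ whose orthogonal sum is $\clh$, together with uniqueness and the fact that $V_i|_{\clh_\Lambda}$ is a shift exactly when $i\in\Lambda$. Relabelling according to which of $V_1,V_2$ acts as a shift gives $\clh_{uu}=\clh_{\{\emptyset\}}$, $\clh_{su}=\clh_{\{1\}}$, $\clh_{us}=\clh_{\{2\}}$, $\clh_{ss}=\clh_{\{1,2\}}$, which settles the reducing property and the unitary/shift behaviour in the four bullet points.

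It then remains to identify each $\clh_\Lambda$ with its closed form. For $\clh_{uu}$ the inclusion $\bigcap_{m,n}V_1^mV_2^n\clh\subseteq(\bigcap_m V_1^m\clh)\cap(\bigcap_n V_2^n\clh)$ is immediate; for the reverse I would take $h$ in the right-hand side, move $V_2^{*n}$ through the intersection over $m$ using $V_2^{*n}V_1^m\clh=V_1^mV_2^{*n}\clh$ (available from the hypotheses via Lemma \ref{Equivalent-condition-Range}), and then recover $h=V_2^nV_2^{*n}h$ from unitarity of $V_2$ on $\clh_{uu}$. The two mixed spaces are symmetric: the key point is that the range identities force $V_2^{*n}V_1^m\clw_1=V_1^m\clw_1$, so $V_2^{*n}$ preserves $\bigoplus_m V_1^m\clw_1$, and unitarity of $V_2$ on $\clh_{su}$ then yields $\clh_{su}=\bigoplus_m V_1^m\big(\bigcap_n V_2^n\cln(V_1^*)\big)$; the formula for $\clh_{us}$ follows by interchanging the roles of $V_1$ and $V_2$.

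The main obstacle will be the doubly-shift space $\clh_{ss}$, where neither operator is unitary and $V_2^{*n}$ cannot be inverted. Here I would take $h\in\clh_{ss}\subseteq\bigoplus_m V_1^m\clw_1$ and apply $I_\clh-V_2^nV_2^{*n}$, using the commutation $(I_\clh-V_2^nV_2^{*n})V_1^m\clw_1=V_1^m(I_\clh-V_2^nV_2^{*n})\clw_1$ to pull it inside the sum over $m$. The crucial structural input is the wandering-subspace identity $\clw_1\ominus V_2\clw_1=\clw_1\cap\clw_2$ from Proposition \ref{Corollary of intersection form}, which telescopes to $(I_\clh-V_2^nV_2^{*n})\clw_1=\bigoplus_{k=0}^{n-1}V_2^k(\clw_1\cap\clw_2)$ and so places $(I_\clh-V_2^nV_2^{*n})h$ in $\bigoplus_m V_1^m\big(\bigoplus_{k=0}^{n-1}V_2^k(\clw_1\cap\clw_2)\big)$. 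Letting $n\to\infty$ and invoking $V_2^{*n}|_{\clh_{ss}}\to0$ strongly (since $V_2$ is a shift there) gives $h\in\bigoplus_{m,n}V_1^mV_2^n(\clw_1\cap\clw_2)$; the reverse inclusion is immediate, proving the last formula. The uniqueness asserted in the theorem is inherited directly from Theorem \ref{thm-Multi-isometries}.
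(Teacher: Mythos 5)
Your proposal is correct and takes essentially the same route as the paper: it too applies Theorem \ref{thm-Multi-isometries} with $n=2$ and then identifies the four intersection spaces exactly as you describe --- the $V_2^{*n}$-plus-unitarity argument (via Lemma \ref{Equivalent-condition-Range} and the identity $V_2^{*n}V_1^m\clw_1=V_1^m\clw_1$) for $\clh_{uu}$, $\clh_{su}$, $\clh_{us}$, and the defect-operator argument using $\clw_1\ominus V_2\clw_1=\clw_1\cap\clw_2$ from Proposition \ref{Corollary of intersection form} together with $V_2^{*n}\to 0$ strongly on $\clh_{ss}$. No gaps; uniqueness is likewise inherited from Theorem \ref{thm-Multi-isometries} in both treatments.
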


\begin{rem}
Note that the class of pairs of isometries $(V_1, V_2)$ with equal range, i.e.,
$\clr(V_1^m V_2^n)= \clr(V_2^n V_1^m)$ and $\clr(V_1^{*m} V_2^n)= \clr(V_2^n V_1^{*m})$ for $m,n \in \mathbb{Z}_{+}$ contains the class of $\clu_2$-twisted isometries (in particular, class of pairs of doubly commuting isometries). However, the orthogonal decomposition spaces for the pairs of {\textit{isometries with equal range}} are the same as the orthogonal decomposition spaces for the pairs of doubly commuting isometries, which were obtained by S\l{}o\'{c}inski in \cite{SLOCINSKI-WOLD}. Moreover, the decomposition is unique in both cases.
\end{rem}

\section{Decomposition for $n$-tuples of isometries}

This section deals with {\it{Wold decomposition}} for $n$-tuples of {\textit{isometries with equal range}}. Here, we find the explicit representation of the orthogonal decomposition spaces.

We begin with a few significant findings that will be helpful in the follow-up.

\begin{lem}\label{Important Lemma for n-tuples}
Let $V=(V_1,\dots, V_n)$ be an n-tuple of isometries on $\clh$ such that $\clr(V_{i}^{m_{i}}V_{j}^{m_{j}})=\clr(V_{j}^{m_{j}} V_{i}^{m_{i}})$ and $\clr(V_{i}^{*m_{i}}V_{j}^{m_{j}})= \clr(V_{j}^{m_{j}} V_{i}^{*m_{i}})$ for $m_{i},m_{j} \in \mathbb{Z}_+$, where $1 \leq i< j \leq n$. Then
\begin{align*}
&V_{i_1}^{m_{i_1}} V_{i_2}^{m_{i_2}}\cdots V_{i_{n}}^{m_{i_{n}}}\clh
=V_{i_2}^{m_{i_2}}V_{i_3}^{m_{i_3}} \cdots V_{i_{n}}^{m_{i_{n}}} V_{i_1}^{m_{i_1}}\clh\\	
\text{and}  \quad \quad
&V_{i_1}^{*m_{i_1}} V_{i_2}^{m_{i_2}}\cdots V_{i_{n}}^{m_{i_{n}}}\clh
=V_{i_2}^{m_{i_2}}V_{i_3}^{m_{i_3}} \cdots V_{i_{n}}^{m_{i_{n}}}
V_{i_1}^{*m_{i_1}}\clh,
\end{align*}	
where $m_{i_j} \in \mathbb{Z}_+$ for $i_j \in \{1,2, \dots,n\}$.
\end{lem}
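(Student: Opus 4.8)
The plan is to prove the two displayed identities (call them the \emph{positive} and the \emph{adjoint} rotation identities) simultaneously by induction on $n$, the number of factors. The base case $n=2$ is exactly the pairwise hypotheses $\clr(V_{i_1}^{m_{i_1}}V_{i_2}^{m_{i_2}})=\clr(V_{i_2}^{m_{i_2}}V_{i_1}^{m_{i_1}})$ and $\clr(V_{i_1}^{*m_{i_1}}V_{i_2}^{m_{i_2}})=\clr(V_{i_2}^{m_{i_2}}V_{i_1}^{*m_{i_1}})$. The only genuine difficulty in the inductive step is that the hypotheses are equalities of ranges on the \emph{whole} space $\clh$, whereas to push $V_{i_1}^{m_{i_1}}$ (or its adjoint) one place to the right inside a longer product one must swap it with $V_{i_2}^{m_{i_2}}$ while the tail $R:=V_{i_3}^{m_{i_3}}\cdots V_{i_n}^{m_{i_n}}$ is applied to the \emph{proper} subspace $R\,\clh$ rather than to $\clh$. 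This ``transfer to a subspace'' is the main obstacle, and it is precisely what the reducing-subspace Lemmas \ref{Reducing-subspace-with-equal-range} and \ref{Lemma- Main result} are designed to supply, \emph{provided} $R\,\clh$ reduces both $V_{i_1}$ and $V_{i_2}$.

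So the first step is an auxiliary claim, proved on its own by induction on the number of factors and using only the pairwise hypotheses: \emph{if $a\notin\{c_1,\dots,c_s\}$ then the closed subspace $\clr(V_{c_1}^{r_1}\cdots V_{c_s}^{r_s})$ reduces $V_a$.} The case $s=1$ is where both hypotheses enter: $\clr(V_aV_{c_1}^{r_1})=\clr(V_{c_1}^{r_1}V_a)\subseteq V_{c_1}^{r_1}\clh$ gives $V_a$-invariance of $V_{c_1}^{r_1}\clh$, while the adjoint condition together with $V_a^*\clh=\clh$ gives $\clr(V_a^*V_{c_1}^{r_1})=\clr(V_{c_1}^{r_1}V_a^*)=V_{c_1}^{r_1}\clh$, hence $V_a^*$-invariance; thus $V_{c_1}^{r_1}\clh$ reduces $V_a$. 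For the step I write $M=V_{c_1}^{r_1}M'$ with $M'=\clr(V_{c_2}^{r_2}\cdots V_{c_s}^{r_s})$; by the induction hypothesis $M'$ reduces \emph{both} $V_a$ (since $a\notin\{c_2,\dots,c_s\}$) and $V_{c_1}$ (since $c_1\notin\{c_2,\dots,c_s\}$). Because $V_{c_1}^{r_1}\clh$ reduces $V_a$, the projection $P=V_{c_1}^{r_1}V_{c_1}^{*r_1}$ commutes with $V_a^{p}$ and $V_a^{*p}$, so for $\xi=V_{c_1}^{r_1}\mu\in M$ one gets $V_a^{p}\xi=V_{c_1}^{r_1}\big(V_{c_1}^{*r_1}V_a^{p}V_{c_1}^{r_1}\mu\big)$ and $V_a^{*p}\xi=V_{c_1}^{r_1}\big(V_{c_1}^{*r_1}V_a^{*p}V_{c_1}^{r_1}\mu\big)$; the inner words are compositions of $V_{c_1}^{r_1}$, $V_{c_1}^{*r_1}$, $V_a^{p}$, $V_a^{*p}$, each of which preserves $M'$, so both images lie in $V_{c_1}^{r_1}M'=M$. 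Hence $M$ reduces $V_a$, closing the auxiliary induction.

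With this in hand the inductive step for the main identities is short. Applying the auxiliary claim with $a=i_1$ and then $a=i_2$ shows that $R\,\clh=\clr(V_{i_3}^{m_{i_3}}\cdots V_{i_n}^{m_{i_n}})$ reduces both $V_{i_1}$ and $V_{i_2}$. Taking $\clm=R\,\clh$ in Lemma \ref{Reducing-subspace-with-equal-range} gives $V_{i_1}^{m_{i_1}}V_{i_2}^{m_{i_2}}R\,\clh=V_{i_2}^{m_{i_2}}V_{i_1}^{m_{i_1}}R\,\clh$, and in Lemma \ref{Lemma- Main result} gives $V_{i_1}^{*m_{i_1}}V_{i_2}^{m_{i_2}}R\,\clh=V_{i_2}^{m_{i_2}}V_{i_1}^{*m_{i_1}}R\,\clh$; these are exactly the single adjacent swaps that the full-space hypotheses could not deliver on a subspace. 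I then finish each identity by a ``rotate-the-suffix'' move: after the swap, $V_{i_1}^{m_{i_1}}$ (resp.\ $V_{i_1}^{*m_{i_1}}$) sits at the front of the $(n-1)$-fold suffix $V_{i_1}^{m_{i_1}}V_{i_3}^{m_{i_3}}\cdots V_{i_n}^{m_{i_n}}$, and the induction hypothesis moves it to the very end, yielding $\clr(V_{i_1}^{m_{i_1}}V_{i_3}^{m_{i_3}}\cdots V_{i_n}^{m_{i_n}})=\clr(V_{i_3}^{m_{i_3}}\cdots V_{i_n}^{m_{i_n}}V_{i_1}^{m_{i_1}})$; left-multiplying this \emph{equality of sets} by $V_{i_2}^{m_{i_2}}$ (harmless, since applying one operator to two equal sets keeps them equal) produces the claimed $n$-fold positive identity, and the same chain with $V_{i_1}^{*m_{i_1}}$ and the adjoint induction hypothesis produces the second identity, closing the simultaneous induction.
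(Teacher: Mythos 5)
Your proposal is correct, and it rests on the same two pillars as the paper's own proof: induction on the number of factors, with Lemmas \ref{Reducing-subspace-with-equal-range} and \ref{Lemma- Main result} supplying the adjacent swap once one knows that the range of the tail product is a joint reducing subspace. The genuine difference lies in how that reducing property is obtained. You prove it as a standalone auxiliary claim (for distinct indices, $\clr(V_{c_1}^{r_1}\cdots V_{c_s}^{r_s})$ reduces every $V_a$ with $a\notin\{c_1,\dots,c_s\}$) by a second induction, using that the projection $V_{c_1}^{r_1}V_{c_1}^{*r_1}$ commutes with $V_a^{p}$ and $V_a^{*p}$. The paper gets the same fact essentially for free from its main induction hypothesis: with $\clh'=V_{i_2}^{m_{i_2}}\cdots V_{i_k}^{m_{i_k}}\clh$, the rotation identity for $k$ factors gives $V_{i_1}^{m_{i_1}}\clh'=V_{i_2}^{m_{i_2}}\cdots V_{i_k}^{m_{i_k}}V_{i_1}^{m_{i_1}}\clh\subseteq\clh'$, and the adjoint identity together with surjectivity of $V_{i_1}^{*m_{i_1}}$ gives $V_{i_1}^{*m_{i_1}}\clh'=V_{i_2}^{m_{i_2}}\cdots V_{i_k}^{m_{i_k}}V_{i_1}^{*m_{i_1}}\clh=\clh'$, so no projection argument is needed. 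Your auxiliary lemma costs an extra induction but is more modular and reusable; the paper's shortcut is leaner, though its subsequent chain of swaps (moving the newly adjoined last factor around via several invocations of the hypothesis and two applications of Lemma \ref{Reducing-subspace-with-equal-range}) is more elaborate than your single front swap followed by a suffix rotation. One point both arguments leave implicit: the adjoint range hypothesis is stated only for pairs in one order, so when the starred isometry carries the other index one should formally invoke Lemma \ref{Equivalent-condition-Range} (together with the positive condition) to obtain the reversed form; this is harmless but worth a sentence.
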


\begin{proof}
Consider $\Lambda = \{i_1 <  \dots< i_l\} $ for $2 \leq l \leq n$ and $i_j \in \{1,2, \dots,n\}$. Then  $(V_{i_p},V_{i_q})$ is any pair of isometries on $\clh$ such that $V_{i_p}^{m_{i_p}}V_{i_q}^{m_{i_q}}\clh=V_{i_q}^{m_{i_q}} V_{i_p}^{m_{i_p}}\clh$ and 
$V_{i_p}^{*m_{i_p}} V_{i_q}^{m_{i_q}}\clh=V_{i_q}^{m_{i_q}}V_{i_p}^{*m_{i_p}}\clh$,
where $m_{i_p}, m_{i_q} \in \mathbb{Z}_+$ and $i_p,i_q \in \{1,2, \dots, n\}$ for $1 \leq p < q \leq n$. Thus, the statement is trivial for $n=2$.

We will prove the statement for $n \geq 3$ by mathematical induction.  Now, the given conditions $V_{i_p}^{m_{i_p}}V_{i_3}^{m_{i_3}}\clh= V_{i_3}^{m_{i_3}}V_{i_p}^{m_{i_p}}\clh$ and $V_{i_p}^{*m_{i_p}}V_{i_3}^{m_{i_3}}\clh= V_{i_3}^{m_{i_3}}V_{i_p}^{*m_{i_p}}\clh$ infer that $ V_{i_3}^{m_{i_3}}\clh$ reduces $V_{i_p}$ for $p=1,2$. Therefore, by Lemma \ref{Reducing-subspace-with-equal-range} and Lemma \ref{Lemma- Main result}, we have  
\[
V_{i_1}^{m_{i_1}}V_{i_2}^{m_{i_2}} V_{i_3}^{m_{i_3}}\clh=V_{i_2}^{m_{i_2}}V_{i_1}^{m_{i_1}} V_{i_3}^{m_{i_3}}\clh= V_{i_2}^{m_{i_2}}V_{i_3}^{m_{i_3}} V_{i_1}^{m_{i_1}} \clh
\]
and 
\[
V_{i_1}^{*m_{i_1}} V_{i_2}^{m_{i_2}}  V_{i_3}^{m_{i_3}}\clh= V_{i_2}^{m_{i_2}} 	V_{i_1}^{*m_{i_1}} V_{i_3}^{m_{i_3}}\clh=V_{i_2}^{m_{i_2}}  V_{i_3}^{m_{i_3}}	V_{i_1}^{*m_{i_1}}\clh
\]
where $i_1, i_2, i_3$ are distinct elements in $\{1,2, \dots, n\}$, respectively. 
Hence the statement is true for $n=3$.

Suppose the above statement is true for any $k$-tuple $(V_{i_1}, \ldots, V_{i_k})$, $k <n$, of isometries on $\clh$ such that $V_{i_p}^{m_{i_p}}V_{i_q}^{m_{i_q}}\clh=V_{i_q}^{m_{i_q}} V_{i_p}^{m_{i_p}}\clh$ and 
$V_{i_p}^{*m_{i_p}} V_{i_q}^{m_{i_q}}\clh=V_{i_q}^{m_{i_q}}V_{i_p}^{*m_{i_p}}\clh$,
where $m_{i_p}, m_{i_q} \in \mathbb{Z}_+$ and $i_p,i_q \in \{1,2, \dots, n\}$ for $1 \leq p < q \leq n$. Then  
\begin{align*} 
V_{i_1}^{m_{i_1}} V_{i_2}^{m_{i_2}}\cdots V_{i_k}^{m_{i_k}}\clh
=V_{i_2}^{m_{i_2}}V_{i_3}^{m_{i_3}} \cdots V_{i_k}^{m_{i_k}} V_{i_1}^{m_{i_1}}\clh
\end{align*}
and 
\begin{align*}
V_{i_1}^{*m_{i_1}} V_{i_2}^{m_{i_2}}\cdots V_{i_k}^{m_{i_k}}\clh
=V_{i_2}^{m_{i_2}}V_{i_3}^{m_{i_3}} \cdots V_{i_k}^{m_{i_k}} V_{i_1}^{*m_{i_1}}\clh
\end{align*}	
for $m_{i_j} \in \mathbb{Z}_+$, where $1\leq j \leq k <n$. 
Since the statement is true for any $k$-tuple of isometries with $k<n$, replacing $V_{i_1}$ by $V_{i_{k+1}}$ we have
\[
V_{i_{k+1}}^{m_{i_{k+1}}} V_{i_2}^{m_{i_2}}\dots V_{i_k}^{m_{i_k}}\clh
=V_{i_2}^{m_{i_2}}V_{i_3}^{m_{i_3}} \cdots V_{i_k}^{m_{i_k}} V_{i_{k+1}}^{m_{i_{k+1}}}\clh
\]
and 
\[
V_{i_{k+1}}^{*m_{i_{k+1}}} V_{i_2}^{m_{i_2}}\dots V_{i_k}^{m_{i_k}}\clh
=V_{i_2}^{m_{i_2}}V_{i_3}^{m_{i_3}} \cdots V_{i_k}^{m_{i_k}}V_{i_{k+1}}^{*m_{i_{k+1}}}  \clh.
\]
Set $\clh'= V_{i_2}^{m_{i_2}}V_{i_3}^{m_{i_3}} \cdots V_{i_k}^{m_{i_k}} \clh$. Then the above relations imply that $\clh'$ reduces $V_{i_1}, V_{i_{k+1}}$ and employing Lemma \ref{Reducing-subspace-with-equal-range}, we get $V_{i_1}^{m_{i_1}} V_{i_{k+1}}^{m_{i_{k+1}}}\clh'=V_{i_{k+1}}^{m_{i_{k+1}}}V_{i_1}^{m_{i_1}}\clh'$. In the same lines of argument the subspace $ V_{i_3}^{m_{i_3}}\cdots V_{i_k}^{m_{i_k}} V_{i_1}^{m_{i_1}}\clh $ reduces the isometries $V_{i_2}$ and $V_{i_{k+1}}$. Therefore,
\begin{align*}
V_{i_1}^{m_{i_1}} V_{i_2}^{m_{i_2}}\cdots V_{i_k}^{m_{i_k}}V_{i_{k+1}}^{m_{i_{k+1}}}\clh
&=	V_{i_1}^{m_{i_1}}V_{i_{k+1}}^{m_{i_{k+1}}} V_{i_2}^{m_{i_2}}\cdots V_{i_k}^{m_{i_k}}\clh \\
&= V_{i_{k+1}}^{m_{i_{k+1}}}	V_{i_1}^{m_{i_1}} (V_{i_2}^{m_{i_2}}\cdots V_{i_k}^{m_{i_k}}\clh )  \\
&=V_{i_{k+1}}^{m_{i_{k+1}}} V_{i_2}^{m_{i_2}} (V_{i_3}^{m_{i_3}}\cdots V_{i_k}^{m_{i_k}}	V_{i_1}^{m_{i_1}}\clh)   \\
&= V_{i_2}^{m_{i_2}} V_{i_{k+1}}^{m_{i_{k+1}}} (V_{i_3}^{m_{i_3}} \cdots V_{i_k}^{m_{i_k}}	V_{i_1}^{m_{i_1}}\clh)\\
&=V_{i_2}^{m_{i_2}}V_{i_3}^{m_{i_3}} \cdots V_{i_k}^{m_{i_k}}V_{i_{k+1}}^{m_{i_{k+1}}} V_{i_1}^{m_{i_1}}\clh. 
\end{align*}
On the other hand, using Lemma \ref{Lemma- Main result}, we obtain
\begin{align*}
V_{i_1}^{*m_{i_1}} V_{i_2}^{m_{i_2}}\cdots V_{i_k}^{m_{i_k}} V_{i_{k+1}}^{m_{i_{k+1}}}\clh
&=V_{i_1}^{*m_{i_1}}V_{i_{k+1}}^{m_{i_{k+1}}} V_{i_2}^{m_{i_2}}\cdots V_{i_k}^{m_{i_k}} \clh\\
&=V_{i_{k+1}}^{m_{i_{k+1}}}V_{i_1}^{*m_{i_1}} (V_{i_2}^{m_{i_2}}\cdots V_{i_k}^{m_{i_k}} \clh)\\
&=V_{i_{k+1}}^{m_{i_{k+1}}} V_{i_2}^{m_{i_2}}\cdots V_{i_k}^{m_{i_k}} V_{i_1}^{*m_{i_1}}\clh \\
&=V_{i_2}^{m_{i_2}}\cdots V_{i_k}^{m_{i_k}} V_{i_{k+1}}^{m_{i_{k+1}}} \clh \\
&=V_{i_2}^{m_{i_2}}\cdots V_{i_k}^{m_{i_k}} V_{i_{k+1}}^{m_{i_{k+1}}} V_{i_1}^{*m_{i_1}}\clh.
\end{align*}
Hence the statement is true for $n=k+1$. 
	
This completes the proof. 
\end{proof}

\begin{lem}\label{Main Corollary for n-tuple}
Let $V=(V_1,\dots, V_n)$ be an $n$-tuple of isometries on $\clh$ such that 
$\clr(V_i^{m_i}V_j^{m_j})=\clr(V_j^{m_j} V_i^{m_i})$ and $\clr(V_i^{*m_i} V_j^{m_j})=\clr(V_j^{m_j} V_i^{*m_i})$ for $m_i,m_j \in \mathbb{Z}_+$, where $1 \leq i<j \leq n$. If $\Lambda$ is a subset of $I_k$ for $k < n$, then
\begin{enumerate}
\item $V^{\bm q}_{ I_k \backslash \Lambda} V_{k+1}^{p_{k+1}} \clw_{\Lambda \cup \{k+1\}}=	 V_{k+1}^{p_{k+1}} V^{\bm q}_{ I_k \backslash \Lambda}  \clw_{\Lambda \cup \{k+1\}}$.
\item $V^{\bm q}_{ I_k \backslash \Lambda} V_{\Lambda}^{\bm p} \clw_{\Lambda}=	 V_{\Lambda}^{\bm p} V^{\bm q}_{ I_k \backslash \Lambda}  \clw_{\Lambda}$. 
\item $V^{*\bm q}_{ I_k \backslash \Lambda} V_{\Lambda}^{\bm p} \clw_{\Lambda}=	 V_{\Lambda}^{\bm p} V^{*\bm q}_{ I_k \backslash \Lambda}  \clw_{\Lambda}$.
\item $V^{\bm p}_{\Lambda} V_{k+1}^{p_{k+1}}   \clw_{\Lambda \cup \{k+1\}}=
V_{k+1}^{p_{k+1}} V^{\bm p}_{ \Lambda}  \clw_{\Lambda \cup \{k+1\}}$.		
\end{enumerate}
\end{lem}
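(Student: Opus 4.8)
The plan is to sort the four identities by how the wandering subspace meets the two blocks of operators that are to be interchanged. By Proposition~\ref{Multi-Index-Reducing-Prop}, the subspace $\clw_{\Delta}$ on which we act reduces every $V_a$ with $a\notin\Delta$, while $\clw_{\Delta}\subseteq\cln(V_i^{*})$ for each $i\in\Delta$. In (1) and (2) exactly one block, namely $V^{\bm j}_{I_k\backslash\Lambda}$, consists of indices outside the defining set $\Delta$ (here $\Delta=\Lambda\cup\{k+1\}$, resp.\ $\Delta=\Lambda$) and hence \emph{reduces} $\clw_{\Delta}$, whereas the companion block ($V_{k+1}^{j_{k+1}}$, resp.\ $V_{\Lambda}^{\bm k}$) is a \emph{kernel direction}, i.e.\ it is annihilated by $\clw_{\Delta}$ under the adjoint; (3) is (2) with adjoints. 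In (4), by contrast, both $V_{\Lambda}^{\bm j}$ and $V_{k+1}^{j_{k+1}}$ are kernel directions for $\clw_{\Lambda\cup\{k+1\}}$, and this is the genuinely hard case.

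First I would extract the common engine: iterating Lemma~\ref{Important Lemma for n-tuples} in its plain and its adjoint form shows that for any two products $A,B$ of the $V_i$ built from \emph{disjoint} index sets one has $AB\clh=BA\clh$ and $A^{*}B\clh=BA^{*}\clh$ on all of $\clh$. The three identities (1)--(3) then follow from a single ``reducing-through-kernel'' step: if $V_a$ reduces a subspace $\clw$, if $a\notin\Gamma$, and if $\clw\subseteq\cln(T^{*})$ for $T=V_{\Gamma}^{\bm m}$, then $V_a^{r}T\clw=TV_a^{r}\clw$ and $V_a^{*r}T\clw=TV_a^{*r}\clw$. This base step is a range computation in the spirit of the proof of Lemma~\ref{Lemma- Main result}: from $V_a^{r}T\xi$ with $\xi\in\clw$ one uses $V_a^{r}T\clh=TV_a^{r}\clh$ to write $V_a^{r}T\xi=TV_a^{r}\omega$, and then uses that $\clw$ reduces $V_a$, the adjoint relation $V_a^{*r}T\clh=TV_a^{*r}\clh$, and $\clw\subseteq\cln(T^{*})$ to force $\omega\in\clw$; the reverse inclusion is symmetric, and the starred version is obtained the same way after Lemma~\ref{Equivalent-condition-Range}. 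Feeding this into an induction on the size of $I_k\backslash\Lambda$ --- peeling off the factor of $V^{\bm j}_{I_k\backslash\Lambda}$ adjacent to the kernel block, and checking via Lemmas~\ref{Reducing-subspace-with-equal-range} and \ref{Lemma- Main result} that the shrunken subspace $V_a^{r}\clw$ again reduces the remaining $V$'s and still lies in $\cln(T^{*})$ --- yields (1), (2), and (3).

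The main obstacle is (4), where neither block reduces $\clw_{\Lambda\cup\{k+1\}}$, so there is no reduction to the pair lemmas~\ref{Reducing-subspace-with-equal-range}--\ref{Lemma- Main result}. Here I would regard $A:=V_{\Lambda}^{\bm j}$ and $B:=V_{k+1}^{j_{k+1}}$ as a single pair of isometries, which by the engine satisfies every equal-range hypothesis of Section~4, and note that $\clw_{\Lambda\cup\{k+1\}}\subseteq\cln(A^{*})\cap\cln(B^{*})$ is their common wandering subspace; the required identity $AB\,\clw_{\Lambda\cup\{k+1\}}=BA\,\clw_{\Lambda\cup\{k+1\}}$ is then exactly the range-commutation underlying the both-shift block in \eqref{eq-shift pair}. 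To prove it I would reproduce that derivation: $\clw_{\Lambda}$ reduces $V_{k+1}$ by Proposition~\ref{Multi-Index-Reducing-Prop}, one has $\clw_{\Lambda\cup\{k+1\}}=\clw_{\Lambda}\ominus V_{k+1}\clw_{\Lambda}$ by Proposition~\ref{Corollary of intersection form}, and commuting $A$ past the truncated projections $I_{\clh}-V_{k+1}^{r}V_{k+1}^{*r}$ and letting $r\to\infty$ in the strong operator topology produces the claim. The delicate point, precisely as in the passage leading to \eqref{eq-shift pair}, is to justify interchanging this strong-operator limit with the orthogonal sum implicit in the shift directions.
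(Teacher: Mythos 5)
Your sorting of the four identities and your ``engine'' (iterating Lemma \ref{Important Lemma for n-tuples}) match the paper, but the argument has two genuine gaps. The first is the ``reducing-through-kernel'' base step on which (1)--(3) rest: having written $V_a^{r}T\xi=TV_a^{r}\omega$ for $\xi\in\clw$, you must show $\omega\in\clw$ \emph{exactly}, since $TV_a^{r}$ is injective and so $TV_a^{r}\omega\in TV_a^{r}\clw$ holds if and only if $\omega\in\clw$; equivalently, you must show that the operator $V_a^{*r}T^{*}V_a^{r}T$ maps $\clw$ into $\clw$. Nothing in your listed hypotheses ($\clw$ reduces $V_a$, $\clw\subseteq\cln(T^{*})$, equal ranges on all of $\clh$) delivers this: the paper's tools for such conclusions, Lemma \ref{Reducing-subspace-with-equal-range} and Lemma \ref{Lemma- Main result}, require the subspace to reduce \emph{both} operators, and $\clw$ can never reduce $T$ in your setting because $\clw\subseteq\cln(T^{*})$ forces $T\clw\perp\clw$. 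So your base step is essentially a restatement of (1)--(2), not a proof of them. The idea you are missing is the paper's lifting trick: by Proposition \ref{Corollary of intersection form}, $\clw_{\Lambda\cup\{k+1\}}=\clw_{\Lambda}\ominus V_{k+1}\clw_{\Lambda}$, and an isometry carries an orthogonal difference onto the orthogonal difference of the images; hence (1) is proved over $\clw_{\Lambda}$, where every operator in sight ($V_{l+1},\dots,V_{k},V_{k+1}$) has index outside $\Lambda$ and therefore \emph{reduces} $\clw_{\Lambda}$ by Proposition \ref{Multi-Index-Reducing-Prop}. Grouping $V^{\bm j}_{I_k\backslash\Lambda}$ into a single isometry, checking equal ranges with Lemma \ref{Important Lemma for n-tuples}, and applying Lemma \ref{Reducing-subspace-with-equal-range} on $\clw_{\Lambda}$ and on $V_{k+1}\clw_{\Lambda}$ gives (1) by subtraction; (2)--(3) are obtained on $\clw_{\Lambda}$ directly from Lemma \ref{Important Lemma for n-tuples} together with the expansion of $\clw_{\Lambda}$ as an alternating sum of ranges of products of the $V_i$.

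The second gap is (4), which you call the genuinely hard case and propose to attack by reproducing the strong-operator-limit computation behind \eqref{eq-shift pair}. That computation does not \emph{produce} identities of the form $AB\clw=BA\clw$; it \emph{consumes} them --- its key input is the relation $(I_{\clh}-V_2^{n}V_2^{*n})V_1^{m}\clw_1=V_1^{m}(I_{\clh}-V_2^{n}V_2^{*n})\clw_1$, which is precisely the kind of commutation (4) asserts --- so your plan is circular, and the ``delicate point'' you flag is never resolved. (Also, $\clw_{\Lambda\cup\{k+1\}}$ is not the common wandering subspace $\cln(A^{*})\cap\cln(B^{*})$ of $A=V_{\Lambda}^{\bm j}$ and $B=V_{k+1}^{j_{k+1}}$; for exponents at least $2$ it is a proper subspace of that intersection.) In the paper, (4) is the easiest item, not the hardest: write $\clw_{\Lambda\cup\{k+1\}}=\clw_{\Lambda}\ominus V_{k+1}\clw_{\Lambda}$ and apply (2), with $k+1$ now playing the role of an index \emph{outside} $\Lambda$, to get $V_{\Lambda}^{\bm j}V_{k+1}^{j_{k+1}}\clw_{\Lambda}=V_{k+1}^{j_{k+1}}V_{\Lambda}^{\bm j}\clw_{\Lambda}$ and likewise with $j_{k+1}+1$ in place of $j_{k+1}$; the orthogonal difference of these two identities is exactly (4), with no limits at all.
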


\begin{proof} $(1)$
Let $\Lambda =\{1, \dots, l\} \subseteq I_k $ for $1 \leq l \leq k <n$, 
and $I_k \backslash \Lambda= \{l+1, \dots , k\}$. Then, using Proposition \ref{Corollary of intersection form}, we obtain 
\begin{align*}
V^{\bm q}_{ I_k \backslash \Lambda} V_{k+1}^{p_{k+1}} \clw_{\Lambda \cup \{k+1\}} 
&=V_{{ l+1}}^{ q_{ l+1}} \cdots V_{k}^{ q_{k}}  V_{k+1}^{p_{k+1}}  
[\clw_{\Lambda} \ominus V_{k+1}\clw_{\Lambda}] \\
&=V_{{ l+1}}^{ q_{ l+1}} \cdots V_{k}^{ q_{k}}  V_{k+1}^{p_{k+1}} \clw_{\Lambda}
-V_{{ l+1}}^{ q_{ l+1}} \cdots V_{k}^{ q_{k}}  V_{k+1}^{p_{k+1}+1}  \clw_{\Lambda}.
\end{align*}
Applying Lemma \ref{Important Lemma for n-tuples} repeatedly, we obtain 
\[
V_{ l+1}^{q_{ l+1}} V_{ l+2}^{q_{ l+2}} \cdots  V_k^{q_k} V_{k+1}^{p_{k+1}}\clh
=  V_{k+1}^{p_{k+1}}V_{ l+1}^{q_{ l+1}} V_{ l+2}^{q_{ l+2}} \cdots  V_k^{q_k}\clh,
\]
that means $ V_{ l+1}^{q_{ l+1}} V_{ l+2}^{q_{ l+2}} \cdots  V_k^{q_k}$ and $V_{k+1}^{p_{k+1}}$ are isometries on $\clh$ such that 
\[
\clr[( V_{ l+1}^{q_{ l+1}} V_{ l+2}^{q_{ l+2}} \cdots  V_k^{q_k})V_{k+1}^{p_{k+1}}]
=\clr[ V_{k+1}^{p_{k+1}} (V_{ l+1}^{q_{ l+1}} V_{ l+2}^{q_{ l+2}} \cdots  V_k^{q_k})].
\]		
Now Proposition \ref{Multi-Index-Reducing-Prop} infers that $\clw_{\Lambda}$ is 
$V_{ l+1}^{q_{ l+1}}, \dots , V_k^{q_k}, V_{k+1}^{p_{k+1}}$-reducing subspace of 
$\clh$ and hence, by Lemma \ref{Reducing-subspace-with-equal-range} we have
\[
V_{ l+1}^{q_{ l+1}} \cdots  V_k^{q_k} V_{k+1}^{p_{k+1}}\clw_{\Lambda}
= V_{k+1}^{p_{k+1}} V_{ l+1}^{q_{ l+1}} \cdots  V_k^{q_k}  \clw_{\Lambda},
\]
and also 
\begin{align*}
V_{ l+1}^{q_{ l+1}} \cdots  V_k^{q_k} V_{k+1}^{p_{k+1}+1}\clw_{\Lambda}
&= V_{k+1}^{p_{k+1}+1} V_{ l+1}^{q_{ l+1}} \cdots  V_k^{q_k} \clw_{\Lambda} \\
&=V_{k+1}^{p_{k+1}} V_{ l+1}^{q_{ l+1}} \cdots  V_k^{q_k} V_{k+1}\clw_{\Lambda}.
\end{align*}
Hence,
\begin{align*}
V^{\bm q}_{ I_k \backslash \Lambda} V_{k+1}^{p_{k+1}} \clw_{\Lambda \cup \{k+1\}}
&=V_{k+1}^{p_{k+1}} V_{ l+1}^{q_{ l+1}} \cdots  V_k^{q_k}   [\clw_{\Lambda} 
\ominus   V_{k+1}\clw_{\Lambda}]	 \\
&= V_{k+1}^{p_{k+1}} V^{\bm q}_{ I_k \backslash \Lambda}  \clw_{\Lambda \cup \{k+1\}}.
\end{align*}

(2) Note that $\clw_1=\clh \ominus V_1\clh$ and $\clw_1 \cap \clw_2= \clw_1 \ominus V_2 \clw_1= \clh -V_1\clh-V_2\clh+ V_1V_2\clh$.  Let $\Lambda=\{1, \dots, l\} \subseteq I_n$. Then, using the mathematical induction, we can show that 
\[
\clw_{\Lambda}=\clh - \sum_{i}V_i \clh + \sum_{i,j}V_i V_j\clh-\dots +(-1)^lV_1V_2\cdots V_l \clh.
\]
Now using Lemma \ref{Important Lemma for n-tuples}, we obtain
\begin{align*}
V^{\bm q}_{ I_k \backslash \Lambda} V_{\Lambda}^{\bm p} \clw_{\Lambda}
&= V_{\Lambda}^{\bm p} V^{\bm q}_{ I_k \backslash \Lambda}\clw_{\Lambda}.
\end{align*}

(3) Again using Lemma \ref{Important Lemma for n-tuples}, we have in the same line 
\begin{align*}
V^{*\bm q}_{ I_k \backslash \Lambda} V_{\Lambda}^{\bm p} \clw_{\Lambda}
= V_{\Lambda}^{\bm p} V^{*\bm q}_{ I_k \backslash \Lambda}\clw_{\Lambda}.
\end{align*}

(4) Since $\clw_{\Lambda \cup \{k+1\}}=\clw_{\Lambda} \ominus V_{k+1}\clw_{\Lambda} $, relation (2) yields
\begin{align*}
V^{\bm p}_{\Lambda} V_{k+1}^{p_{k+1}}   \clw_{\Lambda \cup \{k+1\}}
&=	V^{\bm p}_{\Lambda} V_{k+1}^{p_{k+1}} \clw_{\Lambda} \ominus 	V^{\bm p}_{\Lambda} V_{k+1}^{p_{k+1}+1}\clw_{\Lambda} \\
&= V_{k+1}^{p_{k+1}} 	V^{\bm p}_{\Lambda} \clw_{\Lambda}\ominus  V_{k+1}^{p_{k+1}} V^{\bm p}_{\Lambda} V_{k+1}\clw_{\Lambda} \\
&=V_{k+1}^{p_{k+1}} 	V^{\bm p}_{\Lambda} \clw_{{\Lambda} \cup \{k+1\}}.
\end{align*} 
\end{proof}

\begin{lem}\label{Corollary for unitary} 
Let $V=(V_1,\dots, V_n)$ be an n-tuple isometries on $\clh$ such that $\clr(V_i^{m_i}V_j^{m_j})=\clr(V_j^{m_j} V_i^{m_i})$ and $\clr(V_i^{*m_i}V_j^{m_j})=\clr(V_j^{m_j} V_i^{*m_i})$ for  $m_i,m_j \in \mathbb{Z}_+$ where $1 \leq i<j \leq n$. Suppose 
$\clh=\clh_{uj} \oplus \clh_{sj}$ is the {\it{Wold decomposition}} for $V_j$ 
such that $\clh_{uj}= \bigcap\limits_{q_j \in \mathbb{Z}_+} V_j^{q_j}\clh$ for 
$j =1, \dots,n$. Then 
\[
\bigcap_{j \in  I_l} \clh_{uj}
= \bigcap_{q_1, \dots, q_l \in \mathbb{Z}_+} V_1^{q_1} \cdots V_l^{q_l} \clh,
\]
where $ 1 \leq j \leq l \leq n$.
\end{lem}

\begin{proof}
Since $(V_1, \dots, V_n)$ is an $n$-tuple of isometries on $\clh$, for $1 \leq j \leq l \leq n$, we obtain 
\[
\bigcap_{q_1, \dots, q_l \in \mathbb{Z}_+} V_1^{q_1} \cdots V_l^{q_l} \clh
\subseteq \bigcap_{j \in I_l}  \big(\bigcap_{q_j \in \mathbb{Z}_+} V_j^{q_j}\clh \big) =\bigcap_{j \in I_l}   \clh_{uj}.
\] 
It suffices to prove that
\begin{align}\label{eq-unitary n tuple}
\bigcap_{j \in I_l}  \big(\bigcap_{q_j \in \mathbb{Z}_+} V_j^{q_j}\clh \big) \subseteq \bigcap_{q_1, \dots, q_l \in \mathbb{Z}_+} V_1^{q_1} \cdots V_l^{q_l} \clh
\end{align}
for $1 \leq j \leq l \leq n$, and we will prove this by mathematical induction. 
If $(V_1,V_2)$ is a pair of isometries on $\clh$ such that $V_1^mV_2^n \clh=V_2^n V_1^m\clh$ and $V_1^{*m} V_2^n \clh=V_2^n V_1^{*m}\clh$ for $m,n \in \mathbb{Z}_+$, the equation \eqref{eq-unitary pair} implies that the above statement is true for $l=2$.

Now suppose the statement is true for any $k$-tuple, $k <l$, of isometries 
$(V_1, \dots, V_k)$ on $\clh$, that means for $1\leq j\leq  k< l$
\begin{align*}
\bigcap_{j \in I_k}  \big(\bigcap_{q_j \in \mathbb{Z}_+} V_j^{q_j}\clh \big) 
\subseteq \bigcap_{q_1, \dots, q_k \in \mathbb{Z}_+} V_1^{q_1} \cdots V_k^{q_k} \clh.
\end{align*}
Let $h \in \bigcap\limits_{j \in I_{k+1}}  \big(\bigcap\limits_{q_j \in \mathbb{Z}_+} V_j^{q_j}\clh)=\bigcap\limits_{j \in I_{k}}  \big(\bigcap\limits_{q_j \in \mathbb{Z}_+} V_j^{q_j}\clh) \cap \big(\bigcap\limits_{q_{k+1} \in \mathbb{Z}_+} V_{k+1}^{q_{k+1}}\clh)$. Then, for each $q_{k+1} \in \mathbb{Z}_+$, we have 
\[
V_{k+1}^{*q_{k+1}}h  \in  \bigcap\limits_{j \in I_{k}} \big(\bigcap\limits_{q_j \in \mathbb{Z}_+} V_j^{q_j}V_{k+1}^{*q_{k+1}} \clh)= \bigcap_{j \in I_k}\big(\bigcap_{q_j \in \mathbb{Z}_+} V_j^{q_j}\clh \big) \subseteq \bigcap_{q_1, \dots, q_k \in \mathbb{Z}_+} V_1^{q_1} \cdots V_k^{q_k} \clh.
\] 
Therefore, using Lemma \ref{Important Lemma for n-tuples}, we get
\[
h \in \bigcap_{q_1, \dots, q_k, q_{k+1} \in \mathbb{Z}_+} V_{k+1}^{q_{k+1}} V_1^{q_1} \cdots V_k^{q_k}  \clh =\bigcap_{q_1, \dots, q_k , q_{k+1}\in \mathbb{Z}_+} V_1^{q_1} \cdots V_k^{q_k} V_{k+1}^{q_{k+1}} \clh.
\]
Hence,  \eqref{eq-unitary n tuple} holds for $l=k+1$ and this finishes the proof.
\end{proof}

\begin{lem}\label{Corollary for shift}
Let $V=(V_1,\dots, V_n)$ be an  $n$-tuple of isometries on $\clh$ such that 
$\clr(V_i^{m_i}V_j^{m_j})=\clr(V_j^{m_j} V_i^{m_i})$ and $\clr(V_i^{*m_i}V_j^{m_j})=\clr(V_j^{m_j} V_i^{*m_i})$ for $m_i,m_j \in \mathbb{Z}_+$, where $1 \leq i<j \leq n$. Suppose that $\clh=\clh_{ui} \oplus \clh_{si}$ is the {\textit{Wold decomposition}} for $V_i$ such that $\clh_{si}= \bigoplus\limits_{p_i \in \mathbb{Z}_+} V_i^{p_i} \clw_i$ for $\ i =1,2, \dots, n$. Then 
\[
\bigcap_{i \in I_l}   \clh_{si}= \bigoplus_{p_1, \dots, p_l \in \mathbb{Z}_+} V_1^{p_1} \cdots V_l^{p_l} (\clw_1 \cap \dots \cap \clw_l),
\]
where $ 1 \leq i \leq l \leq n$.
\end{lem}

\begin{proof}
Suppose $(V_1,V_2)$ is a pair of  isometries on $\clh$ such that $\clr(V_1^m V_2^n)=\clr(V_2^n V_1^m)$ and $\clr(V_1^{*m} V_2^n)=\clr(V_2^n V_1^{*m})$ for $m,n \in \mathbb{Z}_+$. Then the equation \eqref{eq-shift pair} implies that the above statement is true for $l=2$.

Assume that the statement is true for any $k$-tuple $(k <l)$ of isometries 
$(V_1, \dots, V_k)$ on $\clh$. Then, for $1\leq i\leq  k< l$, we have 
\begin{align*}
\bigcap_{i \in I_k}   \big(\bigoplus\limits_{p_i \in \mathbb{Z}_+}V_i^{p_i}\clw_{i} \big) = \bigoplus_{p_1, \dots, p_k \in \mathbb{Z}_+} V_1^{p_1} \cdots V_k^{p_k} (\clw_1 \cap \dots \cap \clw_k).
\end{align*}
To show the above statement is true for $(k+1)$-tuple of isometries $(V_1, \dots, V_{k+1})$ on $\clh$, firstly observe that for any $i \in I_{k+1}$,  
\[
\clw_1 \cap \cdots \cap \clw_{k+1} \subseteq \clw_i.
\]
Now using the fact $V_j\clw_i \subseteq \clw_i$ for $ 1\leq i \neq j \leq k+1$ and  applying Lemma \ref{Main Corollary for n-tuple}, for each $i \in I_{k+1}$, we have 
\[
\bigoplus_{p_1, \dots, p_{k+1}\in \mathbb{Z}_+} V_1^{p_1} \cdots V_{k+1}^{p_{k+1}} (\clw_1 \cap \dots \cap \clw_{k+1}) \subseteq \bigoplus_{p_1, \dots, p_{k+1} \in \mathbb{Z}_+} V_1^{p_1} \cdots V_{k+1}^{p_{k+1}} \clw_i  \subseteq  \bigoplus\limits_{p_i \in \mathbb{Z}_+}V_i^{p_i}\clw_{i}.
\]
It follows that 
\begin{equation*}
\bigoplus_{p_1, \dots, p_{k+1} \in \mathbb{Z}_+} V_1^{p_1} \cdots V_{k+1}^{p_{k+1}} (\clw_1 \cap \dots \cap \clw_{k+1}) \subseteq 	\bigcap_{i \in I_{k+1}}   \big(\bigoplus\limits_{p_i \in \mathbb{Z}_+}V_i^{p_i}\clw_{i} \big).
\end{equation*}

For the reverse inclusion, let 
\begin{equation}\label{eq-rev inclusion}
h \in 	\bigcap_{i \in I_{k+1}}   \big(\bigoplus\limits_{p_i \in \mathbb{Z}_+}V_i^{p_i}\clw_{i} \big)= \bigoplus_{p_1, \dots, p_k \in \mathbb{Z}_+} V_1^{p_1} \cdots V_k^{p_k} (\clw_1 \cap \dots \cap \clw_k) \cap \big(\bigoplus\limits_{p_{k+1} \in \mathbb{Z}_+}V_{k+1}^{p_{k+1}}\clw_{i} \big).
\end{equation} 
Define $\clw_{K}={{\bigcap\limits_{i=1}^{k} }\clw_i}$. Then, by Proposition \ref{Multi-Index-Reducing-Prop}, $V_{k+1}|_{\clw_{K}}$ is an isometry. So, for each fixed $p_{k+1} \in \mathbb{Z}_{+}$, we get
\begin{align*}
(I_{\clh}-V_{k+1}^{p_{k+1}}V_{k+1}^{*p_{k+1}}) \clw_K 		
&=[\clw_K \ominus V_{k+1} \clw_K] \oplus \cdots \oplus V_{k+1}^{ p_{k+1}-1} [\clw_K \ominus V_{k+1} \clw_K]\\
&= (\clw_K \cap \clw_{k+1}) \oplus \cdots \oplus V_{k+1}^{ p_{k+1}-1}( \clw_K \cap \clw_{k+1}) \\
&= \bigoplus_{r=0}^{ p_{k+1}-1}  V_{k+1}^{r}( \clw_1 \cap \dots \cap \clw_{k+1})
\end{align*}	
as Proposition \ref{Corollary of intersection form} gives $\clw_K \ominus V_{k+1}\clw_K= \clw_K \cap \clw_{k+1}$. Now employing Lemma \ref{Main Corollary for n-tuple}, for each fixed $p_{k+1} \in \mathbb{Z}_+$, relation \eqref{eq-rev inclusion} becomes
\begin{align*}
(I_{\clh}-V_{k+1}^{p_{k+1}}V_{k+1}^{*p_{k+1}}) h & \in  \bigoplus_{p_1, \dots, p_k \in \mathbb{Z}_+} (I_{\clh}-V_{k+1}^{p_{k+1}}V_{k+1}^{*p_{k+1}}) V_1^{p_1} \cdots V_k^{p_k} (\clw_1 \cap \cdots \cap \clw_k) \\
&= \bigoplus_{p_1, \dots, p_k \in \mathbb{Z}_+} V_1^{p_1} \cdots V_k^{p_k} (I_\clh-V_{k+1}^{p_{k+1}} V_{k+1}^{*p_{k+1}}) (\clw_1 \cap \cdots \cap \clw_k) \\
&=  \bigoplus_{r=0}^{ p_{k+1}-1} \bigoplus_{p_1, \dots, p_k \in \mathbb{Z}_+} V_1^{p_1} \cdots V_k^{p_k} V_{k+1}^{r}( \clw_1 \cap \cdots \cap \clw_{k+1}).
\end{align*}	
Now $\lim\limits_{p_{k+1} \rightarrow \infty} V_{k+1}^{*p_{k+1}}|_{\clh_{s(k+1)}}=0$ 
in the strong operator topology. Letting $p_{k+1} \rightarrow \infty$, we have
\[
h  \in \bigoplus_{p_1, \dots, p_{k+1} \in \mathbb{Z}_+} V_1^{p_1} \cdots V_k^{p_k} V_{k+1}^{p_{k+1}}( \clw_1 \cap \dots \cap \clw_{k+1}).
\]
Therefore, 
\begin{equation*}
\bigcap_{i \in I_{k+1}}  \big(\bigoplus\limits_{p_i \in \mathbb{Z}_+}V_i^{p_i}\clw_{i} \big) \subseteq \bigoplus_{p_1, \dots, p_{k+1} \in \mathbb{Z}_+} V_1^{p_1} \cdots V_k^{p_k} V_{k+1}^{p_{k+1}}( \clw_1 \cap \cdots \cap \clw_{k+1}).
\end{equation*}
Hence the statement is true for $l=k+1$. This completes the proof.
\end{proof}

Now we are in a position to state and prove  our main result as follows:

\begin{thm}\label{Theorem Main Result}
Let $V=(V_1,\dots, V_n)$ be an $n$-tuple of isometries on $\clh$ such that $\clr(V_i^{m_i}V_j^{m_j})=\clr(V_j^{m_j} V_i^{m_i})$ and $\clr(V_i^{*m_i}V_j^{m_j})=\clr(V_j^{m_j} V_i^{*m_i})$ for  $m_i,m_j \in \mathbb{Z}_+$, where $1 \leq i<j \leq n$. Then there exist $2^n$ joint $V$-reducing subspaces $\{\clh_{\Lambda}: \Lambda \subseteq I_n \}$ (counting the trivial subspace $\{0\}$) such that 
\begin{align*}
\clh= \bigoplus_{\Lambda \subseteq I_n} \clh_{\Lambda},
\end{align*}
where, for  $\clh_{\Lambda} \neq \{0\}$, $V_j|_{\clh_{\Lambda}}$ is unitary if $j \in I_n \backslash \Lambda$ and $V_i|_{\clh_{\Lambda}}$ is a shift if $i \in  \Lambda$. Moreover, for each $ \Lambda \subseteq I_n $, 
\begin{align}\label{equn 4}
\clh_{\Lambda}= \bigoplus_{{\bm p} \in \mathbb{Z}_{+}^{|\Lambda|}} V_{{ \Lambda}}^{\bm p} 
\bigg( \bigcap_{{\bm q} \in \mathbb{Z}_{+}^{{n - |\Lambda|}}}V^{\bm q}_{ I_n \backslash \Lambda} \ \clw_{ \Lambda}  \bigg)
\end{align}	
and the above decomposition is unique.
\end{thm}

\begin{proof}
Let $n \geq 2$ be a fixed positive integer. Suppose $V=(V_1,\dots, V_n)$ is an $n$-tuple of isometries on $\clh$ such that $\clr(V_i^{m_i}V_j^{m_j})=\clr(V_j^{m_j} V_i^{m_i})$ and $\clr(V_i^{*m_i}V_j^{m_j})=\clr(V_j^{m_j} V_i^{*m_i})$ for  $m_i,m_j \in \mathbb{Z}_+$ where $1 \leq i<j \leq n$. Thus, by Theorem \ref{thm-Multi-isometries}, there exist $2^n$ joint $V$-reducing subspaces $\{\clh_{\Lambda}: \Lambda\subseteq I_n \} $ (including the subspace \{0\}) such that 
\begin{align*}
\clh= \bigoplus_{\Lambda \subseteq I_n}{\clh_\Lambda}
\end{align*}
and for each non-zero decomposition subspace  $\clh_{\Lambda} $, we have $V_j|_{\clh_{\Lambda}}$ is unitary if  $ j \in I_n \backslash \Lambda$ and $V_i|_{\clh_{\Lambda}}$ is a shift if  $i \in \Lambda$. Also, from the equation \eqref{equ-subspace}, we have the orthogonal decomposition spaces as
\[
\clh_{\Lambda}=\big(\bigcap_{j \in I_n \backslash \Lambda} \clh_{uj}\big) \cap \big(\bigcap_{i \in \Lambda} \clh_{si}\big).
\]
Now consider $\Lambda= \{i_1 < \dots < i_l\} \subseteq I_n$ for $ 1\leq l \leq n$, and $I_n \backslash \Lambda= \{i_{l+1} <  \dots < i_n\}$. Then Lemma \ref{Corollary for unitary} gives 
\begin{align*}
\bigcap_{j \in I_n \backslash \Lambda} \clh_{uj}
&=  \big(\bigcap_{q_{i_{l+1}} \in \mathbb{Z}_+} V_{i_{l+1}}^{q_{i_{l+1}}} \clh \big) \cap \cdots \cap \big(\bigcap_{{q_{i_n}} \in \mathbb{Z}_+} V_{i_n}^{q_{i_n}} \clh\big) 
\\
&= \bigcap_{q_{i_{l+1}, \dots , q_{i_n}} \in \mathbb{Z}_+}V_{i_{l+1}}^{q_{i_{l+1}}} \cdots V_{i_{n}}^{q_{i_n}} \clh,
\end{align*}
and Lemma \ref{Corollary for shift} yields
\begin{align*}
\bigcap_{i \in \Lambda} \clh_{si}
&=\big(\bigoplus_{p_{i_1} \in \mathbb{Z}_+} V_{i_1}^{p_{i_1}} \clw_{i_1} \big) \cap \cdots \cap \big(\bigoplus_{p_{i_l} \in \mathbb{Z}_+} V_{i_l}^{p_{i_l}} \clw_{i_l} \big) \\
&=\bigoplus_{p_{i_1}, \dots, p_{i_l} \in \mathbb{Z}_+} V_{i_1}^{p_{i_1}} \cdots  V_{i_l}^{p_{i_l}} (\clw_{i_1} \cap \dots \cap \clw_{i_l})\\
&=\bigoplus_{p_{i_1}, \dots, p_{i_l} \in \mathbb{Z}_+} V_{i_1}^{p_{i_1}} \cdots  V_{i_l}^{p_{i_l}} \clw_{\Lambda} .
\end{align*}
Therefore,
\[
\clh_{\Lambda}=\Big(\bigcap_{q_{i_{l+1}, \dots , q_{i_n}} \in \mathbb{Z}_+}V_{i_{l+1}}^{q_{i_{l+1}}} \cdots V_{i_{n}}^{q_{i_n}} \clh\Big) \cap \Big(\bigoplus_{p_{i_1}, \dots, p_{i_l} \in \mathbb{Z}_+} V_{i_1}^{p_{i_1}} \cdots  V_{i_l}^{p_{i_l}} \clw_{\Lambda} \Big).
\]
Let $h \in \Big(\bigcap\limits_{q_{i_{l+1}, \dots , q_{i_n}} \in \mathbb{Z}_+}V_{i_{l+1}}^{q_{i_{l+1}}} \cdots V_{i_{n}}^{q_{i_n}} \clh\Big)\cap \Big(\bigoplus\limits_{p_{i_1}, \dots, p_{i_l} \in \mathbb{Z}_+} V_{i_1}^{p_{i_1}} \cdots  V_{i_l}^{p_{i_l}} \clw_{\Lambda} \Big)$. Again, by Proposition \ref{Multi-Index-Reducing-Prop}, $V_j|_{\clw_{\Lambda}}$ is  isometry on $\clw_{\Lambda}$ for $j \in I_n \backslash \Lambda$. Using Lemma \ref{Main Corollary for n-tuple}, for each $q_{i_{l+1}}, \dots, q_{i_n} \in \mathbb{Z}_+$, we obtain 
\begin{align*}
V_{i_{n}}^{*q_{i_n}} \cdots	V_{i_{l+1}}^{*q_{i_{l+1}}}h 
&\in \bigoplus\limits_{p_{i_1}, \dots, p_{i_l} \in \mathbb{Z}_+}  V_{i_{n}}^{*q_{i_n}} \cdots	V_{i_{l+1}}^{*q_{i_{l+1}}}V_{i_1}^{p_{i_1}} \cdots  V_{i_l}^{p_{i_l}}  \clw_{\Lambda} \\ 
&= \bigoplus\limits_{p_{i_1}, \dots, p_{i_l} \in \mathbb{Z}_+} V_{i_1}^{p_{i_1}} \cdots  V_{i_l}^{p_{i_l}}  V_{i_{n}}^{*q_{i_n}} \cdots	V_{i_{l+1}}^{*q_{i_{l+1}}}\clw_{\Lambda}	\\
&=\bigoplus\limits_{p_{i_1}, \dots, p_{i_l} \in \mathbb{Z}_+} V_{i_1}^{p_{i_1}} \cdots  V_{i_l}^{p_{i_l}} \clw_{\Lambda}.		 
\end{align*}
Now $V_j|_{\clh_{\Lambda}}$ is unitary for $j \in I_n \backslash \Lambda$, that is, $V_{i_{l+1}}, \dots, V_{i_n}$ are unitaries on $\clh_{\Lambda}$.  Therefore, for each $q_{i_{l+1}}, \dots, q_{i_n} \in \mathbb{Z}_+$ 
\begin{align*}
h=  V_{i_{l+1}}^{q_{i_{l+1}}} \cdots V_{i_n}^{q_{i_n}} V_{i_{n}}^{*q_{i_n}} \cdots	V_{i_{l+1}}^{*q_{i_{l+1}}}h
&\in \bigoplus\limits_{p_{i_1}, \dots, p_{i_l} \in \mathbb{Z}_+} V_{i_{l+1}}^{q_{i_{l+1}}} \cdots V_{i_n}^{q_{i_n}}V_{i_1}^{p_{i_1}} \cdots  V_{i_l}^{p_{i_l}} \clw_{\Lambda}\\
&=  \bigoplus\limits_{\bm p=(p_{i_1}, \dots, p_{i_l}) \in \mathbb{Z}_+^{|\Lambda|}}    V_{\Lambda}^{\bm p}V_{i_{l+1}}^{q_{i_{l+1}}} \cdots V_{i_n}^{q_{i_n}}\clw_{\Lambda}
\end{align*}
which implies 
\[
h \in \bigcap_{q_{i_{l+1}}, \dots, q_{i_n} \in \mathbb{Z}_+} \bigoplus\limits_{\bm p \in \mathbb{Z}_+^{|\Lambda|}} V_{\Lambda}^{\bm p} V_{i_{l+1}}^{q_{i_{l+1}}} \cdots V_{i_n}^{q_{i_n}} \clw_{\Lambda}
=\bigoplus\limits_{\bm p \in \mathbb{Z}_+^{|\Lambda|}}V_{\Lambda}^{\bm p} \big( 
\bigcap_{\bm q \in \mathbb{Z}_+^{n-|\Lambda|}} V^{\bm q}_{I_n \backslash \Lambda}\clw_{\Lambda} \big).
\]
It follows that
\[
\clh_{\Lambda}=\big(\bigcap_{j \in I_n \backslash \Lambda} \clh_{uj}\big) \cap \big(\bigcap_{i \in \Lambda} \clh_{si}\big) \subseteq \bigoplus\limits_{\bm p \in \mathbb{Z}_+^{|\Lambda|}}V_{\Lambda}^{\bm p} \big( \bigcap_{\bm q \in \mathbb{Z}_+^{n-|\Lambda|}} V^{\bm q}_{I_n \backslash \Lambda}\clw_{\Lambda} \big).
\]

To prove the reverse inclusion, we know that $\clw_{\Lambda} \subseteq \clw_{i}$ for all $ i \in \Lambda$. Now, using Lemma \ref{Main Corollary for n-tuple}, we obtain
\[
\bigoplus\limits_{p_{i_1}, \dots, p_{i_l} \in \mathbb{Z}_+} V_{i_1}^{p_{i_1}} \cdots  V_{i_l}^{p_{i_l}} \clw_{\Lambda}\subseteq \bigoplus\limits_{p_{i_1}, \dots, p_{i_l} \in \mathbb{Z}_+} V_{i_1}^{p_{i_1}} \cdots  V_{i_l}^{p_{i_l}} \clw_{i} \subseteq \bigoplus\limits_{p_{i} \in \mathbb{Z}_+}V_{i}^{p_{i}}\clw_{i}.
\] 
Also
$ \bigoplus\limits_{p_{i_1}, \dots, p_{i_l} \in \mathbb{Z}_+} V_{i_1}^{p_{i_1}} \cdots  V_{i_l}^{p_{i_l}} \clw_{\Lambda}\subseteq  \clh $. Therefore,
\[
\bigoplus\limits_{\bm p \in \mathbb{Z}_+^{|\Lambda|}}V_{\Lambda}^{\bm p} \big( 
\bigcap_{\bm q \in \mathbb{Z}_+^{n - |\Lambda|}} V^{\bm q}_{I_n \backslash \Lambda}\clw_{\Lambda} \big)  \subseteq \bigcap_{j \in I_n \backslash \Lambda}  \big(\bigcap_{q_{j} \in \mathbb{Z}_+} V_{j}^{q_{j}}\clh \big) \cap 
\bigcap_{i \in \Lambda }   \big(\bigoplus\limits_{p_{i} \in \mathbb{Z}_+}V_{i}^{p_{i}}\clw_{i} \big) =\clh_{\Lambda}.
\]
Hence, from both the inclusions,
\[
\clh_{\Lambda}= \bigoplus\limits_{\bm p \in \mathbb{Z}_+^{|\Lambda|}}V_{\Lambda}^{\bm p} \big( \bigcap_{\bm q \in \mathbb{Z}_+^{n - |\Lambda|}} V^{\bm q}_{I_n \backslash \Lambda}\clw_{\Lambda} \big).
\]
This finishes the proof.
\end{proof}

There are few remarks in order:

\begin{rem}
\begin{enumerate}
\item One can prove the above theorem by classical mathematical induction method. However, our approach is here more analytical, new and applicable in general setting.

\item Let $(V_1, \ldots, V_n)$ be an $n$-tuple of $\clu_n$-twisted isometries on 
$\clh$ with respect to a twist $\{U_{ij}\}_{i<j}$, that is,
\begin{center}
$V_iV_j=U_{ij}V_jV_i, \hspace{0.5 cm}V_i^*V_j= U^*_{ij}V_jV_i^*,  \
\ V_kU_{ij} =U_{ij}V_k ~~\mbox{for all} \ i,j,k=1, \dots, n \ \text{and}  \ i \neq j$
\end{center}
where $\{U_{ij}\}_{1 \leq i < j \leq n}$ is $\binom{n}{2}$ commuting unitaries on 
$\clh$ such that $U_{ji}:=U^*_{ij}$. Note that a complete structure for $\clu_n$-twisted isometries has been established in \cite{MM-Wold-type} (see also \cite{RSS-TWISTED ISOMETRIES}, \cite{SARKAR-WOLD}). It is noteworthy to mention that if $(V_1,\dots,  V_n)$ is an $n$-tuple of $\clu_n$-twisted isometry on $\clh$,  then clearly $\clr(V_i^{m_i} V_j^{m_j})= \clr(V_j^{m_j}V_i^{m_i})$ and  
$\clr(V_i^{*m_i} V_j^{m_j})= \clr(V_j^{m_j}V_i^{*m_i})$ for $1 \leq i <j \leq n$, where $m_{i}, m_j \in \mathbb{Z}_+$. Consequently, Theorem  \ref{Theorem Main Result} gives {\textit{Wold decomposition}} for larger class than $\clu_n$-twisted isometries, and the orthogonal decomposition spaces coincide with  
orthogonal decomposition spaces for $n$-tuple of doubly commuting isometries. 
\end{enumerate}
\end{rem}

We also obtain the structure of $n$-tuple of isometries in the same line as in \cite{BKS-CANONICAL} in the case of finite dimensional wandering subspace.

\begin{cor}
Let $V=(V_1, \dots, V_n)$ be an n-tuple of isometries on $\clh$ such that $\clr(V_i^{m_i} V_j^{m_j})=\clr(V_j^{m_j} V_i^{m_i})$ for $m_i, m_j \in \mathbb{Z}_+$ and dim $\cln (V_i^*) < \infty$ for all $i, j =1,\dots,n$. Then there is a unique decomposition having $2^n$-joint $V$-reducing subspaces $\{\clh_{\Lambda}: \Lambda \subseteq I_n\}$ such that 
\[
\clh= \bigoplus_{\Lambda \subseteq I_n}\clh_{\Lambda},
\] 
where for $\clh_{\Lambda} \neq \{0\}$, $V_j|_{\clh_{\Lambda}}$ is unitary if $j \in I_n \backslash \Lambda$, and $V_i|_{\clh_{\Lambda}}$ is a shift if $i \in \Lambda$. 
\end{cor}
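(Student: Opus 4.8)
The plan is to deduce this from the existence-and-structure half of Theorem \ref{thm-Multi-isometries}, the only difference being that here we are given the single (unstarred) range condition $\clr(V_i^{m_i}V_j^{m_j})=\clr(V_j^{m_j}V_i^{m_i})$ together with $\dim\cln(V_i^*)<\infty$, rather than both range conditions. Everything in the proof of Theorem \ref{thm-Multi-isometries} that is purely combinatorial — taking the single-variable Wold decompositions $\clh=\clh_{ui}\oplus\clh_{si}$, slicing each $\clh_\Lambda$ by the Wold decomposition of $V_k|_{\clh_\Lambda}$, and assembling the $2^n$ pieces by induction on the coordinate being split — uses nothing about the $V_i$ beyond the fact that the subspaces $\clh_{ui},\clh_{si}$ jointly reduce every $V_k$. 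Hence the whole corollary reduces to re-establishing that mutual reducing property from the weaker hypotheses; uniqueness then follows verbatim as in Theorem \ref{thm-Multi-isometries}.

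First I would record what the unstarred condition gives on its own. By Lemma \ref{Invariant-subspace-with-equal-range}, $\clh_{ui}=\bigcap_m V_i^m\clh$ is $V_k$-invariant for every $k$. Moreover, running the computation in the proof of Proposition \ref{Multi-Index-Reducing-Prop} but keeping only the half that uses $V_jV_i\clh=V_iV_j\clh$, one gets for $\eta\in\clw_\Lambda$ and $j\notin\Lambda$ that $\langle V_j^*\eta,V_i h\rangle=\langle V_i^*\eta,V_j h'\rangle=0$ for all $i\in\Lambda$; that is, $V_j^*\clw_\Lambda\subseteq\clw_\Lambda$. Thus from the unstarred condition alone each $\clw_\Lambda$ is invariant under $V_j^*$ (equivalently $\clh_{ui}$ is invariant under $V_k$), but the reverse invariance — which in the full theorem came from the starred condition via Corollary \ref{Reducing-subspace-corollary} — is exactly what is now missing.

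The finite-multiplicity hypothesis is what replaces it, and this is the heart of the matter. Applying Lemma \ref{Equal-Range-Isometry} to the equal ranges $\clr(V_i^m V_j)=\clr(V_j V_i^m)$ gives the exact identity $V_i^m(V_jV_j^*)V_i^{*m}=V_j(V_i^m V_i^{*m})V_j^*$, so the positive defect $V_i^m V_i^{*m}-V_j(V_i^m V_i^{*m})V_j^*=V_i^m(I_{\clh}-V_jV_j^*)V_i^{*m}$ has rank at most $\dim\cln(V_j^*)<\infty$ for every $m$. Letting $m\to\infty$, with $V_i^m V_i^{*m}\to P_{\clh_{ui}}$ in the strong operator topology, this defect converges to $P_{\clh_{ui}}-V_j P_{\clh_{ui}}V_j^*$, and the finite-dimensionality (equivalently $\dim\clw_\Lambda\le\dim\clw_{i_1}<\infty$) is exactly what one uses, in the spirit of the finite wandering case of \cite{BKS-CANONICAL}, to force this limit to vanish. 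The resulting identity $V_j P_{\clh_{ui}}V_j^*=P_{\clh_{ui}}$ gives $\clh_{ui}\subseteq V_j\clh_{ui}$, which together with the invariance $V_j\clh_{ui}\subseteq\clh_{ui}$ yields $V_j\clh_{ui}=\clh_{ui}$; since $V_j$ is then an isometry carrying $\clh_{ui}$ onto itself, it maps $\clh_{si}=\clh_{ui}^{\perp}$ into $\clh_{si}$, so $\clh_{ui}$ (and hence $\clh_{si}$) reduces $V_j$.

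The step I expect to be the real obstacle is precisely this last upgrade from $V_j^*$-invariance to full reducing. Without finiteness it is genuinely false — Example \ref{Example-non commuting but equal range} satisfies the unstarred condition for all powers yet fails the starred one — so any correct argument must use $\dim\cln(V_i^*)<\infty$ in an essential way. The delicate point in making the limit vanish is controlling the interaction between the \emph{unitary} part of $V_i$, on which $V_i^m V_i^{*m}$ does not shrink, and the finite-dimensional defect of $V_j$; an ergodic/Ces\`aro averaging of the unitary orbit, or a direct finite-dimensional argument on the $V_j^*$-invariant space $\clw_\Lambda$, should close this gap. Once mutual reducing is secured for every pair, the assembly into the $2^n$ reducing subspaces with the shift/unitary dichotomy, and the uniqueness, are immediate from Theorem \ref{thm-Multi-isometries}.
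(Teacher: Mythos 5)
You have correctly identified the crux, and the reduction in your first paragraph is sound: with only the unstarred condition, Lemma \ref{Invariant-subspace-with-equal-range} still gives $V_j\clh_{ui}\subseteq\clh_{ui}$, and once one knows that every $\clh_{ui}$ (equivalently every $\clh_{si}$) reduces every $V_j$, the assembly of the $2^n$ subspaces and the uniqueness go through exactly as in Theorem \ref{thm-Multi-isometries}. For what it is worth, the paper itself supplies \emph{no} proof of this corollary (it is stated with only the remark that it follows ``in the same line as'' \cite{BKS-CANONICAL}), so your attempt has to stand on its own merits.

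It does not, because the identity you propose to extract from finite-dimensionality, namely $V_jP_{\clh_{ui}}V_j^{*}=P_{\clh_{ui}}$ (equivalently $V_j\clh_{ui}=\clh_{ui}$, i.e.\ $V_j|_{\clh_{ui}}$ unitary), is \emph{false} under the hypotheses of the corollary --- it even contradicts the corollary's own conclusion, since $\clh_{ui}=\bigoplus_{\Lambda\not\ni i}\clh_{\Lambda}$ contains the pieces $\clh_{\Lambda}$ with $j\in\Lambda\not\ni i$, on which $V_j$ is a shift. Concretely, take $\clh=\ell^2(\mathbb{Z}_+)$, $V_1=\lambda I_{\clh}$ with $|\lambda|=1$, and $V_2=S$ the unilateral shift: this pair is doubly commuting, $\clr(V_1^{m_1}V_2^{m_2})=\clr(S^{m_2})=\clr(V_2^{m_2}V_1^{m_1})$, $\dim\cln(V_1^{*})=0$ and $\dim\cln(V_2^{*})=1$, yet $\clh_{u1}=\clh$ and $P_{\clh_{u1}}-V_2P_{\clh_{u1}}V_2^{*}=I-SS^{*}$ is the rank-one projection onto $\cln(S^{*})$, not zero. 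So the strong limit you hope to ``force to vanish'' is genuinely nonzero; your own computation only yields that $P_{\clh_{ui}}-V_jP_{\clh_{ui}}V_j^{*}$ is positive of rank at most $\dim\cln(V_j^{*})$, which is sharp, and no Ces\`aro averaging can make a nonzero positive operator vanish. What the corollary actually requires is the strictly weaker property $V_j^{*}\clh_{ui}\subseteq\clh_{ui}$ (equivalently $V_j\clh_{si}\subseteq\clh_{si}$), i.e.\ that $\clh_{ui}$ \emph{reduces} $V_j$ --- you have conflated ``reducing'' with ``restricting to a unitary,'' and deriving the reducing property from the unstarred condition plus finite-dimensionality (presumably via the Wold decomposition of the product $V_1\cdots V_n$, whose wandering space is again finite-dimensional, as in \cite{BKS-CANONICAL}) is precisely the work your proposal leaves undone. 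A minor additional point: Example \ref{Example-non commuting but equal range} does not show that ``without finiteness it is genuinely false'' --- that example has one-dimensional kernels $\cln(V_i^{*})$ and is perfectly consistent with the corollary (both operators are shifts); it only shows that the starred condition can fail while the unstarred one holds.
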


\section{Analytic Models}

This section deals with the analytic model for $n$-tuples of {\textit{isometries with equal range}}. Recall that the pairs $(M_z, \Delta[{\bm{r}}])$ on $H^2(\mathbb{D})$, and $(M_{z_1},M_{z_2}(\Delta[\bm r] \otimes I_{H^2(\mathbb{D}} ))$ on $H^2(\mathbb{D}^2)$ given in Example \ref{Example3-proper}, are typical examples of pairs of {\textit{isometries with equal range}}, and these are actually ``building blocks" in the analytic models.

Before proceeding to the main results, we recall some standard notations and definitions:
For $l \geq 1$, let $\D^l = \D \times \dots \times \D$ be the open unit polydisc in 
$\mathbb{C}^l$. We use the notation $\z$ for the $l$-tuple $(z_1, \ldots, z_l)$ in $\mathbb{C}^l$. Also, for any multi-index $\bm{k} = (k_1, \ldots, k_l) \in \mathbb{Z}_+^l$ and $\z \in \mathbb{C}^l$, we write $\z^{\bm{k}} = z_1^{k_1} \cdots z_l^{k_l}$.
Two tuples $V=(V_1, \ldots, V_n)$ on a Hilbert space $\clh$ and $T=(T_1, \ldots, T_n)$
on a Hilbert space $\clk$ are said to be unitarily equivalent if there exists a unitary $U: \clh \rightarrow \clk$ such that $UV_i = T_iU$ for all $i=1, \ldots, n$.

Let $\cle$ be a separable complex Hilbert space. Then $\cle$-valued Hardy space over the unit polydisc 
$\D^l$, denoted by $H^2_{\cle}(\D^l)$, is the Hilbert space of all holomorphic functions $f(\bm z) = \sum_{ \bm{k} \in \mathbb{Z}^l_+ } a_{\bm k} \bm{z}^{\bm k}$, where $a_{\bm k} \in \cle$, on $\mathbb{D}^l$ such that
\[
\|f\|_{H^2_{\cle}(\D^l)} := \left( \sum_{ \bm{k} \in \mathbb{Z}^l_+ } \| a_{\bm k} \|_{\cle}^2 \right)^{\frac{1}{2}}< \infty.
\]
We shall frequently identify the $\cle$-valued Hardy space $H^2_{\cle}(\D^l)$ 
with $H^2(\D^l)\otimes \cle$ by the canonical unitary map $\Pi :
H^2_{\cle}(\D^l) \raro H^2(\D^l) \otimes \cle$ defined by
\[
\Pi (\z^{\bm{k}} \eta) = \z^{\bm{k}} \otimes \eta \quad \quad (\bm{k}
\in \mathbb{Z}^l_+, \eta \in \cle).
\]
It is again easy to observe that the scalar valued Hardy space $H^2(\D^l)$, $l>1$ 
over $\D^l$ can be identified via the natural unitary map
\[ 
z_1^{k_1} \cdots z_{l}^{k_{l}} \mapsto z^{k_1}\otimes \cdots \otimes z^{k_l}
\]
with the $l$-fold Hilbert space tensor product $H^2(\D) \otimes \cdots \otimes H^2(\D)$ of the Hardy space $H^2(\D)$ over the unit disc $\D$. Let $(M_{z_1}, \ldots, M_{z_l})$ denote the $l$-tuple of multiplication operators by the co-ordinate functions $\{z_j\}_{j=1}^l$ on $H^2_{\cle}(\D^l)$, that is,
\[
(M_{z_j} f)(\w) = w_j f(\w),
\]
for all $f \in H^2_{\cle}(\D^l)$, $\w \in \D^l$ and $j = 1, \ldots, l$. It is also easy to check that $(M_{z_1}, \ldots, M_{z_l})$ is an $l$-tuple of \textit{doubly commuting} shifts on $H^2_{\cle}(\D^l)$. Clearly, the shift $M_{z_j}$ on $H^2_{\cle}(\D^l)$ can be identified with $M_{z_j} \otimes I_{\cle}$ on $H^2(\D^l) \otimes \cle$ for $j = 1, \ldots, l$. This canonical identification will be used throughout the rest of the paper.

Returning back to the analytic representation of a single isometry, let $V$ be an isometry on $\clh$. Then the {{\textit{Wold decomposition}} for the isometry $V$ 
gives $\clh=\clh_{\emptyset} \oplus \clh_{\{1\}}$,
where $\clh_{\emptyset}= \cap_{m=0}^{\infty} V^m\clh$, $\clh_{\{1\}}= \oplus_{m=0}^{\infty} V^m \clw$ and $\clw=\clh \ominus V\clh$. Now define a map 
$\tau_V: \clh_{\emptyset} \oplus \clh_{\{1\}} \rightarrow \clh_{\emptyset} \oplus H^2_{\clw}(\mathbb{D}) $  by
\[
\tau_V (h_u \oplus V^m \eta) = h_u \oplus z^m \eta \quad (m \in \mathbb{Z}_+, h_u \in \clh_{\emptyset}, \eta \in \clw).
\]
Clearly, $\tau_V$ is unitary, and also 
\[
\tau_V \begin{bmatrix}
	V|_{\clh_{\emptyset}} &0 \\
	0   & V|_{\clh_{\{1\}}}
\end{bmatrix}
= \begin{bmatrix}
	V|_{\clh_{\emptyset}} & 0 \\
	0   & M_z    
\end{bmatrix} \tau_V,
\]
that means, $V=V|_{\clh_{\emptyset}} \oplus V|_{\clh_{\{1\}}}$ on $\clh$  is unitarily equivalent to $V|_{\clh_{\emptyset}} \oplus  M_z $  on $ \clh_{\emptyset} \oplus H^2_{\clw}(\mathbb{D})$. It follows that the shift part of the isometry $V$ yields an analytic representation in terms of multiplication operator $M_z$ on the $\clw$-valued Hardy space $H^2_{\clw}(\mathbb{D})$ over the unit disc.

Inspiring from the above analytic structure of a single isometry as well as analytic models for certain tuples of isometries described in \cite{JP-NON COMMUTING} \& \cite{RSS-TWISTED ISOMETRIES}, we intend to find the analytic models for tuples of {\textit{isometries with equal range}}. Before moving on to the main results, we introduce some fundamental concepts from \cite{JP-NON COMMUTING} \& \cite{RSS-TWISTED ISOMETRIES}.

\begin{defn}
Let $j \in \{ 1, \ldots, l \}$. Let $U= (U^{(m)})$ be a sequence of unitaries on 
$\cle$. The $j$-th diagonal operator with symbol $U$, denoted by $\Delta_{j}[U]$, is a unitary operator on $H^2_{\cle}(\mathbb{D}^l)$ defined as
\[
\Delta_{j}[U ] (\bm{z}^{\bm k}\eta)=\bm{z}^{\bm k} (U^{(k_{j})}\eta) 
\quad \quad (\bm{k} \in \mathbb{Z}^l_+, \eta \in \cle). 
\]
\end{defn}

\begin{defn}\label{Lambda-wandering data} 
Let $V=(V_1, \ldots, V_n)$ be an $n$-tuple of isometries with equal range on $\clh$. 
Suppose $\Lambda=\{i_1< \dots< i_l\} \subseteq I_n$ for $1 \leq l \leq n$, and $I_n \backslash  \Lambda=\{i_{l+1}< \dots< i_n \}$. The $\Lambda$-wandering subspace of the tuple $V$ is defined as
\[
\cls_{\Lambda}(V)= \bigcap_{\bm{q} \in \mathbb{Z}_+^{n- |\Lambda|}} V_{I_n \backslash \Lambda}^{\bm q} \clw_{\Lambda},
\]
and the $\Lambda$-wandering data of $V$ as the $(n-l+1)$-tuple   
\[
wd_{\Lambda}(V)= (I|_{\cls_{\Lambda}(V)},V_{i_{l+1}}|_{\cls_{\Lambda}(V)},\dots, V_{i_n}|_{\cls_{\Lambda}(V)})
\]
on $\cls_{\Lambda}(V)$.
\end{defn}

If the tuple $V$ is obvious from the context, we will simply write $\cls_{\Lambda}(V)$ as $\cls_{\Lambda}$. The wandering data $wd_{\Lambda}(V)$ defined above is guaranteed to be a well-defined $(n-l+1)$-tuple on $\cls_{\Lambda}$ by the following lemma:

\begin{lem}\label{Lemma- Reduce A wandering subspace }
Let $V=(V_1, \ldots, V_n)$ be an $n$-tuple of isometries with equal range on $\clh$. If $\Lambda \subseteq I_n$, then $\cls_{\Lambda}$ reduces $V_j$ for $j \in  I_n \backslash \Lambda$. Moreover, $V_j|_{\cls_{\Lambda}}$ is unitary for $j \in I_n \backslash \Lambda$ and $V_i^*|_{\cls_{\Lambda}}=0$ for $i \in \Lambda$.  
\end{lem}

\begin{proof}
Suppose $\Lambda=\{i_1<\dots< i_l\}$ for $ 1 \leq l < n$ and $I_n \backslash \Lambda =\{i_{l+1}< \dots< i_n\}$. Then $\clw_{\Lambda}$ is $V_j$-reducing subspace for $j \in I_n \backslash  \Lambda$ by Proposition \ref{Multi-Index-Reducing-Prop}. Now Lemma \ref{Reducing-subspace-with-equal-range} and Lemma \ref{Lemma- Main result} yield
\[
V_{i_t}^{q_{i_t}}V_{i_s}^{q_{i_s}}\clw_\Lambda =V_{i_s}^{q_{i_s}}V_{i_t}^{q_{i_t}}\clw_\Lambda \quad \text{and} \quad 
V_{i_t}^{*q_{i_t}}V_{i_s}^{q_{i_s}}\clw_\Lambda =V_{i_s}^{q_{i_s}}V_{i_t}^{*q_{i_t}}\clw_\Lambda
\]
where $i_t,i_s \in I_n \backslash \Lambda$ and $q_{i_t},q_{i_s} \in \mathbb{Z}_+$. Applying Lemma \ref{Important Lemma for n-tuples}, we have 
\begin{align*}
&V_{i_{l+1}}^{q_{i_{l+1}}} V_{i_{l+2}}^{q_{i_{l+2}}}\cdots V_{i_n}^{q_{i_n}}\clw_{\Lambda}= V_{i_{l+2}}^{q_{i_{l+2}}} \cdots V_{i_n}^{q_{i_n}} V_{i_{l+1}}^{q_{i_{l+1}}}\clw_{\Lambda},	 \\
\quad 
\text{and} \quad  & V_{i_{l+1}}^{*q_{i_{l+1}}} V_{i_{l+2}}^{q_{i_{l+2}}}\cdots V_{i_n}^{q_{i_n}}\clw_{\Lambda}= V_{i_{l+2}}^{q_{i_{l+2}}}\cdots V_{i_n}^{q_{i_n}} V_{i_{l+1}}^{*q_{i_{l+1}}}\clw_{\Lambda}
\end{align*}
where $q_{i_{l+1}}, \dots, q_{i_n} \in \mathbb{Z}_+$.

Let $j=i_r\in I_n \backslash \Lambda$ for $ r \in \{l+1, \dots, n\}$. Then using the above relations we get
\begin{align*}
V_{i_r}  (\bigcap\limits_{\bm q \in \mathbb{Z}_+^{n-|\Lambda|}}V_{I_n \backslash \Lambda}^{\bm q}\clw_{\Lambda})
&= \bigcap\limits_{\bm q \in \mathbb{Z}_+^{n-|\Lambda|}}V_{i_r} (V_{i_{l+1}}^{q_{i_{l+1}}}  \cdots V_{i_{r-1}}^{q_{i_{r-1}}}V_{i_{r}}^{q_{i_{r}}}V_{i_{r+1}}^{q_{i_{r+1}}}\cdots V_{i_n}^{q_{i_n}}\clw_{\Lambda})\\
&= \bigcap\limits_{\bm q \in \mathbb{Z}_+^{n-|\Lambda|}}V_{i_r}^{q_{i_r}+1} V_{i_{r+1}}^{q_{i_{r+1}}}\cdots V_{i_n}^{q_{i_n}} V_{i_{l+1}}^{q_{i_{l+1}}} \cdots V_{i_{r-1}}^{q_{i_{r-1}}}\clw_{\Lambda}\\
&= \bigcap\limits_{\bm q \in \mathbb{Z}_+^{n-|\Lambda|}}V_{i_r}^{q_{i_r}} V_{i_{r+1}}^{q_{i_{r+1}}}\cdots V_{i_n}^{q_{i_n}} V_{i_{l+1}}^{q_{i_{l+1}}} \cdots V_{i_{r-1}}^{q_{i_{r-1}}}\clw_{\Lambda}\\
&= \bigcap\limits_{\bm q \in \mathbb{Z}_+^{n-|\Lambda|}} V_{i_{l+1}}^{q_{i_{l+1}}}  \cdots V_{i_{r}}^{q_{i_{r}}}\cdots V_{i_n}^{q_{i_n}}\clw_{\Lambda} \\
&= \bigcap\limits_{\bm q \in \mathbb{Z}_+^{n-|\Lambda|}} V_{I_n \backslash \Lambda}^{\bm q}\clw_{\Lambda}.
\end{align*}
If $\Lambda =\emptyset$, then $\clw_{\emptyset}=\clh$. Now Theorem \ref{Theorem Main Result} asserts that $\cls_{\Lambda}=\clh_{\emptyset}$ reduces $V_j$ for all $j \in I_n$. On the other hand, if $\Lambda =I_n$, the statement holds trivially.
Therefore, 
\[	
\cls_\Lambda(V)= \bigcap\limits_{\bm q \in \mathbb{Z}_+^{n-|\Lambda|}}
V_{I_n \backslash \Lambda}^{\bm q}\clw_{\Lambda} 
\]
reduces each $V_j$ for $j \in I_n \backslash \Lambda$. 	
	
Finally, $V_j(\cls_{\Lambda})=\cls_{\Lambda}$ implies that $V_j|_{\cls_{\Lambda}}$ is unitary for $j \in I_n \backslash \Lambda$. Clearly, $\cls_{\Lambda} \subseteq \clw_{\Lambda}$ for all $\Lambda \subseteq I_n$, and hence $V_i^*|_{\cls_{\Lambda}}=0$ for all $i \in \Lambda$.
\end{proof}

The next result on the $\Lambda$-wandering subspace is significant in the continuation. 

\begin{lem}\label{Lemma-Equal on A-wandering subspace}
Let $(V_1,\dots, V_n)$ be an $n$-tuple of isometries with equal range on $\clh$. 
If $\Lambda \subseteq I_n$, then
\begin{enumerate}
\item[(a)] $V_{i_j}V_{\Lambda}^{\bm p} \cls_{\Lambda}= V_{\Lambda}^{\bm p+e_j}  \cls_{\Lambda}$ for $i_j \in  \Lambda$.

\item[(b)] $V_{i_k} V_{\Lambda}^{\bm p} \cls_{\Lambda}= V_{\Lambda}^{\bm p} V_{i_k} \cls_{\Lambda}$ for $i_k \in I_n \backslash \Lambda$.
\end{enumerate} 	
\end{lem}

\begin{proof}
Let $\Lambda=\{i_1< \dots< i_l\}$ for $1 \leq l < n$ and $I_n \backslash \Lambda=\{i_{l+1}< \dots < i_n\}$. Recall the proof of part (2) of Lemma \ref{Main Corollary for n-tuple} that
 \[
\clw_{\Lambda}=\clh - \sum V_{i_p} \clh + \sum V_{i_p} V_{i_q}\clh-\dots +(-1)^lV_{i_1}V_{i_2}\cdots V_{i_l} \clh.
 \]   
$(a)$ If $i_j \in \Lambda$, then, using Lemma \ref{Important Lemma for n-tuples} repeatedly on the second line below, we have 
\begin{align*}
V_{i_j}V_{\Lambda}^{\bm p} \cls_{\Lambda}
&=\bigcap\limits_{\bm q \in \mathbb{Z}_+^{n-|\Lambda|}} V_{i_j}V_{\Lambda}^{\bm p} V_{I_n \backslash \Lambda}^{\bm q}\clw_{\Lambda}\\
&=\bigcap\limits_{\bm q \in \mathbb{Z}_+^{n-|\Lambda|}}V_{i_j} V_{i_1}^{p_{i_1}}\cdots V_{i_j}^{p_{i_j}}\cdots V_{i_l}^{p_{i_l}} V_{I_n \backslash \Lambda}^{\bm q}[\clh - \sum V_{i_p} \clh +\dots +(-1)^lV_{i_1}V_{i_2}\cdots V_{i_l} \clh]  \\
&=\bigcap\limits_{\bm q \in \mathbb{Z}_+^{n-|\Lambda|}} V_{i_1}^{p_{i_1}}\cdots V_{i_j}^{p_{i_j}+1}\cdots V_{i_l}^{p_{i_l}} V_{I_n \backslash \Lambda}^{\bm q} \clw_{\Lambda} \\
&=\bigcap\limits_{\bm q \in \mathbb{Z}_+^{n-|\Lambda|}}V_{\Lambda}^{\bm p+e_j} V_{I_n \backslash \Lambda}^{\bm q}\clw_{\Lambda}\\
&=V_{\Lambda}^{\bm p+e_j}\cls_{\Lambda}.
\end{align*}

$(b)$ If $i_k \in  I_n \backslash \Lambda$, then, again using Lemma \ref{Important Lemma for n-tuples}, we obtain
\[
V_{i_k} V_{\Lambda}^{\bm p} V_{I_n \backslash \Lambda}^{\bm q}\clw_{\Lambda}
=  V_{\Lambda}^{\bm p} V_{i_k}V_{I_n \backslash \Lambda}^{\bm q}\clw_{\Lambda}.
\]
Therefore, 
\begin{align*}
 V_{i_k}V_{\Lambda}^{\bm p}\cls_{\Lambda}=  \bigcap\limits_{\bm q \in \mathbb{Z}_+^{n-|\Lambda|}}
 V_{i_k} V_{\Lambda}^{\bm p} V_{I_n \backslash \Lambda}^{\bm q}\clw_{\Lambda}
 =\bigcap\limits_{\bm q \in \mathbb{Z}_+^{n-|\Lambda|}} 
 V_{\Lambda}^{\bm p}V_{i_k} V_{I_n \backslash \Lambda}^{\bm q} \clw_{\Lambda} 
= V_{\Lambda}^{\bm p}V_{i_k}  \cls_{\Lambda}.
\end{align*}
\end{proof}

Next, we note the observation that follows from the above lemma and Lemma \ref{Lemma-Unitary-equal range}.

\begin{prop}\label{Remark-for unitary}
Let $(V_1,\ldots, V_n)$ be an $n$-tuple of {\textit{isometries with equal range}} on 
$\clh$. If $\Lambda=  \{i_1< \dots< i_l\} \subseteq I_n$, then
\begin{enumerate}
\item[(1)] for each $i_j \in  \Lambda$, there exists a unique unitary operator $U_{i_j} \in \clb(\clh)$ such that 
\[
V_{i_j}V_{\Lambda}^{\bm p}= V_{\Lambda}^{\bm p+e_j}U_{i_j},			
\]  
where $U_{i_j}=U_{i_ji_1}^{(p_{i_1})} U_{i_ji_2}^{(p_{i_2})}\cdots U_{i_ji_{j-1}}^{(p_{i_{j-1}})}$, and $U_{i_ji_1}^{(p_{i_1})},\dots, U_{i_ji_{j-1}}^{(p_{i_{j-1}})}$ are unitaries on $\clh$.
		 
\item[(2)] for each $i_k \in  I_n \backslash \Lambda$, there exists a unique unitary operator $U_{i_k} \in \clb(\clh)$ such that   
\[
V_{i_k} V_{\Lambda}^{\bm p}= V_{\Lambda}^{\bm p} V_{i_k}U_{i_k}, 
\]  
where $U_{i_k}=U_{i_ki_1}^{(p_{i_1})}U_{i_ki_2}^{(p_{i_2})} \cdots U_{i_ki_{l}}^{(p_{i_l})}$, and $U_{i_ki_1}^{(p_{i_1})},\dots, U_{i_ki_{l}}^{(p_{i_l})}$ are unitaries on $\clh$.
\end{enumerate}
\end{prop}

\begin{proof}

$(1)$ Suppose $i_j \in \Lambda=  \{i_1< \dots< i_l\} \subseteq I_n$. 
If $i_j=i_1$, then $U_{i_j}=I_{\clh}$.  

For $i_j \neq i_1$, using Lemma \ref{Important Lemma for n-tuples}, we have
\[
V_{i_j}  V_{i_1}^{p_{i_1}}V_{i_2}^{p_{i_2}}\cdots V_{i_l}^{p_{i_l}}\clh=V_{i_1}^{p_{i_1}}V_{i_j} V_{i_2}^{p_{i_2}}\cdots V_{i_l}^{p_{i_l}}\clh.
\]
Now, by Lemma \ref{Lemma-Unitary-equal range}, there exists a unique unitary 
$U_{i_ji_1}^{(p_{i_1})} \in \clb(\clh)$ such that 
\[
V_{i_j} V_{\Lambda}^{\bm p}= V_{i_1}^{p_{i_1}}V_{i_j} V_{i_2}^{p_{i_2}}\cdots V_{i_l}^{p_{i_l}} 	U_{i_ji_1}^{(p_{i_1})}.
\]
Let $i_j\neq i_2$. Then Lemma \ref{Important Lemma for n-tuples} yields
\begin{align*}
V_{i_j} V_{\Lambda}^{\bm p}\clh
&= V_{i_1}^{p_{i_1}}V_{i_j} V_{i_2}^{p_{i_2}}\cdots V_{i_l}^{p_{i_l}} U_{i_ji_1}^{(p_{i_1})}\clh \\
&= V_{i_1}^{p_{i_1}} V_{i_2}^{p_{i_2}}V_{i_j} V_{i_3}^{p_{i_3}} \cdots V_{i_l}^{p_{i_l}} U_{i_ji_1}^{(p_{i_1})}\clh.
\end{align*}
Again Lemma \ref{Lemma-Unitary-equal range} gives a unique unitary $U_{i_ji_2}^{(p_{i_2})} \in \clb(\clh)$ such that 
\[
V_{i_j} V_{\Lambda}^{\bm p} = V_{i_1}^{p_{i_1}} V_{i_2}^{p_{i_2}}V_{i_j} V_{i_3}^{p_{i_3}} \cdots V_{i_l}^{p_{i_l}} U_{i_ji_1}^{(p_{i_1})}U_{i_ji_2}^{(p_{i_2})}.
\]
Applying Lemma \ref{Lemma-Unitary-equal range} repeatedly, we obtain $(j-1)$  unitaries $U_{i_ji_1}^{(p_{i_1})},U_{i_ji_2}^{(p_{i_2})} \dots, U_{i_ji_{j-1}}^{(p_{i_{j-1}})}$ on $\clh$ such that
\begin{align*}
V_{i_j} V_{\Lambda}^{\bm p}
&=V_{i_1}^{p_{i_1}} V_{i_2}^{p_{i_2}} \cdots V_{i_j}^{p_{i_j}+1} \cdots V_{i_l}^{p_{i_l}} U_{i_ji_1}^{(p_{i_1})}U_{i_ji_2}^{(p_{i_2})} \cdots U_{i_ji_{j-1}}^{(p_{i_{j-1}})}\\
&= V_{\Lambda}^{\bm p +e_j}U_{i_ji_1}^{(p_{i_1})}U_{i_ji_2}^{(p_{i_2})} \cdots U_{i_ji_{j-1}}^{(p_{i_{j-1}})}\\
&=V_{\Lambda}^{\bm p +e_j}U_{i_j},
\end{align*}
where $U_{i_j}=U_{i_ji_1}^{(p_{i_1})}U_{i_ji_2}^{(p_{i_2})} \cdots U_{i_ji_{j-1}}^{(p_{i_{j-1}})}$ is a unique unitary operator $\clh$.

$(2)$ 
If $\Lambda=\emptyset$, then $U_{i_k}=I_{\clh}$ for all $i_k \in I_n \backslash \Lambda$. Suppose $\Lambda \neq \emptyset$. Then Lemma \ref{Important Lemma for n-tuples} yields
\[
V_{i_k}  V_{i_1}^{p_{i_1}}V_{i_2}^{p_{i_2}}\cdots V_{i_l}^{p_{i_l}}\clh=V_{i_1}^{p_{i_1}}V_{i_k} V_{i_2}^{p_{i_2}}\cdots V_{i_l}^{p_{i_l}}\clh.
\]
Now Lemma \ref{Lemma-Unitary-equal range} gives a unique unitary ${U}_{i_ki_1}^{(p_{i_1})} \in \clb(\clh)$ such that 
\[
 V_{i_k} V_{\Lambda}^{\bm p}= V_{i_1}^{p_{i_1}}V_{i_k} V_{i_2}^{p_{i_2}}\cdots V_{i_l}^{p_{i_l}} 	{U}_{i_ki_1}^{(p_{i_1})}.
\]
Repeating this process $l$-times, we obtain $l$ unitaries  ${U}_{i_ki_1}^{(p_{i_1})}, \dots, {U}_{i_ki_{l}}^{(p_{i_{l}})} \in \clb(\clh)$ such that
\begin{align*}
V_{i_k} V_{\Lambda}^{\bm p}
&=V_{i_1}^{p_{i_1}}V_{i_2}^{p_{i_2}}  \cdots V_{i_l}^{p_{i_l}} V_{i_k} {U}_{i_ki_1}^{(p_{i_1})} {U}_{i_ki_2}^{(p_{i_2})} \cdots {U}_{i_ki_{l}}^{(p_{i_{l}})} \\
&=V_{\Lambda}^{\bm p } V_{i_k}U_{i_k},
\end{align*}	
where $U_{i_k}= U_{i_ki_1}^{(p_{i_1})} \cdots U_{i_ki_{l}}^{(p_{i_{l}})}$ is a unique unitary operator on $\clh$.  
\end{proof}

From the above proposition and Lemma \ref{Lemma-Equal on A-wandering subspace}, we record the following observations which will be useful in finding the analytic models of tuples.

\begin{rem}\label{Remark-Reducingspace-unitary}
Let $(V_1,\dots, V_n)$ be an $n$-tuple of {\textit{isometries with equal range}} on 
$\clh$. Suppose $\Lambda=\{i_1<\dots<i_l\} \subseteq I_n$. Then
\begin{enumerate}
\item[(i)] $V_{i_j}V_{\Lambda}^{\bm p} \cls_{\Lambda}= V_{\Lambda}^{\bm p+e_j}  \cls_{\Lambda}$ for $i_j \in  \Lambda$, and $V_{i_k}V_{\Lambda}^{\bm p} \cls_{\Lambda}= V_{\Lambda}^{\bm p}V_{i_k}  \cls_{\Lambda}$ for $i_k \in I_n \backslash \Lambda$ infer that $\cls_{\Lambda}$ reduce the unitaries $U_{i_ji_1}^{(p_{i_1})},\ldots, U_{i_ji_{j-1}}^{(p_{i_{j-1}})}$, and $U_{i_ki_1}^{(p_{i_1})}, \ldots, U_{i_ki_{l}}^{(p_{i_{l}})}$ obtained in Proposition \ref{Remark-for unitary}. Thus $U_{i_j}=U_{i_ji_1}^{(p_{i_1})}\cdots U_{i_ji_{j-1}}^{(p_{i_{j-1}})}$ and $U_{i_k}=U_{i_ki_1}^{(p_{i_1})} \cdots U_{i_ki_{l}}^{(p_{i_{l}})}$ are unitaries on $\cls_{\Lambda}$.

\item[(ii)] Lemma \ref{Reducing-subspace-with-equal-range} yields $V_{i_j}V_{\Lambda}^{\bm p} \clh_{\Lambda}= V_{\Lambda}^{\bm p+e_j}\clh_{\Lambda}$  for $i_j \in \Lambda$ and $V_{i_k}V_{\Lambda}^{\bm p} \clh_{\Lambda}= V_{\Lambda}^{\bm p}V_{i_k}\clh_{\Lambda}$  for $i_k \in I_n \backslash \Lambda$. Thus, $\clh_{\Lambda}$ reduce the unitaries $U_{i_ji_1}^{(p_{i_1})},\dots, U_{i_ji_{j-1}}^{(p_{i_{j-1}})}$ and $U_{i_ki_1}^{(p_{i_1})}, \dots, U_{i_ki_{l}}^{(p_{i_{l}})}$, which are obtained in Proposition \ref{Remark-for unitary}, and hence $U_{i_j}=U_{i_ji_1}^{(p_{i_1})}\cdots U_{i_ji_{j-1}}^{(p_{i_{j-1}})}$ and $U_{i_k}=U_{i_ki_1}^{(p_{i_1})} \cdots U_{i_ki_{l}}^{(p_{i_{l}})}$ are unitaries on $\clh_{\Lambda}$. 
\end{enumerate}	
\end{rem}

Recall that the non-zero shift part (if any) of a single isometry due to {\it{Wold decomposition}} yields the analytic structure of the isometry. Now Theorem \ref{Theorem Main Result} says that an $n$-tuple of isometries $V=(V_1, \ldots, V_n)$ with equal range on $\clh$ admits {\it{Wold decomposition}}. More precisely, there exist $2^n$ joint $V$-reducing subspaces $\clh_{\Lambda}$ such that $\clh=\bigoplus\limits_{\Lambda \subseteq I_n} \clh_{\Lambda}$, where
$\clh_{\Lambda}= \bigoplus\limits_{\bm p \in \mathbb{Z}_+^{|\Lambda|}} V_{\Lambda}^{\bf p}\cls_{\Lambda}$; and for $\clh_{\Lambda} \neq \{0\}$, $V_j|_{\clh_{\Lambda}}$ is unitary if $j \in I_n \backslash \Lambda$ and $V_i|_{\clh_{\Lambda}}$ is a shift if $i \in  \Lambda$. Thus, it is important to separate the shift parts (if any) from the tuple of isometries to describe the analytic structure of $n$-tuples of {\it{isometries with equal range}}. To do that we firstly find out the analytical construction of the shift part $V|_{\clh_\Lambda}$ for $\Lambda \subseteq I_n$. Since $V|_{\clh_{\emptyset}}$ is unitary, we will discuss when $\Lambda \neq \emptyset$.

Suppose that $\clh_{\Lambda} \neq \{0\}$, where $\Lambda=\{i_1<\dots<i_l\} \subseteq I_n$ for some $1 \leq l \leq n$ and $|\Lambda|=l$. Now for each orthogonal decomposition space $\clh_{\Lambda} = \bigoplus\limits_{\bm p \in \mathbb{Z}_+^{l}}V_{\Lambda}^{\bm p} \cls_{\Lambda}$, define the canonical unitary $\tau_{\Lambda}:\clh_{\Lambda} \rightarrow H^2_{\cls_{\Lambda}}(\mathbb{D}^l)$ as
\begin{align}\label{Define unitary operator Pi }
\tau_{\Lambda}(V_{\Lambda}^{\bm p} \xi)= {\bm z}^{\bm p} \xi \quad (\ \bm p=(p_1,\dots,p_l) \in \mathbb{Z}_+^l, \ \xi \in \cls_{\Lambda}).
\end{align}
Therefore, 
 \[
(\tau_{\Lambda}V_{i_1} \tau_\Lambda^*)(\bm z^{\bm p} \xi)=\tau_{\Lambda}(V_{i_1}V_{\Lambda}^{\bm p} \xi)= \tau_{\Lambda}(V_{i_{1}}^{p_1 +1} V_{i_2}^{p_{2}}\cdots V_{i_l}^{p_{l}}\xi)= z_1({\bm z}^{\bm p} \xi)=M_{z_1}({\bm z}^{\bm p}\xi),
 \]
that means $\tau_\Lambda V_{i_1} =M_{z_1}\tau_\Lambda$. Now if $i_j \in \Lambda$ for  $j \in \{2,3,\dots,l\}$, then, by Proposition \ref{Remark-for unitary}, we obtain
\begin{align*}
(\tau_{\Lambda}V_{i_j} \tau_\Lambda^*)({\bm z}^{\bm p} \xi)
	&=\tau_{\Lambda}(V_{i_j}V_{\Lambda}^{\bm p} \xi)\\
	&=\tau_{\Lambda}(V_{\Lambda}^{\bm p +e_j} U_{i_j}\xi)\\
	&=\tau_{\Lambda}(V_{\Lambda}^{\bm p+e_j} U_{i_ji_1}^{(p_{1})}U_{i_ji_2}^{(p_{2})} \cdots U_{i_ji_{j-1}}^{(p_{{j-1}})}\xi)\\
	&=z_{j}({\bm z}^{\bm p} U_{i_ji_1}^{(p_{1})}U_{i_ji_2}^{(p_{2})} \cdots U_{i_ji_{j-1}}^{(p_{{j-1}})}\xi)\\
	&=M_{z_j} (\Delta_{1}[U_{i_ji_1}] \cdots \Delta_{j-1}[U_{i_ji_{j-1}}])({\bm z}^{\bf p} \xi).
\end{align*}
Therefore,
\[
\tau_{\Lambda}V_{i_j} \tau_\Lambda^*=M_{z_j} (\Delta_{1}[U_{i_ji_1}] \cdots \Delta_{j-1}[U_{i_ji_{j-1}}]) \quad \text{for}~~ 2 \leq j \leq l.
\]

For the remaining part, suppose $i_k \in I_n \backslash \Lambda$ for $k \in \{l+1, \dots, n\}$. Again using Proposition \ref{Remark-for unitary}, we have 
\begin{align*}
(\tau_{\Lambda}V_{i_k} \tau_\Lambda^*)({\bm z}^{\bm p} \xi)
&= \tau_{\Lambda}(V_{i_k}V_{\Lambda}^{\bm p} \xi)\\
&= \tau_{\Lambda}(V_{\Lambda}^{\bm p} V_{i_k} U_{i_k}\xi) \\
&=  \tau_{\Lambda}\left(V_{\Lambda}^{\bm p} (V_{i_k} U_{i_ki_1}^{(p_{1})} \cdots U_{i_ki_{l}}^{(p_{{l}})} \xi)\right) \\
&={\bm z}^{\bm p} (V_{i_k}|_{\cls_{\Lambda}} U_{i_ki_1}^{(p_{1})} \cdots U_{i_ki_{l}}^{(p_{{l}})}\xi) \\
&=(I_{H^2(\mathbb{D}^l)} \otimes V_{i_k}|_{\cls_{\Lambda}}) (\Delta_{1}[U_{i_ki_1}] \cdots \Delta_{l}[U_{i_ki_{l}}])({\bm z}^{\bm p} \xi).
\end{align*}
Thus, for $l+1 \leq k \leq n$, we obtain
\[
\tau_{\Lambda}V_{i_k} \tau_\Lambda^*=(I_{H^2(\mathbb{D}^l)} \otimes V_{i_k}|_{\cls_{\Lambda}})(\Delta_{1}[U_{i_ki_1}] \cdots \Delta_{l}[U_{i_ki_{l}}]).
\]

Consider now the $n$-tuple of operators $M_{\Lambda}=(M_{\Lambda,1}, \dots, M_{\Lambda,n})$ on $H^2_{\cls_{\Lambda}}(\mathbb{D}^l)$ consisting of $l$ operators 
$\{\tau_{\Lambda}V_{i_j} \tau_\Lambda^*\}_{j=1}^{l}$ and $(n-l)$ operators 
$\{\tau_{\Lambda}V_{i_k} \tau_\Lambda^*\}_{k=l+1}^{n}$, where
\begin{align}\label{Analytical Representation}
M_{\Lambda,s}	  
&=\begin{cases} 
		M_{z_1}  &\text{if $s=i_1$} \\
		M_{z_j} (\Delta_{1}[U_{i_ji_1}] \cdots \Delta_{j-1}[U_{i_ji_{j-1}}])& 
		\text{if $s=i_j$ and  $2 \leq j \leq l$}\\
		(I_{H^2(\mathbb{D}^l)} \otimes V_{i_k}|_{\cls_{\Lambda}})(\Delta_{1}[U_{i_ki_1}] \cdots \Delta_{l}[U_{i_ki_{l}}]) & \text{ if $s =i_k$ and  $l+1 \leq k \leq n $},
\end{cases} 
\end{align}
and $s \in \{1,2, \dots, n\}$. Note that
\[
\tau_{\Lambda} \cln(V_{i_j}^*)= \tau_{\Lambda} (\clh_{\Lambda} \ominus V_{i_j} \clh_{\Lambda})= H^2_{\cls_{\Lambda}}(\mathbb{D}^l) \ominus M_{\Lambda,i_j}H^2_{\cls_{\Lambda}}(\mathbb{D}^l)=\cln(M_{\Lambda,i_j}^*) 
\]
for $1\leq j \leq l$. Therefore, $\tau_{\Lambda}\clw_{\Lambda}= \bigcap\limits_{i_j \in \Lambda} \cln(M_{\Lambda, i_j}^*)$. Again for each $\bm q \in \mathbb{Z}_+^{n-l}$,
\[
\tau_{\Lambda} V_{I_n \backslash \Lambda}^{\bm q} \clw_{\Lambda}= (\tau_{\Lambda} V_{I_n \backslash \Lambda}^{\bm q}\tau_{\Lambda}^*) (\tau_{\Lambda}\clw_{\Lambda})=M_{I_n \backslash \Lambda}^{\bm q}\big(\bigcap\limits_{i_j \in \Lambda} \cln(M_{\Lambda, i_j}^*) \big).
\]
It follows that 
\begin{align}\label{Equation-Unitary equivalent on A-wandering space}
\tau_{\Lambda}(\cls_{\Lambda}(V))=  
\tau_{\Lambda}\big(\bigcap\limits_{\bm q \in \mathbb{Z}_+^{n -  l}}  V_{I_n \backslash \Lambda}^{\bm q} \clw_{\Lambda} \big)
=\bigcap\limits_{\bm q \in \mathbb{Z}_+^{n - l }}M_{I_n \backslash \Lambda}^{\bm q}\big(\bigcap\limits_{i_j \in \Lambda} \cln(M_{\Lambda, i_j}^*)\big)
=\cls_{\Lambda}(M_{\Lambda}).
\end{align}
From equation \eqref{Define unitary operator Pi }, we get $\tau_{\Lambda}(\cls_{\Lambda}(V))=\cls_{\Lambda}(V)$. Therefore, $\cls_{\Lambda}(V)=\cls_{\Lambda}(M_{\Lambda})$. Now, for each $i_k \in I_n \backslash \Lambda$ and $\xi \in \cls_{\Lambda}$, we have 
\[
M_{\Lambda,i_k} (\xi)=(\tau_{\Lambda} V_{i_k} \tau_{\Lambda}^*) (\xi)= \tau_{\Lambda} (V_{i_k} \xi) = V_{i_k}\xi
\]
as $\tau_{\Lambda}^*\xi =\xi$ and $V_{i_k} \xi \in \cls_{\Lambda}$. Hence 
$ M_{\Lambda,i_k}|_{\cls_{\Lambda}}= V_{i_k}|_{\cls_{\Lambda}}$.

We summarize the previous deliberation as follows:

\begin{prop}\label{Proposition-Analytic model}
Let $\Lambda \subseteq I_n$, and $\clh=\bigoplus\limits_{\Lambda \subseteq I_n} \clh_{\Lambda}$ be the {\textit{Wold decomposition}} for an $n$-tuple of isometries $V=(V_1,\ldots, V_n)$ with equal range on $\clh$. If $\cls_{\Lambda}(V)=\bigcap\limits_{\bm q \in \mathbb{Z}_+^{n -  |\Lambda|}}  V_{I_n \backslash \Lambda}^{\bm q} \clw_{\Lambda}$ is a non-zero subspace for $\clh_{\Lambda}$, then $V|_{\clh_\Lambda}=(V_1|_{\clh_{\Lambda}}, \dots, V_n|_{\clh_{\Lambda}})$ on $\clh_\Lambda$ is unitarily equivalent to $M_{\Lambda}= (M_{\Lambda,1},  \ldots, M_{{\Lambda},n}) $ on $H^2_{\cls_{\Lambda}}(\mathbb{D}^{|\Lambda|})$, where $M_{\Lambda, s} $'s are defined by equation \eqref{Analytical Representation}. If $I_n \backslash \Lambda= \{i_{l+1}< \dots <i_n\}$, then the wandering data for $M_{\Lambda}$ is expressed as 
\[
wd_{\Lambda}(M_{\Lambda})= (I|_{\cls_{\Lambda}}, V_{i_{l+1}}|_{\cls_{\Lambda}},\dots, V_{i_n}|_{\cls_{\Lambda}})
\] 
and the other wandering data for $M_{\Lambda}$ are zero tuples.
\end{prop}

We refer $M_{\Lambda}$ as the model operator corresponding to each $\Lambda  \subseteq I_n$. It is easy to see that the model operator $M_{\Lambda}= (M_{\Lambda,1},  \ldots, M_{{\Lambda},n})$ is an $n$-tuple of {\textit{isometries with equal range}} on $H^2_{\cls_{\Lambda}}(\mathbb{D}^{|\Lambda|})$. Our goal is now to formulate an analytic model for an $n$-tuple of isometries $V=(V_1,\ldots, V_n)$ with equal range on $\clh$. To do that, we will apply Proposition \ref{Proposition-Analytic model} to each $\clh_{\Lambda}$ for $\Lambda (\neq \emptyset) \subseteq I_n$ and then assemble all the pieces together. Indeed, we have the following result:

\begin{thm}
Let $(V_1, \ldots, V_n)$ be an $n$-tuple of {\textit{isometries with equal range}} on $\clh$. Then $(V_1,\ldots, V_n)$ is unitarily equivalent to $(M(V_1),\ldots, M(V_n))$ on $\bigoplus\limits_{\Lambda \subseteq I_n}H^2_{\cls_{\Lambda}}(\mathbb{D}^{|\Lambda|})$.
\end{thm}

\begin{proof}
Suppose $V=(V_1, \ldots, V_n)$ is an $n$-tuple of {\textit{isometries with equal range}} on $\clh$. Then Theorem \ref{Theorem Main Result} yields that there exist $2^n$ joint $V$-reducing orthogonal decomposition spaces $\clh_{\Lambda}$ such that $\clh= \bigoplus\limits_{\Lambda \subseteq I_n}\clh_{\Lambda}$. Moreover, 
\[
V_j= \bigoplus\limits_{\Lambda \subseteq I_n}V_j|_{\clh_{\Lambda}} \in \clb(\bigoplus\limits_{\Lambda \subseteq I_n}\clh_{\Lambda})
\]
for each $j \in I_n$. Now Proposition \ref{Proposition-Analytic model} asserts 
that for each $\Lambda (\neq \emptyset) \subseteq I_n$, $V|_{\clh_\Lambda}=(V_1|_{\clh_{\Lambda}}, \dots, V_n|_{\clh_{\Lambda}})$	 on $\clh_\Lambda$ is unitarily equivalent to $M_{\Lambda} =(M_{\Lambda,1}, \dots, M_{\Lambda,n})$ on $H^2_{\cls_{\Lambda}}(\mathbb{D}^{|\Lambda|})$ via the unitary map $\tau_{\Lambda}:\clh_\Lambda \rightarrow H^2_{\cls_{\Lambda}}(\mathbb{D}^{|\Lambda|})$, where $\cls_{\Lambda}$ is the $\Lambda$-wandering subspace for $V$.
Consider 
\[
M(V_j)=\bigoplus\limits_{\Lambda \subseteq I_n} M_{\Lambda,j} \in \clb\big(\bigoplus\limits_{\Lambda \subseteq I_n}H^2_{\cls_{\Lambda}}(\mathbb{D}^{|\Lambda|})\big)
\]	
for each $j \in I_n$. Define $\tau=\bigoplus\limits_{\Lambda \subseteq I_n} \tau_{\Lambda}$. Clearly, $\tau$ is a unitary operator from $\clh $ to 
$ \bigoplus\limits_{\Lambda \subseteq I_n}H^2_{\cls_{\Lambda}}(\mathbb{D}^{|\Lambda|})$. Therefore, for each $j \in I_n$, we have
\[
\tau V_j\tau^*=\big(\bigoplus\limits_{\Lambda \subseteq I_n} \tau_{\Lambda}\big)\big(\bigoplus\limits_{\Lambda \subseteq I_n}V_j|_{\clh_{\Lambda}} \big)\big(\bigoplus\limits_{\Lambda \subseteq I_n} \tau_{\Lambda}^*\big)
=\bigoplus\limits_{\Lambda \subseteq I_n} \tau_{\Lambda} {V_j}|_{\clh_{\Lambda}}\tau_{\Lambda}^*=\bigoplus_{\Lambda \subseteq I_n}M_{\Lambda,j}=M(V_j). 
\] 
Hence $(V_1,\dots, V_n)$ is unitarily equivalent to $(M(V_1),\dots, M(V_n))$ on $\bigoplus\limits_{\Lambda \subseteq I_n}H^2_{\cls_{\Lambda}}(\mathbb{D}^{|\Lambda|})$.  
\end{proof}

\section{Invariants}

In this section we show that the wandering data are completely unitary invariants for
$n$-tuples of {\textit{isometries with equal range}}.

\begin{thm}
Let $V=(V_1,\ldots,V_n)$ on $\clh$ and $T=(T_1, \ldots, T_n)$ on $\clk$ be two $n$-tuples of {\textit{isometries with equal range}}. Then the following are equivalent:
\begin{enumerate}
\item $(V_1, \ldots,V_n)$ is unitarily  equivalent to $ (T_1,\ldots, T_n)$. 
\item For all $\Lambda \subseteq I_n$, $(I|_{\cls_{\Lambda}(V)},V_{i_{l+1}}|_{\cls_{\Lambda}(V)},\ldots, V_{i_n}|_{\cls_{\Lambda}(V)})$ is unitarily  equivalent 
to \\ $(I|_{\cls_{\Lambda}(T)},
T_{i_{l+1}}|_{\cls_{\Lambda}(T)},\ldots, T_{i_n}|_{\cls_{\Lambda}(T)})$, 
where $i_k \in I_n \backslash \Lambda$ for $ k \in \{l+1,\dots,n\}$. 
\end{enumerate}
\end{thm}

\begin{proof}
$(1) \implies (2):$
Suppose that the tuples $V$ and $T$ are unitarily equivalent. Then there exists a unitary map $\Pi:\clh \rightarrow \clk$ such that $\Pi V_i  =T_i\Pi$ for $1 \leq i \leq n$. Now for each  $\Lambda=\{i_1<\dots < i_l \} \subseteq I_n$, we have 
\[
\Pi(\clw_{\Lambda}(V))=\bigcap\limits_{i_j \in \Lambda} \Pi\cln (V_{i_j}^*)= \bigcap\limits_{i_j \in \Lambda} \cln (T_{i_j}^*)= \clw_{\Lambda}(T)
\]
as $\Pi\cln (V_{i_j}^*) =\Pi\clh \ominus \Pi V_{i_j} \clh= \clk \ominus T_{i_j}\clk=\cln (T_{i_j}^*)$ for $ 1 \leq j \leq l$. Therefore,
\[
\Pi\cls_{\Lambda}(V)= \bigcap\limits_{\bm q \in \mathbb{Z}_+^{n-|\Lambda|}}
(\Pi V_{I_n \backslash \Lambda}^{\bm q} \Pi^*) \clw_{\Lambda}(T)
=\bigcap\limits_{\bm q \in \mathbb{Z}_+^{n-|\Lambda|}}
T_{I_n \backslash \Lambda}^{\bm q}  \clw_{\Lambda}(T)= \cls_{\Lambda}(T)
\] 
where $\cls_{\Lambda}(V)$ and $\cls_{\Lambda}(T)$ are $\Lambda$-wandering subspaces for the tuples $V$ and $T$, respectively. Thus $\Pi|_{\cls_{\Lambda}(V)}:\cls_{\Lambda}(V) \rightarrow \cls_{\Lambda}(T)$ is unitary. Again Lemma  \ref{Lemma- Reduce A wandering subspace } says that $\cls_{\Lambda}(V)$ reduces $V_{i_k}$ for $i_k \in I_n \backslash \Lambda= \{i_{1+1} <\dots < i_n \}$. Then,  for $i_k \in I_n \backslash \Lambda $, $\bm q \in \mathbb{Z}^{n- |\Lambda|}$ and $\eta \in \clw_{\Lambda}$, we obtain
\begin{align*}
\big(\Pi|_{\cls_{\Lambda}(V)} V_{i_k}|_{\cls_{\Lambda}(V)}\big) V_{I_n \backslash \Lambda}^{\bm q} \eta
= (\Pi V_{i_k}) V_{I_n \backslash \Lambda}^{\bm q} \eta
=(T_{i_k} \Pi)V_{I_n \backslash \Lambda}^{\bm q} \eta 
&=\big(T_{i_k}|_{\cls_{\Lambda}(T)}\Pi|_{\cls_{\Lambda}(V)}\big) V_{I_n \backslash \Lambda}^{\bm q} \eta 
\end{align*}
as $V_{I_n \backslash \Lambda}^{\bm q} \eta \in \cls_{\Lambda}(V)$. Hence there 
exists a unitary operator
$\Pi|_{\cls_{\Lambda}(V)}:\cls_{\Lambda}(V) \rightarrow \cls_{\Lambda}(T)$ such that  $\Pi|_{\cls_{\Lambda}(V)} V_{i_k}|_{\cls_{\Lambda}(V)}=T_{i_k}|_{\cls_{\Lambda}(T)}  \Pi|_{\cls_{\Lambda}(V)}$ for ${i_k} \in I_n \backslash \Lambda$. Thus $(I|_{\cls_{\Lambda}(V)},V_{i_{l+1}}|_{\cls_{\Lambda}(V)},\ldots, V_{i_n}|_{\cls_{\Lambda}(V)})$  is unitarily  equivalent to  $(I|_{\cls_{\Lambda}(T)},T_{i_{l+1}}|_{\cls_{\Lambda}(T)},\ldots, T_{i_n}|_{\cls_{\Lambda}(T)})$ for all $\Lambda \subseteq I_n$.

$(2) \implies (1):$
Assume that for each $\Lambda =\{i_1<\dots< i_l\}  \subseteq I_n$, there exists a unitary map $\pi_{\Lambda}: \cls_{\Lambda}(V) \rightarrow \cls_{\Lambda}(T)$ such that $\pi_{\Lambda}V_{i_k}|_{\cls_{\Lambda}(V)}=T_{i_k}|_{\cls_{\Lambda}(T)}\pi_{\Lambda}$ where $i_k \in I_n \backslash \Lambda$ for $ k \in \{l+1, \dots,n\}$. Now Theorem \ref{Theorem Main Result} asserts that $\clh=\bigoplus\limits_{\Lambda \subseteq I_n} \clh_{\Lambda}$ and $ \clk= \bigoplus\limits_{\Lambda \subseteq I_n} \clk_{\Lambda}$. Moreover, $\clh_{\Lambda}= \bigoplus\limits_{\bm p \in \mathbb{Z}_+^{l}} V_{\Lambda}^{\bm p}\cls_{\Lambda}(V)$ and $\clk_{\Lambda}= \bigoplus\limits_{\bm p \in \mathbb{Z}_+^{l}} V_{\Lambda}^{\bm p}\cls_{\Lambda}(T)$.  
	
For each $\Lambda  \subseteq I_n$, define a unitary operator $\Phi_{\Lambda}: \clh_{\Lambda} \rightarrow \clk_{\Lambda}$ by
\[
\Phi_{\Lambda}( V_{\Lambda}^{\bm p} U_{i_j}\xi) :=  T_{\Lambda}^{\bm p}(\widetilde{U}_{i_j} \pi_{\Lambda}\xi) \quad ( \bm p =(p_1, \ldots, p_l) \in \mathbb{Z}_+^l,\  \ \xi \in \cls_{\Lambda}(V))
\]
where the unitaries $U_{i_j} \in \clb(\clh), \widetilde{U}_{i_j} \in \clb(\clk)$ are defined as (see Proposition \ref{Remark-for unitary}, and Remark \ref{Remark-Reducingspace-unitary}) 
\begin{align}\label{Equation for Unitary $U_{i_j}$} 
U_{i_j}=
\begin{cases}
V_{\Lambda}^{*(\bm p +e_j)} V_{i_j}V_{\Lambda}^{\bm p}  &\text{if} \quad i_j \in \Lambda, \\
V_{\Lambda}^{*\bm p} V_{i_j}V_{\Lambda}^{\bm p}  &\text{if} \quad i_j \in I_n \backslash \Lambda, 
\end{cases}	 
\end{align}
and 
\begin{align} \label{Equation for Unitary $U_{i_k}$} 
\widetilde{U}_{i_j}=
\begin{cases}
T_{\Lambda}^{*(\bm p +e_j)} T_{i_j}T_{\Lambda}^{\bm p}  &\text{if} \quad i_j \in \Lambda, \\
T_{\Lambda}^{*\bm p} T_{i_j}T_{\Lambda}^{\bm p}  &\text{if} \quad i_j \in I_n \backslash \Lambda.
\end{cases}	 
\end{align}
In particular, if $i_j=i_1 \in \Lambda $, then $U_{i_j}=I_{\clh} $ and 
$\widetilde{U}_{i_j}=I_{\clk}$. Thus $\Phi_{\Lambda}(V_{\Lambda}^{\bm{p}} \xi)=T_{\Lambda}^{\bm p}(\pi_{\Lambda} \xi)$ for $\bm p \in \mathbb{Z}_+^l, \xi \in \cls_{\Lambda}(V)$.	 
Now for each $i_j \in \Lambda$, using equation \eqref{Equation for Unitary $U_{i_j}$}, 
we get
\begin{align*}
(\Phi_{\Lambda} V_{i_j}|_{\clh_{\Lambda}} )(V_{\Lambda}^{\bm{p}} \xi)
		&=\Phi_{\Lambda}( V_{i_j}V_{\Lambda}^{\bm{p}} \xi) \\
		&=\Phi_{\Lambda}( V_{\Lambda}^{\bm p+e_j} U_{i_j}\xi) \\
		&=T_{\Lambda}^{\bm p+e_j}(\widetilde{U}_{i_j}\pi_{\Lambda} \xi) \\
		&=T_{i_j}|_{\clk_{\Lambda}}T_{\Lambda}^{\bm p}(\pi_{\Lambda}\xi)\\
		&=T_{i_j}|_{\clk_{\Lambda}}\Phi_{\Lambda}( V_{\Lambda}^{\bm p} \xi).
\end{align*}
It follows that 
\[
\Phi_{\Lambda} V_{i_j}|_{\clh_{\Lambda}}\Phi_{\Lambda}^*= T_{i_j}|_{\clk_{\Lambda}}\quad  \text{for} \quad i_j \in \Lambda. 
\]
For $i_k \in I_n \backslash \Lambda$, using equation \eqref{Equation for Unitary $U_{i_k}$}, we obtain
\begin{align*}
(\Phi_{\Lambda} V_{i_k}|_{\clh_{\Lambda}} )(V_{\Lambda}^{\bm{p}} \xi)
&=\Phi_{\Lambda}(V_{\Lambda}^{\bm{p}}  U_{i_k}\xi)\\
&= T_{\Lambda}^{\bm p}(\widetilde{U}_{i_k}\pi_{\Lambda}  \xi)\\
&=T_{i_k}T_{\Lambda}^{\bm p} (\pi_{\Lambda}\xi)\\
&=T_{i_k}|_{\clk_{\Lambda}}\Phi_{\Lambda}(V_{\Lambda}^{\bm p} \xi).
\end{align*}
Therefore,
\[
\Phi_{\Lambda} V_{i_k}|_{\clh_{\Lambda}}\Phi_{\Lambda}^*= T_{i_k}|_{\clk_{\Lambda}} \quad \text{for} \quad i_k \in I_n \backslash \Lambda.
\]
Again Theorem \ref{Theorem Main Result} yields that $V_j=\bigoplus\limits_{\Lambda \subseteq I_n} V_j|_{\clh_{\Lambda}}$ and $T_j=\bigoplus\limits_{\Lambda \subseteq I_n} T_j|_{\clk_{\Lambda}}$ for each $j=1,2,\dots,n$. We now consider the unitary operator $\Phi=\bigoplus\limits_{\Lambda \subseteq I_n} \Phi_{\Lambda}: \clh \rightarrow \clk$. Then for each $j \in\{1,2,\dots,n\}$, 
\[
\Phi V_j \Phi^*=\big(\bigoplus\limits_{\Lambda \subseteq  I_n} \Phi_{\Lambda}\big)
\big(\bigoplus\limits_{\Lambda \subseteq I_n} V_j|_{\clh_{\Lambda}}\big)
\big(\bigoplus\limits_{\Lambda \in I_n} \Phi_{\Lambda}^*\big)
=\bigoplus\limits_{\Lambda \subseteq I_n} \Phi_{\Lambda} V_j|_{\clh_{\Lambda}} \Phi_{\Lambda}^*=\bigoplus\limits_{\Lambda \subseteq I_n}  T_j|_{\clk_{\Lambda}} 
=T_j.
\]
This completes the proof.
\end{proof}


\NI\textit{Acknowledgements:} 
The anonymous reviewer's critical and helpful comments and suggestions have greatly enhanced the paper's presentation, for which the authors are grateful.
The second author's research work is partially supported by a Faculty Initiation Grant (FIG scheme), IIT Roorkee (Ref. No: MAT/FIG/100820) and
the Mathematical Research Impact Centric Support (MATRICS) (MTR/2021/000695), 
SERB (DST), Government of India.
\vspace{0.2cm}

\NI\textit{Data availability:}
Data sharing is not applicable to this article as no data sets
were generated or analysed during the current study.
\vspace{0.2cm}

\NI\textit{Declarations}
\vspace{0.2cm}

\NI\textit{Conflict of interest:}
The authors have no competing interests to declare.

\end{document}